\theoremstyle{plain}
\numberwithin{equation}{section}
\newtheorem{theorem}[equation]{Theorem}
\newtheorem{maintheorem}{Theorem}
\newtheorem{corollary}[equation]{Corollary}
\newtheorem{lemma}[equation]{Lemma}
\newtheorem{proposition}[equation]{Proposition}
\newtheorem{conjecture}[equation]{Conjecture}
\newtheorem{definition}[equation]{Definition}
\newtheorem{definitions}[equation]{Definitions}
\newtheorem{remark}[equation]{Remark}
\newtheorem{remarks}[equation]{Remarks}
\newtheorem{analassump}[equation]{Analytic assumptions}
\newtheorem{naturassump}[equation]{Naturality assumptions}
\DeclareMathOperator{\modulo}{mod}
\DeclareMathOperator{\tr}{tr}
\DeclareMathOperator{\Hess}{Hess}
\DeclareMathOperator{\spec}{spec}
\DeclareMathOperator{\vol}{vol}
\DeclareMathOperator{\Id}{Id}
\DeclareMathOperator{\diverg}{div}
\DeclareMathOperator{\conf}{conf_{g_0}}
\DeclareMathOperator{\Diff}{Diff}
\DeclareMathOperator{\diff}{diff_{g_0}}
\DeclareMathOperator{\confdiff}{(conf+diff)_{g_0}}
\DeclareMathOperator{\confdiffperp}{(conf+diff)_{g_0}^\perp}
\DeclareMathOperator{\diffperp}{diff_{g_0}^\perp}
\DeclareMathOperator{\diffspin}{diff^{Spin}_{g_0}}
\DeclareMathOperator{\confdiffspin}{conf_{g_0}+diff^{Spin}_{g_0}}
\DeclareMathOperator{\projdiffperp}{\Pi_{diff^\perp}}
\DeclareMathOperator{\projconfdiffperp}{\Pi_{(conf+diff)^\perp}}
\newcommand{\fracsm}[2]{\begin{matrix}\frac{#1}{#2}\end{matrix}}
\newcommand{\prefix}[3]{\vphantom{#3}#1#2#3}
\newcommand{\beq}{\begin{equation}}
\newcommand{\eeq}{\end{equation}}
\newcommand{\Naturals}{\mathbb{N}}
\newcommand{\Reals}{\mathbb{R}}
\newcommand{\Complex}{\mathbb{C}}
\newcommand{\Real}{\mathrm{Re}\,}
\newcommand{\dalpha}{\partial^\alpha}   
\newcommand{\dbeta}{\partial^\beta}
\newcommand{\dgamma}{\partial^\gamma}
\newcommand{\ddelta}{\partial^\delta}
\newcommand{\dsubi}{\partial_i}
\newcommand{\dsubj}{\partial_j}\newcommand{\xij}{\xi_j}
\newcommand{\dsubk}{\partial_k}\newcommand{\xik}{\xi_k}
\newcommand{\dsubl}{\partial_l}
\newcommand{\dsubp}{\partial_p}\newcommand{\xip}{\xi_p}
\newcommand{\dsubq}{\partial_q}\newcommand{\xiq}{\xi_q}
\newcommand{\deltajk}{\delta_{jk}}
\newcommand{\deltaij}{\delta_{ij}}
\newcommand{\deltaik}{\delta_{ik}}
\newcommand{\deltajl}{\delta_{jl}}
\newcommand{\deltalj}{\delta_{jl}}
\newcommand{\deltakl}{\delta_{kl}}
\newcommand{\deltail}{\delta_{il}}
\newcommand{\xinorm}[1]{|\xi|^{#1}}  
\newcommand{\ijab}{^{ij}_{\alpha,\beta}}
\newcommand{\klgd}{^{kl}_{\gamma,\delta}}
\newcommand{\Cliff}{\mathrm{Cl}}          
\newcommand{\CliffC}{\Cliff^\Complex}
\newcommand{\PSpin}{\mathcal{P}_{\mathrm{Spin}}}
\newcommand{\Pin}[1]{\mathrm{Pin}(#1)}
\newcommand{\Spin}[1]{\mathrm{Spin}(#1)}
\newcommand{\Gl}[1]{\mathrm{Gl}(#1)}
\newcommand{\Oof}[1]{\mathrm{O}(#1)}
\newcommand{\SO}[1]{\mathrm{SO}(#1)}
\newcommand{\Dirac}{\slashed{\nabla}}
\newcommand{\Diracpresuffix}[2]{\prefix^{#1}{\Dirac}^{#2}}
\newcommand{\Cinf}[1]{C^\infty(#1)}          
\newcommand{\CinfStwoM}{C^\infty(S^2M)}
\newcommand{\CoTM}{T^*M}
\newcommand{\Endo}[1]{\mathrm{End}(#1)} 
\newcommand{\DIP}[2]{\langle\!\langle #1,#2\rangle\!\rangle}
\newcommand{\BigDIP}[2]{\Big\langle\!\!\Big\langle #1,#2\Big\rangle\!\!\Big\rangle}
\newcommand{\Metr}[1]{\mathrm{Metr}(#1)}
\begin{document}
\title[Extremal metrics for Dirac operators]{Extremal metrics for spectral functions of Dirac operators in even and odd dimensions}

\author{Niels Martin M\o{}ller}
\address{Niels Martin M\o{}ller, Department of Mathematics, Massachusetts Institute of Technology, Cambridge, MA 02139, USA.}
\email{moller@math.mit.edu}
\urladdr{http://math.mit.edu/~moller}
\thanks{This work was carried out at University of Aarhus, Denmark, and during visits at U. Penn. and Princeton University. The author was supported partially by his Elite Research Travel Grant 2007, Denmark, and would like to thank B. \O{}rsted for suggesting to study the determinant of the Dirac operator, as well as K. Okikiolu and several others for useful discussions.}
\date{August 2009}
\subjclass[2000]{58J52, 58J40}

\begin{abstract}
Let $(M^n,g)$ be a closed smooth Riemannian spin manifold and denote by $\Dirac$ its Atiyah-Singer-Dirac operator. We study the variation of Riemannian metrics for the zeta function and functional determinant of $\Dirac^2$, and prove finiteness of the Morse index at stationary metrics, and local extremality at such metrics under general, i.e. not only conformal, change of metrics.

In even dimensions, which is also a new case for the conformal Laplacian, the relevant stability operator is of log-polyhomogeneous pseudodifferential type, and we prove new results of independent interest, on the spectrum for such operators. We use this to prove local extremality under variation of the Riemannian metric, which in the important example when $(M^n,g)$ is the round $n$-sphere, gives a partial verification of Branson's conjecture on the pattern of extremals. Thus $\det\Dirac^2$ has a local (max, max, min, min) when the dimension is $(4k,4k+1,4k+2,4k+3)$, respectively.
\end{abstract}

\maketitle
\section{Introduction}
Fixing a closed smooth manifold $M$, the determinant of a natural geometric elliptic partial differential operator gives a
functional $g\mapsto\det P_g$ on the (infinite-dimensional) manifold of smooth Riemannian metrics
\[
\Metr{M}=\left\{g\in\Cinf{S^2TM}\:|\:g\:\textrm{is pos. def.}\right\},
\]
i.e. smooth symmetric positive definite 2-tensor fields on $M$, given by a certain regularization (i.e. renormalization) of the otherwise divergent product of the eigenvalues $\lambda_k$, where $|\lambda_0|\leq|\lambda_1|\leq\ldots\nearrow+\infty$ of the operator $P=P_g$ in each particular Riemannian metric $g$. More precisely, one studies the functional on the quotient
\beq\label{DetModuli}
\xymatrix {
F_P:\Metr{M}\Big/\mathrm{Diff}(M)\ar[r]&\Reals
},\quad F_P([g])=\det P_{[g]}
\eeq
The significance of this quotient is the identification of Riemannian metrics under the action of diffeomorphisms $\varphi:M\to M$ by
pullback $\varphi^*g$ of the metric $g$, since the spectrum of $P_g$ is invariant under this operation. Often the functional has additional invariance properties, e.g. the conformal invariance in many examples in theoretical physics, with differential operators such as the conformal Laplacian $L$ acting on scalar fields, and the Atiyah-Singer-Dirac operator $\Dirac$ on spinors. In quantum field and string theories, the determinant plays the rôle of an effective action (see \cite{DC}, \cite{Haw}) and hence the criticality and extremality properties of $\det P$ are both natural and of prime interest, and critical manifolds have interpretations in terms of the spacetime tracks of the evolution of strings and higher-dimensional objects.

From a purely geometric viewpoint, it is a fact that preciously little is currently known about the structure of the space of Riemannian metrics on a given smooth manifold $M$, and a key motivation here is the perspective use of spectral invariants for such investigations. Namely, in many previously considered cases the determinant is known to be extremal (within the conformal class) at special ``preferred'' metrics, such as constant curvature metrics, Einstein metrics, conformally flat metrics etc., depending on what the particular geometry allows. This was the case for the spheres in \cite{BrFuncDet}, \cite{Ok3} and \cite{On}, 2-dimensional surfaces in \cite{OPS1}, \cite{OPS2} and 4-manifolds in \cite{CY1}, the 4-dimensional boundary problem in \cite{CQ1} and \cite{CQ2}, where in each case the standard metric extremalizes the determinant of the natural conformally covariant operators considered there, viz. the conformal Laplacian and the Dirac operator. 

Note also, in connection with geometric flows for finding canonical metrics, that the gradient flow of the determinant of the Laplacian for 2-dimensional surfaces is equivalent to the Ricci flow (\cite{OPS1}),  and indeed leads to a proof of the classical uniformization theorem for Riemann surfaces (see e.g. \cite{CLT}). Functionals related to the determinant also played a rôle in Perelman's work (see Appendix B in \cite{CLN} for a detailed exposition, and see also \cite{MT}), and a related flow can be utilized to find hyperbolic metrics on knot complements.

It is important to recognize that the determinant in general is a very complicated global object, since its definition involves all the eigenvalues of the operator in question, and furthermore an analytic continuation procedure (or other essentially equivalent regularization technique). For instance for Riemann surfaces $(\Sigma^2,g)$, there are formulae for the determinant of the Laplacian in terms of the lengths of closed geodesics in $\Sigma^2$ (see \cite{Fr}, \cite{HP}, \cite{Sa} and \cite{PR}), illustrating the complicated global geometric nature of the invariant.

The well-known, yet remarkable, fact is that the variation under changes of metric of the determinant (in a precise sense) is a much more locally computable expression. This principle is the cornerstone of most of what is known about the determinant. For an arbitrary conformal variation, there exists in fact formulae for how the determinant changes, in terms of an integral of local invariants (i.e. curvatures and its covariant derivatives). These are the famous Polyakov-type formulas (e.g. \cite{Po}), which were historically utilized in the work by Osgood-Philips-Sarnak (e.g. \cite{OPS1}, \cite{OPS2}) on extremals of the determinant for 2-dimensional surfaces, and subsequently in the later results in these directions in \cite{BrFuncDet}, \cite{BO2}, \cite{CY1}, \cite{CQ1},\cite{CQ2}, in dimension 6 or lower. See also \cite{SZ} for recent results on perturbations of the sphere through singular metrics.

The approach with Polyakov formulas relies on explicit knowledge of heat invariants in terms of curvatures. To then obtain extremal results one applies sharp Beckner, Moser-Trudinger and Hardy-Littlewood-Sobolev-type inequalities to Polyakov formulas, after regrouping the curvature terms. From T. P. Branson's paper \cite{BrFuncDet} we have the following
theorem.
\begin{theorem}[Branson, 1993]\label{BransonsTheorem}
On $S^6$, for $g=e^{2\omega}g_{\rm{can}}$ in the conformal class of the
standard metric $g_{\rm{can}}$ having the standard volume, the determinant of
the Yamabe operator $L$ (resp. of the Dirac operator squared $\Dirac^2$) is maximized
(respectively minimized) exactly when the metric $g$ is the pullback
of $g_{\rm{can}}$ by some conformal diffeomorphism on $(S^6, g_{\rm{can}})$.
\end{theorem}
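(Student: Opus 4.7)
The plan is to exploit the Polyakov-type formula for conformally covariant operators in even dimensions, which converts the global quantity $\log(\det P_g / \det P_{g_{\rm can}})$ into a local integral depending only on the conformal factor $\omega$ and curvature invariants of the background metric. On the sphere, one then reduces this local functional to a fixed-sign multiple of a sharp Beckner-type functional inequality whose extremals are exactly the pullbacks of $g_{\rm can}$ by conformal diffeomorphisms.

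First, I would assemble the Polyakov formula on $S^6$ for each of $P \in \{L, \Dirac^2\}$: writing $g = e^{2\omega} g_{\rm can}$, one has
\[
\log \frac{\det P_g}{\det P_{g_{\rm can}}} = F_P[\omega],
\]
where $F_P[\omega]$ is an integral over $S^6$ of local invariants built from $\omega$, its derivatives, and curvatures of $g_{\rm can}$. In dimension $2n=6$ this formula (due to Branson--\O{}rsted \cite{BO2}) is governed by the conformal anomaly coefficient, i.e. the relevant heat coefficient $a_6(P_{g_{\rm can}})$, and its overall sign differs between the scalar operator $L$ and the spinorial operator $\Dirac^2$. Since $S^6$ is conformally flat, all Weyl-type contributions drop, and one is left with a combination of the Paneitz-type (sixth-order GJMS) operator $P_6$ acting on $\omega$ and lower-order curvature terms.

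Second, I would regroup these terms so that $F_P[\omega]$ takes the schematic form
\[
F_P[\omega] = \alpha_P \left\{ \tfrac{1}{2}\int_{S^6} \omega \, P_6 \omega \, dv_{g_{\rm can}} \;-\; c_6 \log \fint_{S^6} e^{6\omega} \, dv_{g_{\rm can}} \;+\; c_6 \cdot 6 \fint_{S^6} \omega \, dv_{g_{\rm can}} \right\},
\]
plus a fixed constant (coming from the topological integral of $Q_6$), with $\alpha_L > 0$ and $\alpha_{\Dirac^2} < 0$, reflecting the opposite signs of the conformal anomalies. The volume normalization kills the mean-value term, and the remaining expression in braces is precisely the Beckner functional on $S^6$. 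Beckner's sharp inequality \cite{BO2} then gives that this expression is $\leq 0$, with equality exactly when $e^{2\omega} g_{\rm can}$ is the pullback of $g_{\rm can}$ by a conformal diffeomorphism of $S^6$. Combined with the signs of $\alpha_P$, this delivers the claimed maximum for $\det L$ and minimum for $\det \Dirac^2$.

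The hard part, in my view, is twofold. The principal obstacle is the explicit derivation of the Polyakov formula for $\Dirac^2$ on $S^6$ with the correct coefficients: this requires the full sixth-order conformal index (heat coefficient $a_6$) for the square of the Dirac operator on the round sphere, together with a careful tracking of conformal weights under $g \mapsto e^{2\omega} g$, and is substantially more delicate than for $L$. The second difficulty is the algebraic regrouping needed to bring $F_P[\omega]$ into exact alignment with Beckner's functional; here the conformal flatness of $S^6$ is essential to eliminate stray Weyl terms, but the remaining combinatorics of $Q_6$, scalar curvature and derivative terms must cancel in just the right way. Once both are in hand, the equality characterization in Beckner's inequality, combined with the prescribed volume constraint, immediately identifies the critical metrics as pullbacks of $g_{\rm can}$ by the M\"obius group of $S^6$.
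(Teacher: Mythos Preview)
The paper does not contain a proof of this theorem. It is stated as background, attributed to Branson's 1993 monograph \cite{BrFuncDet}, and introduced with a one-sentence description of the method: ``The approach with Polyakov formulas relies on explicit knowledge of heat invariants in terms of curvatures. To then obtain extremal results one applies sharp Beckner, Moser-Trudinger and Hardy-Littlewood-Sobolev-type inequalities to Polyakov formulas, after regrouping the curvature terms.'' Your proposal is a faithful expansion of exactly that sentence, so there is no discrepancy to report between your approach and what the paper says; there is simply no proof in the paper to compare against.

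As a standalone sketch of Branson's argument, your outline is accurate in spirit. Two small corrections: the six-dimensional Polyakov formula for $L$ and $\Dirac^2$ is due to Branson \cite{BrFuncDet} rather than Branson--\O{}rsted \cite{BO2} (the latter is the four-dimensional case), and the sharp inequality you invoke on $S^6$ is Beckner's logarithmic Hardy--Littlewood--Sobolev/Onofri-type inequality, not something appearing in \cite{BO2}. Your identification of the ``hard part'' is also correct: the genuine work in Branson's proof is the explicit computation of the heat coefficient $a_6$ for $\Dirac^2$ and the algebraic regrouping into the Beckner functional, and your sketch does not attempt either --- so it is a correct plan but not a proof.
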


For readers not familiar with the intricacies of explicit Polyakov and $Q$-curvature formulae, it is worth pointing out that the local Riemannian invariants that need to be handled increase tremendously in combinatorial complexity, and that the analogue of Theorem \ref{BransonsTheorem} has not been proven for $n\geq 8$ (see \cite{GP} for relevant formulae for the case $n=8$).

In the light of this complicated nature, it is striking that one may obtain certain types of quite general information about the extremal properties of the determinant, even in high dimensions. Namely, a novel line of investigation (that dealt with second order geometric Laplace-type operators) was recently initiated by K. Okikiolu \cite{Ok3}
(and K. Richardson
\cite{Ri}). The extremals are now local in the metric, near a fixed ground metric, but with respect to any
variation, i.e. not restricting to only conformal variations of the
metric. The relevant object is the stability operator, i.e. the $L^2$-Hessian, of $F_P$, at the ground metric $g$.
\beq
\Hess{F_{P_g}(k,k)}:=D^2F_{P_g}=\frac{d^2}{dt^2}_{\big\vert t=0}F_{P_{g+tk}},\quad k\in\CinfStwoM.
\eeq
where $S^2M$ denotes the bundle of symmetric covariant 2-tensors. In the paper \cite{Ok3} from Ann. Math. (2001), and further elaborated in \cite{Ok4}), K. Okikiolu developed a calculus showing that such a stability operator (i.e. $L^2$-Hessian) may in many cases be understood properly in terms of a corresponding classical polyhomogeneous pseudodifferential operator $Q$ as follows.
\begin{equation}
\Hess{F_{P_g}(k,k)}=\DIP{Qk}{k}_g,
\end{equation}
where the inner product induced on sections by $(g,\langle\cdot,\cdot\rangle)$ is
\beq\label{DIPdef}
\DIP{\varphi}{\psi}_g=\int_M\langle\varphi,\psi\rangle_x \rm{dvol}(x),\quad\textrm{for}\quad \varphi,\psi\in\Cinf{E},
\eeq
denoting by $\rm{dvol}$ the Riemannian measure of $(M,g)$.

This analysis (in \cite{Ok3}) showed that for the determinant of the Yamabe operator (i.e. conformal Laplacian) at $(S^{2k+1}, g_{\rm{can}})$, by positivity of such a stability operator, the leading part of which is a locally determined object (i.e. in jets of the metric and the symbol of the partial differential operator), there holds local extremality near the round metric (in an appropriate topology such as a Banach topology on fixed Sobolev spaces, or the Fréchet topology). The maxima and minima are strict apart from in certain natural gauge-invariance directions. The Laplace-Beltrami however may have saddle points (though only finite-dimensional). These and other examples show that the conformal properties of the operators play an important rôle.

The following table summarizes all previously known results on conformally covariant operators in the case of the round sphere, together with the results of the present paper. Note that the $2k$-dimensional cases, for $k\geq2$, are only known to be true up to a finite codimension of exceptional directions, after taking the quotient with the gauge invariant directions as in Equation (\ref{DetModuli}). \\

\rule[8pt]{0pt}{0pt}
\begin{tabular}{l @{\quad} l @{\quad} l @{\quad} l}
\toprule
$S^n$ &
$\det L$ &
$\det\Dirac^2$ &
Fixed quantities \\
\midrule
$S^2$ & global max. & global min. & volume\\
$S^4$ & global min. & global max. & volume + conformal class\\
$S^6$ & global max. & global min. & volume + conformal class\\
$S^{4k}\:\quad(k\geq 2)$ & local min.$(\dagger)$ & local max.(*) & volume\\
$S^{4k+1}$ & local min. & local max.(*) & volume\\
$S^{4k+2}\: (k\geq 2)$ & local max.$(\dagger)$ & local min.(*) & volume\\
$S^{4k+3}$ & local max. & local min.(*) & volume\\
\hline
\multicolumn{2}{
>{\raggedright\footnotesize}p{5cm}}{
 $(\dagger)$: \cite{Ok5} + the present paper.\\$(*)$: the present paper.
}
\end{tabular}

The pattern seen in this table for $S^2$, $S^4$ and $S^6$ lead T. P. Branson to conjecture the following (where it should be noted that the
difference in behavior between the Yamabe and Dirac operator is not merely due to unnatural sign conventions).

\begin{conjecture}[Branson's conjecture, 1993, \cite{BrFuncDet}]
On $S^n$ for $n$ even, the pattern continues, i.e. in the
conformal class of the standard sphere, the quantities
$(-1)^{n/2}\det L$ and \mbox{$-(-1)^{n/2}\det\Dirac^2$} are minimized at the standard metric. The extremal metric is unique up to conformal diffeomorphism pullback.
\end{conjecture}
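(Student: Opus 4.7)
The plan is to adapt Okikiolu's stability-operator framework, extended here to log-polyhomogeneous pseudodifferential operators (which is what makes the even-dimensional case genuinely new), in order to obtain the \emph{local} part of Branson's conjecture; the global uniqueness statement up to conformal diffeomorphism lies outside what this method can achieve, so only a partial verification can realistically be claimed.

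First I restrict to the conformal class, writing $g=e^{2\omega}g_{\rm{can}}$ with $\omega$ normalized so that the volume is fixed, and use the conformal covariance of $\Dirac$ (and of $L$) to expand $\log\det\Dirac^2_{g_t}$ to second order along $g_t=e^{2t\omega}g_{\rm{can}}$; this identifies the $L^2$-Hessian with a quadratic form $\DIP{Q\omega}{\omega}_{g_{\rm{can}}}$ for a pseudodifferential operator $Q$ on $C^\infty(S^n)$. For odd $n$, $Q$ is classical polyhomogeneous and one follows the route of \cite{Ok3}. For even $n$, $Q$ carries a logarithmic term, and the prerequisite step is to establish self-adjointness and discreteness of the spectrum for log-polyhomogeneous operators of this type --- precisely the spectral-theoretic input that the abstract advertises as new.

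Next, on $(S^n,g_{\rm{can}})$ the $\SO{n+1}$-symmetry forces $Q$ to diagonalize on the spherical harmonics $Y_\ell$. Using the explicit eigenvalues $(\ell+n/2)^2$ of $\Dirac^2$ on $S^n$ with standard spinor multiplicities, one computes the eigenvalues $\mu_\ell$ of $Q$ via a Mellin/contour-integral manipulation of the zeta function of $\Dirac^2_{g_t}$ at $s=0$. Two gauge modes appear: $\mu_0=0$ from the fixed-volume constraint and $\mu_1=0$ from the conformal Killing directions (which are exactly the variations coming from pullback by conformal diffeomorphisms of $S^n$). For $\ell\geq 2$ one checks that $\sign\mu_\ell$ is constant in $\ell$ and matches the conjectural pattern: $(-1)^{n/2}\mu_\ell>0$ for $\det L$ and $-(-1)^{n/2}\mu_\ell>0$ for $\det\Dirac^2$. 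Supplementing this with the non-negativity of the full metric Hessian on transverse-traceless directions (via a Lichnerowicz/integration-by-parts argument) upgrades local extremality from the conformal class to arbitrary metric variations, which is the form of the result announced in the abstract.

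The principal obstacle is the global half of the conjecture. To pass from local to global extremality modulo conformal diffeomorphism, one would need a Polyakov-type formula in general even dimension $n$ together with a sharp Beckner or Hardy-Littlewood-Sobolev-type inequality for the corresponding higher-order GJMS operator on $S^n$; neither is known beyond $n=6$, and the reference to \cite{GP} in the introduction makes clear that the heat-invariant calculation already explodes combinatorially at $n=8$, so even granted such a formula the associated sharp inequality would have to be proven from scratch. A subsidiary but genuine obstacle internal to the local argument is verifying that the log-polyhomogeneous remainder in $Q$ does not cancel the sign of the leading symbol on the complement of the gauge directions $\ell=0,1$; this is precisely the point at which the paper's new spectral results for log-polyhomogeneous operators become indispensable.
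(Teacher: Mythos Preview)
The statement is a \emph{conjecture}; the paper does not prove it but establishes a partial local version (Theorem~\ref{DiracExtremals}: local extremality up to a finite-dimensional exceptional subspace $V$). You correctly flag that only a partial verification is realistic, but your proposed route to that partial result is quite different from the paper's, and has gaps of its own.

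The paper never restricts to the conformal class or to the sphere. It works on a general closed spin manifold with an arbitrary metric variation $k\in\CinfStwoM$, computes the leading symbol of the pseudodifferential Hessian acting on symmetric $2$-tensors via the Bourguignon--Gauduchon formula (Theorem~\ref{DiracHess}), factorizes off the gauge projections $\projdiffperp$ or $\projconfdiffperp$ (Proposition~\ref{factorizing}), and shows the remaining operator has positive log-polyhomogeneous leading symbol and hence semi-bounded spectrum (Theorem~\ref{spectrum}, Proposition~\ref{hypoandpositive}). No individual eigenvalue of the Hessian is ever computed; the sphere appears only as an instance where the critical-point hypothesis is satisfied (Proposition~\ref{stationarySpheres}), and the exceptional subspace $V$ is not shown to vanish in this paper.

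Your plan instead specializes to $S^n$, restricts first to conformal variations so the Hessian acts on scalars, diagonalizes on spherical harmonics, and proposes to compute each $\mu_\ell$ and verify its sign. If carried out this would yield more than the paper proves (essentially $V=\{0\}$ within the conformal class), and is closer in spirit to the follow-up \cite{MoO} than to the present paper. But two steps are only asserted: the explicit evaluation of $\mu_\ell$ from the spinor spectrum (the Hessian of $\zeta'(0)$ is not a simple rational function of $\ell$, and the ``Mellin/contour-integral manipulation'' is where all the difficulty hides), and the extension to transverse-traceless directions via a ``Lichnerowicz/integration-by-parts argument'', which is not how the paper treats non-conformal variations at all---that is precisely what the Bourguignon--Gauduchon machinery and the symbol calculus on $S^2M$ are for.
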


With the previous state of knowledge about the extremal properties of the determinant, it thus seemed remarkably fitting to investigate the determinant of the Atiyah-Singer-Dirac operator. As hinted at in the introduction, Dirac operators are certainly important objects in both modern theoretical physics and pure geometry. The present paper serves several purposes: (1) to extend the stability operator (i.e. $L^2$-Hessian) calculus for the extremal problems of spectral invariants to the geometrically much more involved case of Dirac operators, and the independent issue: (2) to give a rigorous spectral theory for stability operators in variational problems suitable for treating zeta regularized quantities in even dimensions, which involves a more broad class of pseudodifferential operators, namely having a log-polyhomogeneous leading symbol. This is also a new case already for the conformal Laplacian, when the dimension of the manifold is even, and the spectral results proven here concludes the proof of $(\dagger)$ in the above table, using the formulas for the leading symbols of the ordinary Laplacian in \cite{Ok3} and \cite{OW}, together with variation formulae for the scalar curvature.

Furthermore we describe as an aside: (3) an approach to breaking gauge invariance of the variational problem, in order to apply elliptic theory, by noting that each of the relevant operators has a factorization (up to a smoothing operator) into a product of one truly elliptic operator and two projections that project onto the gauge invariance subspace of the smooth symmetric 2-tensor fields (as defined by the usual differential conditions).

Taken together this leads to the affirmative answer to Branson's conjecture, with a certain amount of liberty in its interpretation; viz. by proving the new extremal results marked $(\dagger)$ and $(*)$ in the above table (see Theorem \ref{DiracExtremals}, where again this is in the sense of extremals in ``almost every direction'', or more precisely that there is possibly a finite co-dimension of directions, up to gauge equivalence, in which the extremality does not hold.

Finally, the present paper is part of a larger program of understanding extremal properties of determinants in general, and it is an important prerequisite for the later joint work with B. \O{}rsted \cite{MoO}, where the complete local variations characterization of the extremals for the Dirac operator on the round spheres $(S^n,g_\textrm{can})$ becomes a corollary to the present paper, when applying to it the strong rigidity theorem for conformal functionals on $(S^n,g_\textrm{can})$ proven in \cite{MoO}.

\subsection{Proof summary and statement of the results}\label{SummaryAndResults}
Let $(M,g)$ be a closed (i.e. compact, $\partial
M=\emptyset$) smooth Riemannian $n$-dimensional manifold and $(E,\langle\cdot,\cdot\rangle)$ a
Hermitian vector bundle over $M$ of rank $r$. We shall assume that $E$ is a
\emph{tensor-spinor bundle}, i.e. if $H$ is either $\Oof{n}$, $\SO{n}$
or $\Spin{n}$, corresponding to the geometry under investigation, then
\[
E=\mathcal{F}_HM\times_\rho V
\]
is a bundle associated to the principal bundle of $H$-frames by some
finite-dimensional representation $\rho$ on $V$.

Consider partial differential operators
\[
P:\Cinf{E}\to\Cinf{E}
\]
of order $d\in 2\Naturals_{+}$, acting on smooth sections and satisfying the following.
\begin{analassump}\label{analassump}
\begin{itemize}
\item[]
\item[(1)] P has pointwise positive definite leading symbol $\sigma_d(x,\xi)\in\Endo{E_x}$
\item[(2)] P is formally positive (in particular formally self-adjoint) on
  $\Cinf{E}$, i.e.
\[
\DIP{P\varphi}{\psi}_g\geq 0, \quad\forall \varphi,\psi\in\Cinf{E}
\]
\end{itemize}
\end{analassump}
\noindent{}It is a classical fact that both the ordinary Laplacian and
the square of the Dirac operator satisfy this. Under the assumptions, $P$ is elliptic and has discrete
spectrum with eigenvalues
\[
0\leq\lambda_0\leq\lambda_1\leq\cdots\nearrow\infty
\]
of finite multiplicity and satisfying Weyl's law, which ensures
convergence in the following definition.
\begin{definition}
The zeta function of $P$ in the metric $g$ is
\begin{equation}
\zeta_P(s)=\zeta(P,s)=\sum_{\lambda_j>0}\lambda_j^{-s},\quad\Real\; s>n/d,
\end{equation}
with repetition according to multiplicities.
\end{definition}
Standard theory (see e.g. \cite{BO1}) gives the meromorphic structure of $\zeta$ as
recorded in the next theorem.
\begin{theorem}\label{mero}
$\zeta_P(\cdot)$ has a meromorphic continuation to $\Complex$ having only
simple poles with
\begin{equation}
\Big\{\mathrm{poles\;of}\;\zeta_P(\cdot)\Big\}\subseteq\bigg\{\frac{n}{d},\frac{n-1}{d},\ldots\bigg\}
\end{equation}
and being regular at $s=0$. If $n$ is odd-dimensional, then
\begin{equation}
\zeta_P(0)=-\dim{\ker{P}}.
\end{equation}
\end{theorem}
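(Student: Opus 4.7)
The plan is the standard Mellin-transform-plus-heat-kernel approach. By Assumption \ref{analassump} the Friedrichs extension of $P$ is positive self-adjoint with discrete spectrum, and by Weyl's law $K(t):=\Tr(e^{-tP})=\sum_j e^{-t\lambda_j}$ converges absolutely for every $t>0$. For $\Real s > n/d$, the identity $\Gamma(s)\lambda^{-s}=\int_0^\infty t^{s-1}e^{-t\lambda}\,dt$ gives the Mellin representation
\[
\zeta_P(s)\;=\;\frac{1}{\Gamma(s)}\int_0^\infty t^{s-1}\bigl(K(t)-\dim\ker P\bigr)\,dt.
\]
I would split this integral at $t=1$, analyze each piece, and use the prefactor $1/\Gamma(s)$ to cancel the unwanted poles.

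The tail $\int_1^\infty t^{s-1}(K(t)-\dim\ker P)\,dt$ is entire in $s$ because $K(t)-\dim\ker P$ decays exponentially, controlled by the first positive eigenvalue. For the small-time piece, invoke the classical short-time asymptotic expansion of the heat trace of an elliptic positive PDO of order $d$ on a closed $n$-manifold,
\[
K(t)\;\sim\;\sum_{k=0}^\infty a_k\,t^{(k-n)/d}\qquad (t\to 0^+),
\]
with $a_k=\int_M u_k(x,P)\,\mathrm{dvol}(x)$ a universal local invariant polynomial in the jets of the symbol of $P$. Truncating at order $N$ and integrating termwise gives
\[
\int_0^1 t^{s-1}K(t)\,dt\;=\;\sum_{k<N}\frac{a_k}{s+(k-n)/d}\;+\;(\text{analytic for }\Real s > -(N-n)/d),
\]
and the explicit constant $-\dim\ker P$ contributes a $-\dim\ker P/s$ term. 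Letting $N\to\infty$ meromorphically extends the bracket to $\Complex$ with only simple poles, located in $\{n/d,(n-1)/d,(n-2)/d,\ldots\}$.

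Multiplication by the entire function $1/\Gamma(s)$, which has simple zeros precisely at $s=0,-1,-2,\ldots$, then cancels every bracket-pole sitting at a non-positive integer. This yields the meromorphic continuation with simple poles contained in $\{n/d,(n-1)/d,\ldots\}$ and in particular regularity at $s=0$. Collecting the two $s=0$ bracket-residues ($a_n$ from $k=n$, and $-\dim\ker P$) and using $1/\Gamma(s)=s+O(s^2)$ near $s=0$ gives the general value formula
\[
\zeta_P(0)\;=\;a_n-\dim\ker P.
\]

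To finish the odd-dimensional statement I need $a_n=0$ when $n$ is odd. This is the classical parity fact for heat invariants of natural (spin-)Riemannian differential operators of even order: the local invariant $u_k(x,P)$ vanishes whenever $k$ is odd, as one sees by tracking the scaling and reflection parity carried by the jets of the metric (and Clifford data) through the Seeley resolvent-symbol construction of $u_k$. This parity step is the only substantive obstacle; the Mellin/heat machinery above is purely formal once the heat expansion is granted. For the operators of interest here (Laplace-type operators and $\Dirac^2$) the vanishing can simply be quoted from standard references such as \cite{BO1}.
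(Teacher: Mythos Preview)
The paper does not give its own proof of this theorem; it is stated as a standard fact with the parenthetical ``Standard theory (see e.g.\ \cite{BO1}) gives the meromorphic structure of $\zeta$'' and no further argument. Your Mellin--heat-trace sketch is exactly the standard proof underlying such citations, and you have correctly isolated the one nontrivial input (the parity vanishing $a_n=0$ for $n$ odd) and cited it appropriately, so there is nothing substantive to compare.
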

\noindent{}In the light of this, it is convenient to define the modified zeta function
\begin{equation}\label{modifiedzeta}
\mathcal{Z}(P,s)=\frac{\Gamma(s)\zeta_P(s)}{\Gamma(s-n/2)}+\frac{\dim{\ker{P}}}{s\Gamma(s-n/2)},
\end{equation}
which by Theorem \ref{mero} is entire in $s$.

Also by the same theorem the following makes sense
\begin{definition} The zeta functional determinant of $P_g$ is the real number
\[
\det{P_g}=\exp\big(-\zeta_P'(0)\big).
\]
\end{definition}
We will also require operators to be geometric, for instance in the
sense of Branson-\O{}rsted and others (see for instance \cite{BO1}).
\begin{naturassump}\label{naturassump}
$P$ is assumed to be natural as 'a rule' assigning to each
metric the operator $P_g$, being a universal polynomial in tensor products and contractions of
\begin{itemize}
\item[(1)] the metric $g$, its inverse $g^{-1}$, the Levi-Civita connection
  $\nabla$ and the curvature tensor $R$.
\item[(2)] the volume form $\vol$, if the structure group is $H=\SO{n}$.
\item[(3)] the volume form and Clifford section $\gamma$, if $H=\Spin{n}$.
\end{itemize}
\end{naturassump}
\noindent{}Differential operators $P$ will in the following be assumed to satisfy the
analytic and naturality conditions, unless otherwise stated.

Using K. Okikiolu's methods (\cite{Ok4}, \cite{Ok3}), we study the stability operator (i.e. $L^2$-Hessian) of the modified zeta function $\mathcal{Z}(s)$ which has a meromorphic family of Hessians $\Psi$DOs denoted $T_s$, for $s\in\Omega\subseteq\Complex$, with decompositions $T_s=U_s+V_s$ into pairs of $\Psi$DOs with orders $n-2s$ and $2$, respectively. The main result following from this analysis is the following.

\begin{maintheorem}\label{DiracHess}
Let $(M^n,\gamma)$ be a closed Riemannian spin manifold. Assume that the kernel of the Atiyah-Singer-Dirac operator $\Dirac$ has stable
dimension under local variations of the metric, with fixed topological
spin structure.
\newline\indent{}Then the leading symbol of the part $U_s$ as above, of the pseudodifferential stability operator of the modified zeta function $\mathcal{Z}(s)$ of $\Dirac^2$ is given by
\begin{align*}
&\langle
k,u_s(x,\xi)\,k\rangle_g=\\
&2^{\lfloor\frac{n}{2}\rfloor-2}\Big(\frac{1}{4\pi}\Big)^{\fracsm{n}{2}}\frac{\Gamma(-S+1)^2}{\Gamma(-2S+2)}|\xi|^{n-2s}
\Bigg\{\Big[2s-(n-1)\Big]\tr\big(K_g\Pi^{\bot}_\xi\big)^2
+\big(\tr{K_g\Pi^{\bot}_\xi\big)^2}\Bigg\}
\end{align*}
for $\Real{s}<n/2-1$. Here $S=s-n/2$, and $\Pi^{\bot}_\xi$ is the orthogonal projection on ${\xi}^{\perp}$, for $\xi\in T_x^{*}M$.
\end{maintheorem}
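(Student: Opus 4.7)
The plan is to apply Okikiolu's complex-power / contour-integral calculus of \cite{Ok3, Ok4} to $P=\Dirac^2$, to identify within the $L^2$-Hessian $T_s=U_s+V_s$ the part $U_s$ of pseudodifferential order $n-2s$, and to compute its leading symbol explicitly by combining a variational formula for the Dirac operator under change of metric with a Clifford-algebra trace computation.

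\textbf{Contour representation and splitting.} Starting from
\[
P^{-s}\;=\;\frac{-1}{2\pi i}\oint_\Gamma \lambda^{-s}(P-\lambda)^{-1}\,d\lambda,
\]
differentiating $\Tr(P^{-s})$ twice along $g+tk$ produces two types of contributions to the $L^2$-Hessian: a ``linear in $\ddot P$'' piece, and a ``quadratic in $\dot P$'' piece of the form $\Tr\bigl(\dot P(P-\lambda)^{-1}\dot P(P-\lambda)^{-1}\bigr)$ integrated against $\lambda^{-s}$. The modification in (\ref{modifiedzeta}) only rearranges the meromorphic data and leaves this structure intact. Rewritten as $\DIP{T_s k}{k}_g$, the first piece is a second-order differential operator in $k$ and therefore contributes only to $V_s$, while all of the order $n-2s$ lives in the second piece and supplies $U_s$.

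\textbf{Dirac variation and leading symbol.} To make $\dot P$ explicit one first identifies the spinor bundles of $g$ and $g+tk$ via the Bourguignon--Gauduchon parallel transport along the $g$-geodesic in the space of positive-definite symmetric 2-tensors, so that $\Dirac_{g+tk}$ is realized as a family on a fixed spinor bundle. Differentiating at $t=0$, the first variation of Clifford multiplication acquires the schematic form
\[
\dot c(\xi)\;=\;-\tfrac12\,c\bigl(\Pi^{\bot}_\xi K_g\xi\bigr)\;+\;(\text{scalar in }k,\xi)\cdot c(\xi),
\]
and via $\dot{\Dirac^2}=\{\Dirac,\dot\Dirac\}$ this yields a principal symbol of $\dot P$ in which the genuinely Dirac-theoretic information is packaged as a Clifford bracket $\{c(\xi),\dot c(\xi)\}$ naturally projected by $\Pi^{\bot}_\xi$. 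Substituting into the quadratic-in-$\dot P$ term and evaluating the $\lambda$-contour integral at symbol level produces the Beta-function factor $\Gamma(-S+1)^2/\Gamma(-2S+2)$ (with $S=s-n/2$) and the $|\xi|^{n-2s}$ scaling. The remaining pointwise spinor trace is then evaluated using $\tr\bigl(c(X)c(Y)\bigr)=-2^{\lfloor n/2\rfloor}g(X,Y)$ and the analogous four-Clifford trace identity, and after grouping by type in $k$ the only surviving quadratic forms are $\tr(K_g\Pi^{\bot}_\xi)^2$ and $(\tr K_g\Pi^{\bot}_\xi)^2$. Using $\tr\Pi^{\bot}_\xi=n-1$ then collapses the mixed expression into the coefficients $[2s-(n-1)]$ and $1$ that appear in the theorem, with overall combinatorial prefactor $2^{\lfloor n/2\rfloor-2}(4\pi)^{-n/2}$.

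\textbf{Main obstacle.} The real difficulty is keeping simultaneous control of three moving parts: the Bourguignon--Gauduchon compensation for the metric-dependence of the spinor bundle, the complete Clifford-algebra trace reduction at fourth order in $c(\xi)$ and $K_g$, and the meromorphic continuation in $s$, so that both the correct pole/zero structure (forcing the hypothesis $\Real s<n/2-1$) and the exact numerical coefficient $2s-(n-1)$ (rather than some other affine function of $s$) emerge. A single Gamma-shift or sign inconsistency between these three layers would spoil that coefficient, which is why this combinatorial bookkeeping is the crux.
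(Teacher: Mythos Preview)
Your high-level strategy matches the paper's: both use Okikiolu's Hessian calculus for $P=\Dirac^2$, the Bourguignon--Gauduchon identification of spinor bundles to compute $\dot P$, and Clifford-trace identities to close the computation. But your proposal skips the step that makes the paper's proof actually go through, and in doing so mischaracterizes what needs to be computed.

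The paper does \emph{not} work directly from the contour integral and the principal symbol of $\dot P$. Instead it invokes Okikiolu's Theorem (Theorem~\ref{OkHessTHM1} here, from \cite{Ok4}), which gives a universal closed formula for $u_s(x,\xi)$ in terms of \emph{all} the coefficients $A^{ij}_{\alpha,\beta}$ appearing in the local expansion
\[
P' \;=\; \sum_{\alpha,\beta,i,j} A^{ij}_{\alpha,\beta}\,(\partial^\alpha k_{ij})\,\partial^\beta,
\]
with $|\alpha|+|\beta|=2$. This means one needs the full degree-two part of $(\Dirac^2)'$, not just its principal symbol: the terms with one derivative on $k$ and one spinorial derivative, and the terms with two derivatives on $k$, all contribute to the leading symbol $u_s$. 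Your schematic $\dot c(\xi)=-\tfrac12 c(\Pi^\perp_\xi K_g\xi)+\cdots$ captures only the $|\beta|=2$ piece and would miss the $A^{ij}_{kl}$ and $B^{ij}_{kl}$ contributions (in the paper's notation), which are precisely where the divergence and trace terms from the Bourguignon--Gauduchon formula enter. Without them you cannot recover the correct coefficient $2s-(n-1)$.

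Concretely, the paper computes $(\Dirac^2)'$ in normal coordinates, extracts three endomorphism-valued coefficient arrays $D,A,B$ built from Clifford products, forms the auxiliary symbols $\sigma^{(0)},\sigma^{(1)},\sigma^{(2)}$, and then plugs into the Okikiolu--Wang decomposition $u_s=C(s)\bigl(u_s^{(1)}+u_s^{(2)}+u_s^{(3)}+u_s^{(4)}\bigr)$. Each $u_s^{(k)}$ is evaluated via the two- and four-fold Clifford trace identities (Proposition~\ref{CliffTrace}); notably $u_s^{(1)}$ vanishes identically, and the remaining three combine to the claimed formula. Your proposal's ``grouping by type in $k$'' and invocation of $\tr\Pi^\perp_\xi=n-1$ is the right endgame, but the organizing framework that gets you there---Okikiolu's explicit $u_s(\partial^\alpha,\partial^\beta,\partial^\gamma,\partial^\delta,x,\xi)$ table---is absent, and rederiving it from the contour integral is a substantial paper in its own right (namely \cite{Ok4}).
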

\begin{remarks}
We point the reader to the paper \cite{Mol2} for an application, which also serves as a simple-minded consistency check of the correctness of Theorem \ref{DiracHess}.
\end{remarks}

It is worth pointing out that the previous investigation of higher rank vector bundle cases, namely Hodge and Bochner Laplacians on $p$-forms in \cite{OW} led to somewhat complicated expressions for the stability operator. These operators lacked good conformal properties, and working instead with the square of the conformally covariant Atiyah-Singer-Dirac operator indeed reveals a very appealing formula.

As prerequisites for proving Theorem \ref{DiracHess}, we describe in Section \ref{Section:Diracoperators} the basics of spin geometry and the non-trivial problem of metric variations of the Dirac operator in a general direction, i.e. not necessarily preserving the conformal class. The situation is more complicated than that for the ordinary Laplacians that act on sections of fixed, metric independent bundles,
such as functions or differential forms. Interestingly there seems to be the misconception that only conformal changes are at all manageable for the spin case. This should seemingly be attributed to the fact that a \emph{fixed} principal bundle may be used in the conformal case only, and to the fact that the spin representations do not come from representations of the universal double cover of $\Gl{n}$, but only from that of $\SO{n}$, corresponding to \emph{after} a Riemannian metric has been chosen. It is indeed true that for a general change of metric, the underlying bundles have to change. However being in Riemannian signature, and for the purpose of spectral geometry, this can be handled (following \cite{BG}) via families of gauge transformations known as the Bourguignon-Gauduchon isomorphisms. One thus obtains a new family of partial differential operators
\begin{equation}\label{isofamily}
\Diracpresuffix{\gamma}{\gamma_t}:C^\infty(\Sigma_\gamma)\to C^\infty(\Sigma_\gamma),\quad t\in (-\varepsilon,\varepsilon),
\end{equation}
in the \emph{fixed} spinor bundle (for the metric $g$, together with the spin structure denoted a ``spin metric'' $\gamma$), and each of which is isospectral to the Dirac operator in the metric $g+tk$, for $k\in S^2M$. For readers that might not be familiar with these subjects, is included a review of the Bourguignon-Gauduchon paper \cite{BG} in Appendix \ref{BGAppendix}.

The proof of Theorem \ref{DiracHess} is carried out in Section \ref{hessiansDirac} by generalizing K. Okikiolu's calculus to the spinor case (Corollary \ref{OkikioluForDirac}), and by calculating the explicit leading symbol of the stability operator (i.e. $L^2$-Hessian) in local coordinates, by applying the Bourguignon-Gauduchon formula from \cite{BG} for the infinitesimal variation of the operator family in (\ref{isofamily})
\beq\label{BGVarIntro}
\bigg({\frac{d}{dt}\Diracpresuffix{\gamma}{\gamma_t}}_{\big\vert
    t=0}\bigg)\psi=-\frac{1}{2}\sum_{i=1}^{n}e_{i\cdot\gamma}\widetilde{\nabla}_{K_{g}(e_i)}^{\gamma}\psi+\frac{1}{4}\big[d(\tr_g k)-\diverg_g
    k\big]_{\cdot\gamma}\psi.
\eeq
The final step in the proof of Theorem \ref{DiracHess} is the use of certain formulae for the trace of endomorphisms induced from Clifford multiplication (Proposition \ref{CliffTrace}).

As an outline of the arguments needed for the proof of the main result on extremals for the determinant (i.e. Theorem \ref{DiracExtremals}), we now explain the application of Theorem \ref{DiracHess} to obtain the extremals in a simpler case, being that of the value $\zeta_{\Dirac^2}(0)$.

\begin{corollary}\label{ZetaItself} 
Assume that the ground metric $g_0$ is a stationary point for $\zeta_{\Dirac^2}(0)$. Under assumptions as in Theorem \ref{DiracExtremals}, then $\zeta_{\Dirac^2}(0)$ has local maximum for $(-1)^k\zeta_{\Dirac^2}(0)$ at $g_0$, apart from possibly in $V(M,g_0)+\confdiff$, for a finite dimensional subspace of directions $V(M,g_0\subseteq\CinfStwoM$.
\end{corollary}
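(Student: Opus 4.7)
The plan is to express $\zeta_{\Dirac^2}(0)$ in terms of the entire modified zeta function $\mathcal{Z}(\Dirac^2,0)$ from (\ref{modifiedzeta}), and then apply Theorem \ref{DiracHess} at $s=0$. If $n$ is odd, Theorem \ref{mero} gives $\zeta_{\Dirac^2}(0) = -\dim\ker\Dirac$, which is locally constant under the kernel-stability hypothesis, so the statement is vacuous; assume therefore that $n = 2k$. A Laurent expansion at $s=0$, using the simple pole $\Gamma(s-k) = \tfrac{(-1)^k/k!}{s} + O(1)$ and the fact that $\Gamma(s)\zeta_{\Dirac^2}(s) + \dim\ker\Dirac/s = \tfrac{\zeta_{\Dirac^2}(0) + \dim\ker\Dirac}{s} + O(1)$, yields
\[
\mathcal{Z}(\Dirac^2,0) \;=\; (-1)^k\,k!\,\big(\zeta_{\Dirac^2}(0) + \dim\ker\Dirac\big).
\]
Since $\dim\ker\Dirac$ is locally constant by hypothesis, $\Hess\,\zeta_{\Dirac^2}(0) = \tfrac{(-1)^k}{k!}\,\Hess\,\mathcal{Z}(\Dirac^2,0)$, and $(-1)^k\zeta_{\Dirac^2}(0)$ attains a local maximum iff $\mathcal{Z}(\Dirac^2,0)$ does.

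Next I would substitute $s=0$ (admissible since $n>2$, so $\Real s < n/2 - 1$) into the leading-symbol formula of Theorem \ref{DiracHess}. The prefactor $2^{\lfloor n/2\rfloor - 2}(4\pi)^{-n/2}\Gamma(n/2+1)^2/\Gamma(n+2)$ is strictly positive, so up to a positive constant the leading symbol reduces to
\[
|\xi|^n\Big\{-(n-1)\,\tr\big(K_g\Pi_\xi^\perp\big)^2 + \big(\tr K_g\Pi_\xi^\perp\big)^2\Big\}.
\]
Cyclicity of trace together with $(\Pi_\xi^\perp)^2 = \Pi_\xi^\perp$ identifies the bracket with $(\sum_i a_i)^2 - (n-1)\sum_i a_i^2$, where $a_1,\dots,a_{n-1}$ are the eigenvalues of the symmetric operator $\Pi_\xi^\perp K_g \Pi_\xi^\perp$ on $\xi^\perp$. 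By Cauchy--Schwarz this is $\leq 0$, with equality iff all $a_i$ coincide; varying $\xi$ pointwise over $T^*_xM$ forces the equality case to $K_g = f g$ for some scalar $f$, i.e.\ a purely conformal direction. Combined with the diffeomorphism invariance of the spectrum, which annihilates the Hessian on Lie-derivative directions, the kernel of the leading symbol of $U_0$ therefore contains $\confdiff$.

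The main obstacle is upgrading non-positivity of the leading symbol to strict negativity of the full Hessian $T_0 = U_0 + V_0$ modulo a finite-dimensional subspace. Since $n$ is even and $s=0$, $T_0$ is log-polyhomogeneous rather than classical polyhomogeneous, and one must invoke the spectral theory for such $\Psi$DOs developed earlier in the paper (the ``new case'' flagged in the introduction). Restricting $T_0$ to $\confdiffperp$, where by the preceding paragraph the leading symbol is pointwise strictly negative away from the zero section, an elliptic G{\aa}rding-type estimate for log-polyhomogeneous operators shows that $T_0$ is self-adjoint and bounded above with only finitely many non-negative eigenvalues; their span is the claimed subspace $V(M,g_0)\subseteq\CinfStwoM$. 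On the orthogonal of $V(M,g_0) + \confdiff$ the Hessian is strictly negative, giving the desired strict local maximum of $\mathcal{Z}(\Dirac^2,0)$ and therefore of $(-1)^k\zeta_{\Dirac^2}(0)$.
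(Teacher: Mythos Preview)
Your overall strategy is the same as the paper's, but there is one conceptual error and one genuine gap in rigor.

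\textbf{The log-polyhomogeneous claim is wrong.} You assert that ``since $n$ is even and $s=0$, $T_0$ is log-polyhomogeneous rather than classical polyhomogeneous.'' This is false: by Theorem~\ref{OkHessTHM1}, $T_s = U_s + V_s$ with $U_s$ classical polyhomogeneous of order $n-2s$ and $V_s$ classical of order $2$, so $T_0$ is classical polyhomogeneous of order $n$. The logarithms enter only after differentiating in $s$, since $\partial_s|\xi|^{n-2s} = -2|\xi|^{n-2s}\log|\xi|$; this is why the log-polyhomogeneous machinery of Section~\ref{logsection} is needed for the \emph{determinant} $\zeta'(0)$ in even dimensions (Theorem~\ref{DiracExtremals}), not for $\zeta(0)$ itself. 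The paper makes this explicit: in the proof of Corollary~\ref{ZetaItself} it appeals only to ``standard'' spectral theory (compact resolvent for a classical elliptic $\Psi$DO), remarking that this is ``also a special case of Theorem~\ref{spectrum}.'' Your invocation of the stronger log-polyhomogeneous G\aa rding estimate is not incorrect in the sense of giving a false conclusion, but it misidentifies where the new analysis is actually required.

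\textbf{The restriction step needs the factorization.} Writing ``restricting $T_0$ to $\confdiffperp$'' and then invoking ellipticity is not quite rigorous: the leading symbol of $T_0$ is degenerate (it vanishes exactly on the symbol-level image of $\sigma_L(\projconfdiffperp)^\perp$, as your Cauchy--Schwarz argument correctly shows), so $T_0$ itself is not elliptic and one cannot directly apply G\aa rding or compact-resolvent arguments to it. The paper handles this via the explicit factorization $\Hess\zeta(0) = \projconfdiffperp H_0 \projconfdiffperp$ (a forward reference to Proposition~\ref{factorizing}), where $H_0$ is a genuinely elliptic classical $\Psi$DO of order $n$ with leading symbol a positive multiple of $|\xi|^n\cdot\mathrm{Id}$. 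The finite-dimensional subspace $V$ is then the span of the non-negative eigenspaces of $(-1)^k H_0$, and the argument concludes exactly as you outline. Your symbol analysis is right, but you should make the passage from ``degenerate non-positive symbol'' to ``elliptic operator sandwiched between projections'' explicit.
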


\begin{remark}\label{DefExtremal}
By extremality at $g_0$ of a functional $F$ on the space of metrics, apart from in the directions $V(M,g_0)+\diff$, we mean as follows: If $g_t$ is a $C^\infty$-curve of Riemannian metrics with
\[
k:=\frac{d}{dt}_{\big|t=0}g_t \in (V+\diff)^\perp=\diff^\perp\cap V^\perp,
\]
where $\perp$ designates the orthogonal $L^2$-complement as in (\ref{DIPdef}). Then there exists $\delta=\delta(F,k)>0$ such that
\beq\label{WeakEx}
0<|t|<\delta\Rightarrow F(g_t)>F(g_0).
\eeq
\end{remark}

\begin{proof}[Proof of Corollary \ref{ZetaItself}]

To apply ellipticity arguments in this situation, we need to break the gauge invariance. This can be done by factorizing out the projections onto the invariant directions, as described in detail in Section \ref{projections}. Namely, for this we solve a system of geometric elliptic equations (Proposition \ref{projectionsymbols}) to find the explicit orthogonal projection onto a certain subspace $\diffperp$ of the tangent space of the Riemannian manifold of Riemannian metrics
\[
\projdiffperp: \CinfStwoM\to\diffperp
\]
as a $\Psi$DO of order 0, defined up to smoothing operators, where $\diff=\diffspin$ is the tangent space of the pullback of metrics by diffeomorphisms
\beq
\diffspin=\Big\{\frac{d}{dt}_{\big\vert t=0}\varphi^*_tg_0\:\Big\vert\:\varphi_t\:\textrm{is a 1-param. family of spin-diffeomorphisms}\Big\},
\eeq
Note that the space $\diffspin$ consists by (\ref{DetModuli}) of zero directions for the stability operator. The result is
\begin{align*}
&\projdiffperp=\Id-\nabla^\odot\Big[\diverg\nabla^\odot\Big]^{-1}\diverg,\\
&\sigma_L\Big(\projdiffperp\Big)(x,\xi)K=\Pi_\xi^\perp K\Pi_\xi^\perp.
\end{align*}
where $\nabla^\odot$ is the symmetrized covariant derivative. The projection $\projconfdiffperp$, relevant when there is conformal invariance (e.g. functional determinant in odd dimensions), is treated similarly in Proposition \ref{projectionsymbols}, where
\beq
\conf=\Big\{\varphi g_0\:\Big\vert\:\varphi\in\Cinf{M}\Big\}\quad\textrm{and}\quad\conf^\perp=\Big\{k\in\CinfStwoM\:\Big\vert\:\tr_{g_0}k=0\Big\},
\eeq
and the leading symbol of the projection is
\beq\label{ProjConfLeading}
\sigma_L\Big(\projconfdiffperp\Big)(x,\xi)K=\Pi_\xi^\perp K\Pi_\xi^\perp-\frac{1}{n-1}\tr\big(\Pi_\xi^\perp K\big)\Pi_\xi^\perp
\eeq

From Theorem \ref{DiracHess} and the formula (\ref{ProjConfLeading}), the leading symbol of $\zeta_{\Dirac^2}(0)$ is
\[
u_0(x,\xi)K=(n-1)2^{\lfloor\frac{n}{2}\rfloor-2}\Big(\frac{1}{4\pi}\Big)^{\fracsm{n}{2}}\frac{\Gamma(\frac{n}{2}+1)^2}{\Gamma(n+2)}|\xi|^n\projconfdiffperp.
\]

Then using the factorization result (Proposition \ref{factorizing}), since $\zeta_{\Dirac^2}(0)$ is conformally invariant for $n$ even, we can write for the stability operator (i.e. $L^2$-Hessian)
\beq
\Hess\zeta(0)=\projconfdiffperp H_0\projconfdiffperp
\eeq
in even dimension $n=2k$, where $H_0$ is a new pseudodifferential operator, now with a chance of being an elliptic operator. In the applications in this paper, the new operator $H_0$ is in fact elliptic, and $H:=(-1)^{k}H_0$ furthermore has negative definite leading symbol (in other situations however, this may not be the case, as the examples of the Laplacian on forms in \cite{OW}, and the half-torsion in \cite{Mol2} show), and is symmetric with respect to the $L^2$-inner product.

The results anticipated in the above discussion suggest taking $V$ to be the finite-dimensional subspace
\beq\label{Vdef}
V:=\bigoplus_{\lambda_k\geq 0}E_k(H)
\eeq
of non-negative eigenspaces for the elliptic pseudodifferential operator with positive leading symbol $H$. Namely, by the spectral theory for such operators (which is a standard consequence of the compactness of the resolvent, e.g. also a special case of Theorem \ref{spectrum} in the present paper) on the closed manifold $M$, one finds that $H$ has finite multiplicity, discrete, pure eigenvalue spectrum with $|\lambda_k|\to\infty$, and that the spectrum is bounded from above,
\beq
\spec{H}\subseteq (-\infty,c],\quad c>0,
\eeq
and consequently there is at most a finite number of non-negative eigenvalues, each with a corresponding finite-dimensional eigenspace, and hence
\[
\dim V <\infty.
\]

To verify our definition of $V$, note that for $k\in(V+\confdiff)^\perp\backslash \{0\}$,
\beq\label{Hneg}
\Hess F(k,k)=\DIP{\projconfdiffperp k}{H\projconfdiffperp k}=\DIP{k}{Hk}<0,
\eeq
using that $k\in\confdiff^\perp$ and $k\in V^\perp$, respectively.

Thus, since $g_t$ is assumed to be a smooth curve of metrics, the local extremality claim follows from Taylor's formula with remainder for real smooth functions (and of course it is even a strict local extremum).
\end{proof}

For the more complicated case of the determinant of $\Dirac^2$ and of $L$, or equivalently of $\zeta'_{P}(0)$, one would like to apply an approach similar to the one in the proof of Corollary \ref{ZetaItself}, but must here take the $s$-derivative at $s=0$ in the expression in Theorem \ref{DiracHess}. To treat this rigorously it is convenient to work in the log-polyhomogeneous symbol class, recently introduced by M. Lesch in \cite{Le}. We review this in Section \ref{logsection} together with the needed theory of holomorphic families of pseudodifferential operators. This is a relatively new class of operators, and we present it with slightly weaker assumptions than previous authors (\cite{Le}, \cite{KV}, \cite{PS}) by the use of Cauchy estimates in the holomorhpic parameter.

Namely, in even dimensions the rôle of the log-polyhomogeneous class is essential, since the leading symbol of the stability operator of $\det\Dirac^2$ is of the form
\beq\label{logform}
|\xi|^nA(x,\xi)\log|\xi|+|\xi|^nB(x,\xi),\quad A, B\in C^\infty(T^*M, \Endo{TM}).
\eeq
The difference between odd and even dimensions originates in the $\Gamma(s-n/2)$ factor in (\ref{modifiedzeta}), which is analytic in $s$ at $s=0$ if $n$ is odd, while having whenever $n$ is even a simple pole at $s=0$ (see Lemma \ref{gammalemma}).

To deduce the extremal results for the determinant, we need spectral results similar to those discussed in the proof of Corollary \ref{ZetaItself} above. In Section \ref{logsection} we define the appropriate notions of hypoellipticity and positivity for log-polyhomogeneous symbols, while Theorem \ref{spectrum} gives the main spectral result, namely the finite index propery in the class of symmetric log-polyhomogeneous operators with positive leading symbol. The main ingredient in proving this is a G\aa{}rding inequality (Corollary \ref{Gaarding}), for hypoelliptic symbols, and the simple observation that operators with positive leading symbol have square roots in the hypoelliptic class of half the bi-degrees (Lemma \ref{squareroot}).

In Section \ref{projections} we establish the explicit factorization results mentioned above (Proposition \ref{factorizing}). Finally we show that the explicit symbols of the form in (\ref{logform}), coming from Theorem \ref{DiracHess}, are in the new class. This happens to be a non-trivial point, because for log-polyhomogeneous symbols the correct notion of leading symbol includes both terms in Equation (\ref{logform}), and not only the highest log-degree. In fact in this application, the endomorphism $A$ will be singular, but since $B$ is regular and sufficiently large, we can prove hypoellipticity (Proposition \ref{hypoandpositive}).

This will conclude the proof of the second main theorem of the present paper, where the extremals are again to be interpreted as in Remark \ref{DefExtremal}. Recall that the determinant is always invariant in certain large subspaces of directions, for even and odd dimensions being $\diffspin$ and $\confdiffspin$, respectively.

\begin{maintheorem}\label{DiracExtremals}
Let $(M^n,\gamma)$ be a closed Riemannian spin manifold with $n\geq 3$. Consider local, volume preserving variations of the metric with fixed topological
spin structure. Assume that the kernel of its
Atiyah-Singer-Dirac operator $\Dirac$ has stable
dimension under these variations, and that the ground metric $g$ is a stationary point of $\det\Dirac^2$.
\newline\indent{}
Then, apart from in $V(M,g_0)+\diff$ when the dimension is even (respectively $V(M,g_0)+\confdiff$ when the dimension is odd), for a finite dimensional (possibly empty) subspace of directions $V\subseteq\CinfStwoM$, the metric $g$ is a local maximum for $(-1)^{\lfloor n/2\rfloor}\det\Dirac^2$.
\end{maintheorem}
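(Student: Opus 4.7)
The plan is to follow the template established in the proof of Corollary \ref{ZetaItself}, extending it from $\zeta_{\Dirac^2}(0)$ to $-\zeta'_{\Dirac^2}(0)=\log\det\Dirac^2$, and treating odd and even dimensions uniformly by means of the log-polyhomogeneous calculus developed in Section \ref{logsection}.

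The first step is to extract the leading symbol of the stability operator of $\log\det\Dirac^2$ by differentiating at $s=0$ in the meromorphic family $T_s=U_s+V_s$ produced from Theorem \ref{DiracHess}. When $n$ is odd, $\Gamma(s-n/2)$ in the definition (\ref{modifiedzeta}) of $\mathcal{Z}(s)$ is regular at $s=0$, so the $s$-derivative of $u_s$ yields an ordinary classical polyhomogeneous symbol of degree $n$. When $n$ is even, the simple pole of $\Gamma(s-n/2)$ at $s=0$ (Lemma \ref{gammalemma}) produces a $\log|\xi|$ factor and places the leading symbol in the log-polyhomogeneous form (\ref{logform}), with $A,B$ read off from the two leading Laurent coefficients of $\Gamma(-S+1)^2/\Gamma(-2S+2)$ at $S=-n/2$ combined with the pointwise quadratic form from Theorem \ref{DiracHess}. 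I would then factor out the gauge invariance via Proposition \ref{factorizing}: in even dimension
\[
\Hess\bigl(\log\det\Dirac^2\bigr) = \projdiffperp\,H_0\,\projdiffperp,
\]
and in odd dimension the same identity holds with $\projdiffperp$ replaced by $\projconfdiffperp$. The leading symbol of $H_0$ is then computed by composing the projection symbols of Proposition \ref{projectionsymbols} with the expression above, and I would check by a direct quadratic form analysis on $S^2T_xM$ that the self-adjoint operator $H:=(-1)^{\lfloor n/2\rfloor}H_0$ is such that $-H$ has positive leading symbol in the appropriate (log-)polyhomogeneous sense.

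Granted this, Theorem \ref{spectrum} gives $H$ a discrete real spectrum with finite multiplicities and bounded above, so $\spec H\subseteq(-\infty,c]$ and
\[
V := \bigoplus_{\lambda_k\geq 0} E_k(H) \subseteq \CinfStwoM
\]
is finite-dimensional. For any nonzero $k\in(V+\diff)^\perp$ if $n$ is even, or $k\in(V+\confdiff)^\perp$ if $n$ is odd, the factorization together with $k\in V^\perp$ and $\projdiffperp k=k$ (resp.\ $\projconfdiffperp k=k$) yield
\[
\Hess\bigl((-1)^{\lfloor n/2\rfloor}\log\det\Dirac^2\bigr)(k,k) = \DIP{k}{Hk}<0,
\]
and Taylor's formula with remainder gives the strict local maximum in the sense of Remark \ref{DefExtremal}.

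The main obstacle is the hypoellipticity and positivity verification in even dimensions. A direct calculation suggests that the endomorphism $A$ in (\ref{logform}) is singular even after restriction to the range of $\projdiffperp$, mirroring the degenerate $|\xi|^n\projconfdiffperp$ symbol seen in Corollary \ref{ZetaItself}, so that the top log-degree term alone does not control the symbol. The resolution is precisely the content of Proposition \ref{hypoandpositive}: one must verify that on the kernel of $A$ the subleading regular term $B$ is sufficiently large and of the correct sign, so that the full log-polyhomogeneous symbol is hypoelliptic in the sense of Section \ref{logsection}. Once this is in hand, the G\aa{}rding inequality (Corollary \ref{Gaarding}) and Theorem \ref{spectrum} plug in as above, and the remainder of the argument is a direct adaptation of the proof of Corollary \ref{ZetaItself}.
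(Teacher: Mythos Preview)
Your proposal is correct and follows essentially the same route as the paper's own proof: invoke Lemma \ref{gammalemma} to differentiate Theorem \ref{DiracHess} at $s=0$, apply the factorization of Proposition \ref{factorizing} to strip off the gauge-invariant directions, verify via Proposition \ref{hypoandpositive} that the resulting operator lies in $\mathrm{HS}_+^{n}$ (odd $n$) or $\mathrm{HCL}_+^{n,1}$ (even $n$), and then feed this into Theorem \ref{spectrum} to conclude exactly as in Corollary \ref{ZetaItself}. Your identification of the even-dimensional hypoellipticity issue and its resolution through Proposition \ref{hypoandpositive} is precisely the point the paper emphasizes.
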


\begin{remarks}
\begin{itemize}
\item[] 
\item[(1)] The restriction $n\geq 3$ is technical and is due to the
fact that we need $U_s$, which is the locally computable part of the symbol, to be
the leading symbol near $s=0$.
\item[(2)] We point the reader to other results concerning such alternating $\modulo 4$ patterns for zeta regularized quantities, in the paper \cite{Mol1}. E.g. the sign of $\log\det(\Dirac^2,S^n)$ is $(-1)^{\lfloor(n-1)/2\rfloor}$, and $\lim_{n\to\infty}\det(\Dirac^2,S^n)=1$.
\item[(3)]
The reason for the ``opposite'' behavior for the determinants of the Dirac and conformal Laplacians is not completely evident, as has been discussed previously from a geometric viewpoint in \cite{BrFuncDet}. See \cite{Mol2} for a connection with theoretical physics, which is at least consistent with this behavior.
\end{itemize}
\end{remarks}

Specializing again to spheres $S^n$, which are spin manifolds, we note that the standard, round metrics are stationary points for $\det\Dirac^2$ (see e.g. Proposition \ref{stationarySpheres} in the present paper for a proof that works in any dimension, or see \cite{MoO} and \cite{Bl} for a simpler proof in the conformally invariant situations). Theorem \ref{DiracExtremals} applies in this case, namely the dimension of the kernel is constant, since the round sphere, and hence metrics close to it in the Fréchet space topology have no harmonic spinors (though it is known that sufficiently far away from the round metric on $S^n$ such spinors do exist, e.g. \cite{Da}). Thus we obtain as promised a partial verification of this version of Branson's Conjecture.

One important application of the results proven here is to the round spheres $(S^n,g_0)$, which by Proposition \ref{stationarySpheres} are stationary points of $\zeta_{\Dirac^2}(s)$, for any $s\in\Complex$ a regular point of $\zeta_{\Dirac^2}$, and hence the results apply. As mentioned in the introduction, in later joint work with B. \O{}rsted (\cite{MoO}) we utilize Theorem \ref{DiracHess} as a key component for obtaining stronger statements, namely without the exceptional directions. In particular $V(S^{2k+1},g_0)=\{0\}$ for the determinant. The proof relies on a remarkable rigidity theorem for conformal functionals, which in turn is proved using the semisimple Lie theory of the conformal group $\SO{n+1,1}$. The results obtained are:

\begin{maintheorem}[\cite{MoO}]
Among metrics on $S^{2k}$ of fixed volume, the standard sphere $(S^{2k},g_{0})$ is a local maximum for $(-1)^k\zeta_{\Dirac^2}(0)$.
\end{maintheorem}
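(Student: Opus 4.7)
The plan is to deduce the theorem by combining Corollary \ref{ZetaItself}, applied at the round sphere $(S^{2k},g_0)$, with the rigidity theorem for conformal functionals from \cite{MoO}, which eliminates the residual exceptional finite-dimensional subspace of directions.

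First I would verify the hypotheses of Corollary \ref{ZetaItself} at $g_0$. By Proposition \ref{stationarySpheres}, the round metric is a stationary point for $\zeta_{\Dirac^2}(s)$ at every regular $s$, in particular at $s=0$. Moreover $(S^{2k},g_0)$ carries no harmonic spinors, and by upper semicontinuity of $\dim\ker\Dirac$ under $C^\infty$-small perturbations the kernel remains trivial in a neighborhood of $g_0$, so the stable dimension hypothesis holds. Corollary \ref{ZetaItself} then yields a local maximum for $(-1)^k\zeta_{\Dirac^2}(0)$ at $g_0$, apart from possibly in $V(S^{2k},g_0)+\confdiff$, where $V$ is the sum of non-negative eigenspaces (as in \eqref{Vdef}) of the elliptic $\Psi$DO $H=(-1)^kH_0$ extracted via Proposition \ref{factorizing} from the Hessian computed in Theorem \ref{DiracHess}.

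Second, I would invoke the rigidity theorem from \cite{MoO} to show $V(S^{2k},g_0)=\{0\}$. The crucial observation is that in even dimensions $\zeta_{\Dirac^2}(0)$ is conformally invariant, hence defines a \emph{conformal functional} on $(S^{2k},g_0)$. The rigidity statement of \cite{MoO}, proved by semisimple harmonic analysis on the conformal group $\SO{2k+1,1}$ acting on symmetric $2$-tensors on the round sphere, shows that the Hessian of any such conformal functional at $g_0$ can have no zero or positive eigendirections in $\confdiff^\perp$ beyond those forced by the conformal $\SO{2k+1,1}$-symmetry itself. Inserting our specific $H_0$ from Theorem \ref{DiracHess} and \eqref{ProjConfLeading}, the rigidity result forces the finite-dimensional space $V(S^{2k},g_0)$ to be trivial.

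Third, I would assemble the conclusion on the fixed-volume slice. Volume preservation removes the pure dilation direction; the remaining $\conf$-directions are neutral by conformal invariance of $\zeta_{\Dirac^2}(0)$ in even dimensions, and the $\diff$-directions are neutral by naturality. On directions in $(\conf+\diff)^\perp$ with fixed volume, the identity in \eqref{Hneg}, combined with $V(S^{2k},g_0)=\{0\}$, gives $(-1)^k\mathrm{Hess}\,\zeta_{\Dirac^2}(0)(k,k)>0$ for $k\neq 0$, and Taylor's formula yields the strict local maximum as in Remark \ref{DefExtremal}. The main obstacle is the second step: the rigidity theorem itself is genuinely new input requiring the representation theory of the conformal group and is the content of \cite{MoO}; the role of the present paper is to supply the explicit pseudodifferential Hessian of Theorem \ref{DiracHess} and the factorization of Proposition \ref{factorizing}, which are exactly the data that the rigidity theorem consumes.
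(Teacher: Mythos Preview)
The paper does not actually contain a proof of this theorem; it is stated there as a result of the companion paper \cite{MoO}, with the present paper supplying only the prerequisites (Theorem~\ref{DiracHess}, Corollary~\ref{ZetaItself}, Proposition~\ref{factorizing}). Your outline matches precisely the strategy the paper attributes to \cite{MoO}: apply Corollary~\ref{ZetaItself} at the round sphere (stationarity from Proposition~\ref{stationarySpheres}, trivial kernel by positive scalar curvature and Lichnerowicz, hence stable kernel nearby), then invoke the $\SO{2k+1,1}$-rigidity theorem of \cite{MoO} to kill the residual finite-dimensional space $V(S^{2k},g_0)$ using conformal invariance of $\zeta_{\Dirac^2}(0)$ in even dimensions. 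So your plan is the intended one.

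One small correction in your third step: you write $(-1)^k\Hess\,\zeta_{\Dirac^2}(0)(k,k)>0$ and then conclude a strict local \emph{maximum}. For a maximum of $(-1)^k\zeta_{\Dirac^2}(0)$ you need the Hessian of that quantity to be negative on $\confdiffperp\cap V^\perp$, i.e.\ $(-1)^k\Hess\,\zeta_{\Dirac^2}(0)(k,k)<0$, exactly as in \eqref{Hneg}. This is just a sign slip and does not affect the argument.
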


\begin{maintheorem}[\cite{MoO}]
Among metrics on $S^{2k+1}$ of fixed volume, the standard sphere $(S^{2k+1},g_{can})$ is a local maximum for $(-1)^k\det\Dirac^2$. Furthermore, apart from in the natural invariant directions $\conf_{g_0}+\diff_{g_0}$, the maximum is strict.
\end{maintheorem}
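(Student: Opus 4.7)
The plan is to reduce the statement to Theorem \ref{DiracExtremals} combined with the conformal rigidity theorem for $(S^n,g_{\mathrm{can}})$ from \cite{MoO}, and then eliminate the exceptional subspace $V(S^{2k+1},g_0)$ by representation-theoretic means using the full conformal group $G=\SO{2k+2,1}$.

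First I would verify the hypotheses of Theorem \ref{DiracExtremals}. The round sphere $(S^{2k+1},g_{\mathrm{can}})$ is a stationary point of $\det\Dirac^2$ by Proposition \ref{stationarySpheres}. Moreover the Dirac spectrum on the round sphere is $\pm(\tfrac{n}{2}+j)$ for $j\in\Naturals$, so $0\notin\spec{\Dirac}$; since the Dirac spectrum varies continuously (under the Bourguignon--Gauduchon identification reviewed in Appendix \ref{BGAppendix}) with respect to the $C^\infty$-topology on metrics, $\dim\ker\Dirac\equiv 0$ for all metrics in some $C^\infty$-neighborhood, so the stability assumption is satisfied. Applying Theorem \ref{DiracExtremals} with $n=2k+1$, I obtain that $(-1)^k\det\Dirac^2$ has a local maximum at $g_{\mathrm{can}}$ apart from in $V(S^{2k+1},g_0)+\confdiff$, for some finite-dimensional subspace $V\subseteq\CinfStwoM$.

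The remaining task is to prove $V(S^{2k+1},g_0)=\{0\}$. Here I would exploit the fact that $\det\Dirac^2$ is conformally invariant in odd dimensions. Using the factorization $\Hess(\det\Dirac^2)=\projconfdiffperp\,H\,\projconfdiffperp$ as in the proof of Corollary \ref{ZetaItself} (replacing $\zeta(0)$ by $\zeta'(0)$ and arguing in the log-polyhomogeneous class of Section \ref{logsection}), the operator $H$ is an elliptic symmetric $\Psi$DO on the space of trace-free symmetric $2$-tensors, and by conformal and diffeomorphism invariance of $\det\Dirac^2$ it is equivariant for the natural action of $G=\SO{2k+2,1}$. On the transverse-traceless slice $\confdiffperp$, standard branching (decomposing under the maximal compact $\SO{2k+2}$ and applying Frobenius reciprocity) yields a multiplicity-free decomposition into irreducibles, so by Schur's lemma $H$ acts as a scalar on each isotypic component. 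The rigidity theorem of \cite{MoO} asserts that for conformal functionals these scalars are recovered from the leading symbol alone; hence the eigenvalues of $H$ are computed directly from the explicit formula in Theorem \ref{DiracHess} combined with (\ref{ProjConfLeading}). A direct inspection shows that $(-1)^k$ times this leading symbol is negative on the TT-cone, and since the eigenvalues of $H$ on each irreducible are exactly those leading-symbol values, all eigenvalues of $(-1)^kH$ are strictly negative. Therefore $V=\bigoplus_{\lambda_k\geq 0}E_k((-1)^kH)=\{0\}$.

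Strict local maximality then follows by the same Taylor argument as in (\ref{Hneg}): for any smooth curve $g_t$ with $\dot g_0=:k\in\confdiffperp\setminus\{0\}$ one has $\Hess(\det\Dirac^2)(k,k)=\langle\!\langle k,Hk\rangle\!\rangle_{g_0}$, which has strictly negative sign $(-1)^{k+1}$ by the previous paragraph, and the strict inequality $(-1)^k\det\Dirac^2(g_t)>(-1)^k\det\Dirac^2(g_0)$ for small nonzero $|t|$ follows. The main obstacle I anticipate is the invocation of the rigidity theorem from \cite{MoO}: one must check both that the $G$-equivariance genuinely forces $H$ to be determined by its leading symbol on each irreducible (the non-trivial content of the rigidity theorem, whose proof uses the semisimple structure of $\SO{n+1,1}$), and that the leading-symbol scalar given by (\ref{ProjConfLeading}) combined with Theorem \ref{DiracHess} does not vanish on any irreducible constituent of the TT-representation.
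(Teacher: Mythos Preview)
This theorem is not actually proved in the present paper; it is stated as a result from the companion paper \cite{MoO}, and the only discussion here is the paragraph following the statement, which sketches exactly the strategy you describe: use Theorem~\ref{DiracHess} (via Theorem~\ref{DiracExtremals}) to get local extremality up to a finite-dimensional exceptional subspace $V$, and then apply the conformal rigidity theorem of \cite{MoO}, proved using the semisimple Lie theory of $\SO{n+1,1}$, to conclude $V(S^{2k+1},g_0)=\{0\}$. So your outline is in agreement with the paper's own description of the argument.

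That said, your paraphrase of the rigidity theorem---``these scalars are recovered from the leading symbol alone''---is your own guess at its content, and you correctly flag it as the main obstacle. The paper does not state the rigidity theorem precisely, so there is no way to verify from this paper alone that the step from ``$H$ is $G$-equivariant with a sign-definite leading symbol'' to ``all eigenvalues of $(-1)^kH$ on $\confdiffperp$ are strictly negative'' goes through as you describe. In particular, the claim that the decomposition of the TT-slice under $\SO{2k+2}$ is multiplicity-free and that Schur's lemma then pins down each eigenvalue from the leading symbol is plausible but not something you can justify using only what is in this paper. Also note a slip in your final sentence: if $(-1)^k\det\Dirac^2$ has a strict local \emph{maximum} at $g_0$, the inequality should read $(-1)^k\det\Dirac^2(g_t) < (-1)^k\det\Dirac^2(g_0)$ for small $|t|\neq 0$, not the reverse.
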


The proof given in the follow-up paper \cite{MoO} is quite in the spirit of T. P. Branson's work on the determinant, and spells out clearly (comparing to the analogous paper for the determinant of the conformal Laplacian in \cite{Ok1}, which also becomes a special case of \cite{MoO}), why such results are true. Namely in odd dimensions determinants of integer powers of conformally covariant operators (with trivial kernels), are conformally invariant functionals. Furthermore the conformal group of the standard $S^n$ is large (it has maximal dimension), in the sense that it acts irreducibly on the relevant quotient. The argument given there is thus general and shows that on the round sphere the situation is very rigid, and one can in great generality expect that for conformally invariant functionals, the exceptional subspaces for determinants are trivial on the sphere,
\[
V(S^n,g_0))=\{0\}.
\]

\section{Dirac operators and non-conformal change of metrics}\label{Section:Diracoperators}
Let $M$ be an oriented Riemannian manifold of dimension $n$. Let
\[
\mathcal{F}_{\mathrm{SO}}M=\mathcal{F}_{\mathrm{SO}}(TM)
\]
be the bundle of oriented orthogonal frames. This is in a natural way a principal $\SO{n}$-bundle. Recall the following bundle theoretical notion of spin structure.
\begin{definition}
A spin structure on $M$ is a principal
$\Spin{n}$-bundle $\PSpin M$ that $2$-fold covers $\mathcal{F}_\mathrm{SO}M$ as
$G$-bundles, that is:
\begin{equation*}
\xymatrix @R=0.5pc{
\PSpin M\times\Spin{n}\ar[r]\ar[dd]_{\Pi\times\pi} & \PSpin M\ar[dd]_{\Pi}\ar[dr]^{\pi_\mathcal{P}} & \\
  &   & M\\
\mathcal{F}_\mathrm{SO}M\times\SO{n}\ar[r] & \mathcal{F}_\mathrm{SO}M\ar[ur]_{\pi_{\mathcal{F}}} &}
\end{equation*}
If $M$ admits a spin structure it is said to be a spin manifold.
\end{definition}
Though defined in a Riemannian geometric setting, being a spin manifold is a differential topological property, and the
spin structure is independent of the metric $g$ up to a certain equivalence of
spin structures (i.e. as principal bundle coverings). Here the interest is in changing the underlying Riemannian
metrics. Recall that whether $M$ is spin or not can be read off from the second
Stiefel-Whitney class in the second $\mathrm{\check{C}}$ech
$\mathbb{Z}_2$-cohomology of $M$, $w_2(TM)$, which vanishes if and
only if $M$ is spin. When this is the case, then the number of inequivalent spin
structures is related to the fundamental group (assuming $M$ is connected) by
\[
\Big|\Big\{\mathrm{inequivalent\; spin\; structures}\Big\}\Big|=\Big|\mathrm{Hom}(\pi_1(M),\mathbb{Z}_2)\Big|
\]

Recall that the spheres $S^n$ are examples of spin manifolds, and if $n\geq 2$ the spin structure is unique (up to equivalence).

\begin{definition}
If $M$ is spin then the spinor bundle is the complex vector bundle
\[
\Sigma M=\mathcal{P}_\mathrm{Spin} M\times_\rho\Complex^{2^k},
\]
where the associating $\rho$ is as the spinor representation, and $n=2k$ or $n=2k+1$.
\end{definition}
The bundle $\Sigma M$ naturally has a Hermitian structure, denoted in each fiber simply by $\langle\cdot,\cdot\rangle_x$. This comes about by
the usual procedure through averaging the standard Hermitian inner product
on $\Complex^{2^k}$ with respect to a certain finite group that generates $\CliffC_n$, and so this invariant inner product descends to the associated bundle $\Sigma M$.

Recall also the definition of the associated Dirac operator.
\begin{definition}\label{DiracDef}
The Atiyah-Singer-Dirac operator $\Dirac$ is the composite map
\[
\xymatrix {\Cinf{\Sigma M}\ar[r]^{\hspace{-1.7em}\widetilde{\nabla}}&
\Cinf{T^*M\otimes\Sigma M}\ar[r]^{\hspace{0.2em}\#}&\Cinf{TM\otimes\Sigma M}\ar[r]^{\hspace{1.2em}\cdot_\gamma}&\Cinf{\Sigma M}},
\]
where $\cdot_\gamma$ denotes Clifford multiplication.
\end{definition}

\begin{theorem}[\cite{BG}]
Let $\gamma$ and $\eta$ be two spin metrics corresponding to the same
topological spin structure (i.e. to two corresponding reductions of the structure group from $\widetilde{\mathrm{G}}\mathrm{l}^+(n)$ to $\SO{n}$, see the Appendix).

There is a bundle map $\beta$ between spinor bundles
\[
\beta_\eta^\gamma:\Sigma M_\gamma\to\Sigma M_\eta,
\]
which is equivariant so that it induces a map on smooth sections (i.e. spinor fields), still denoted in the same way:
\[
\beta_\eta^\gamma:\Cinf{\Sigma M_\gamma}\to\Cinf{\Sigma M_\eta},
\]
with the properties:
\begin{itemize}
\item[(1)] $\beta$ is a fiberwise isometry of Hermitian vector bundles.
\item[(2)] $b$ and $\beta$ are
compatible with Clifford multiplication, in the sense that
\beq\label{betaCliff}
\beta_\eta^\gamma(X\cdot_\gamma\varphi)=b_h^g(X)\cdot_\eta\beta_\eta^{\gamma}(\varphi),
\eeq
where $b$ is the natural map .
\end{itemize}

\end{theorem}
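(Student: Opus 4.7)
The plan is to obtain $\beta_\eta^\gamma$ as the associated-bundle map coming from a canonical $\Spin{n}$-equivariant isomorphism of spinor frame bundles, which in turn is lifted from a natural isomorphism of $\SO{n}$-frame bundles defined by linear algebra. First I would construct the canonical fiberwise map $b=b_h^g$. Since $g$ and $h$ are two Riemannian metrics on the same tangent bundle, the fiberwise endomorphism $A:=g^{-1}h$, characterized by $h(X,Y)=g(AX,Y)$, is $g$-symmetric and positive, so its pointwise positive square root $b:=A^{1/2}$ (well-defined by the spectral theorem and smoothly varying in $(g,h)$) satisfies
\[
g(bX,bY)=h(X,Y),\qquad X,Y\in T_xM,
\]
making $b:(TM,h)\to(TM,g)$ a fiberwise isometry. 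Applying $b$ to frames induces an $\SO{n}$-equivariant diffeomorphism $\tilde b:\mathcal{F}_{\SO}M_h\to\mathcal{F}_{\SO}M_g$ covering the identity on $M$.

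Next I would lift $\tilde b$ to the spin level. The hypothesis that $\gamma$ and $\eta$ correspond to the same topological spin structure means precisely that $\tilde b^{*}\PSpin M_g$ is equivalent to $\PSpin M_h$ as a $\Spin{n}$-equivariant $\mathbb{Z}_2$-cover of $\mathcal{F}_{\SO}M_h$, which yields a $\Spin{n}$-equivariant bundle isomorphism
\[
\hat b:\PSpin M_h\to\PSpin M_g
\]
covering $\tilde b$. The associated-bundle construction via the spinor representation $\rho$ then provides the required map
\[
\beta_\eta^\gamma:\Sigma M_\gamma\to\Sigma M_\eta,\qquad [p,v]\mapsto[\hat b^{-1}(p),v],
\]
which descends to a smooth map on sections since $\hat b$ is smooth.

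Properties (1) and (2) then follow formally from equivariance. For (1), the Hermitian inner product on $\Complex^{2^{\lfloor n/2\rfloor}}$ is $\Spin{n}$-invariant by the averaging construction over the generating finite group of $\CliffC_n$ recalled in the text, so the fiberwise Hermitian structures on $\Sigma M_\gamma$ and $\Sigma M_\eta$ are induced from the same invariant pairing and are preserved by any map coming from an equivariant principal bundle isomorphism. For (2), Clifford multiplication is $\Spin{n}$-equivariant at the level of representations, and since $b$ is exactly the tangent-bundle isometry interchanging $\cdot_\gamma$ and $\cdot_\eta$, the intertwining \eqref{betaCliff} follows from functoriality of the associated-bundle construction applied to this equivariance.

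The main obstacle is the existence of the lift from $\tilde b$ to $\hat b$, which is where the assumption of a common topological spin structure enters essentially and where a potential $\mathbb{Z}_2$-monodromy obstruction must be shown to vanish. Once this lift is secured, all remaining checks reduce to standard equivariance of the associated-bundle construction and of the spinor representation.
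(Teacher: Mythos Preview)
Your construction is correct and follows the same architecture as the paper's Appendix (which reviews \cite{BG}): define the tangent-bundle isometry via the positive square root of the metric-change endomorphism, lift to the spin principal bundles, pass to associated spinor bundles, and deduce (1) and (2) from $\Spin{n}$-equivariance. One notational point: your $b=A^{1/2}=H_g^{1/2}$ is the \emph{inverse} of what the paper calls $b_h^g=H_g^{-1/2}$, which is why you need $\hat b^{-1}$ in your definition of $\beta_\eta^\gamma$; the compatibility \eqref{betaCliff} is written for $b_h^g$, so make sure the directions match when you unwind it.

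The one genuine difference is how the lift to spin is obtained. The paper works inside the full $\mathrm{Gl}^+(n)$-frame bundle $\mathcal{F}V$ and its universal double cover $\widetilde{\mathcal{F}}V$, interprets $b_h^g$ as horizontal transport along the straight segment $t\mapsto(1-t)g+th$ for a natural connection on $\mathcal{F}V$, and takes $\beta$ to be the lifted horizontal transport in the cover. This yields a canonical lift (no $\mathbb{Z}_2$ ambiguity) and gives smooth dependence on the pair of metrics for free. Your route invokes the common-topological-spin-structure hypothesis directly as an equivalence $\tilde b^{*}\PSpin M_g\cong\PSpin M_h$, which is valid but only pins down $\hat b$ up to the deck transformation and requires a separate word about smoothness; the connection-theoretic picture in \cite{BG} handles both issues at once.
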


The gauge transformed Dirac operator may now be described. Fixing a
topological spin structure and spin metrics $\gamma$ and $\eta$
corresponding to this and the metrics $g$ and $h$ respectively, we let
\begin{equation}
\Diracpresuffix{\gamma}{\eta}=\big(\beta_\eta^\gamma)^{-1}\circ\Dirac^\eta\circ \beta_\eta^\gamma.
\end{equation}
Note that this operator
\[
\Diracpresuffix{\gamma}{\eta}:\Cinf{\Sigma M_\gamma}\to\Cinf{\Sigma M_\gamma}
\]
is manifestly not the Dirac operator corresponding to the spinor metric $\gamma$. Rather it is an operator acting canonically on
$\gamma$-spinors but having the same eigenvalues as the Dirac operator
in the spin-metric $\eta$. The infinitesimal variation is given by the following theorem (see the Appendix for a review of the proof).

\begin{theorem}[\cite{BG}]\label{varDirac}
The infinitesimal variation in the metric direction $k$ of the Dirac operator for spinorial
metric $\gamma$ is
\begin{equation}
\bigg({\frac{d}{dt}\Diracpresuffix{\gamma}{\gamma_t}}_{\big\vert
    t=0}\bigg)\psi=-\frac{1}{2}\sum_{i=1}^{n}e_{i\cdot\gamma}\widetilde{\nabla}_{K_{g}(e_i)}^{\gamma}\psi+\frac{1}{4}\big[d(\tr_g k)-\diverg_g
    k\big]_{\cdot\gamma}\psi,
\end{equation}
where $(e_i)$ is a $g$-orthonormal frame.
\end{theorem}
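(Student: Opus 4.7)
My plan is to express $\Diracpresuffix{\gamma}{\gamma_t}\psi$ in terms of a single $g$-orthonormal frame together with a family of vector fields and connections whose $t$-dependence is explicit, and then to differentiate at $t=0$. Pick a local $g$-orthonormal frame $(e_i)_{i=1}^n$, and set $b_t := (g^{-1}g_t)^{-1/2}$ as a $g$-symmetric positive endomorphism of $TM$; then $(b_t(e_i))$ is $g_t$-orthonormal with $b_0 = \Id$, and differentiating $b_t^2 = g_t^{-1}g$ at $t=0$ gives $\dot b_0 = -\tfrac{1}{2}K_g$, where $K_g = g^{-1}k$. Writing $\Dirac^{\gamma_t}$ in the frame $(b_t(e_i))$ via Definition~\ref{DiracDef} and invoking the Clifford-compatibility~(\ref{betaCliff}) of the Bourguignon-Gauduchon bundle isometry $\beta = \beta_{\gamma_t}^\gamma$ to push the outer $b_t(e_i)$ through $\beta^{-1}$, one obtains
\[
\Diracpresuffix{\gamma}{\gamma_t}\psi = \sum_{i=1}^{n} e_i\cdot_\gamma\,\beta^{-1}\widetilde{\nabla}^{\gamma_t}_{b_t(e_i)}\beta\,\psi.
\]

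Now apply $\tfrac{d}{dt}\big|_{t=0}$, using that at $t=0$ both $\beta$ and $b_t$ are the identity, so $\beta^{-1}\widetilde{\nabla}^{\gamma_t}\beta\big|_{t=0} = \widetilde{\nabla}^\gamma$. The Leibniz rule produces two terms. The \emph{frame-variation term}, arising from differentiating the argument $b_t(e_i)$ inside the covariant derivative, is immediate:
\[
\sum_i e_i\cdot_\gamma \widetilde{\nabla}^\gamma_{\dot b_0(e_i)}\psi = -\tfrac{1}{2}\sum_i e_i\cdot_\gamma \widetilde{\nabla}^\gamma_{K_g(e_i)}\psi,
\]
which is exactly the first summand of the claimed formula.

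The \emph{connection-variation term} is $\sum_i e_i\cdot_\gamma A_{e_i}\psi$ with $A_X := \tfrac{d}{dt}\big|_{t=0}\beta^{-1}\widetilde{\nabla}^{\gamma_t}_X\beta$, a section of $T^*M\otimes\Endo{\Sigma M_\gamma}$ (its principal symbol cancels). Since the spin connection lifts the Levi-Civita connection along $\mathfrak{spin}(n)\to\mathfrak{so}(n)$ with inverse given by $E_{ij}\mapsto \tfrac{1}{4}(e_i\cdot e_j - e_j\cdot e_i)$, the computation of $A_X$ reduces to determining the first variation $C_X \in \mathfrak{so}(TM,g)$ of the pulled-back Levi-Civita connection $b_t^{-1}\nabla^{g_t}b_t$ on $(TM,g)$. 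The standard Christoffel variation formula $\dot\Gamma(X,Y;Z) = \tfrac{1}{2}[(\nabla_X k)(Y,Z)+(\nabla_Y k)(X,Z)-(\nabla_Z k)(X,Y)]$ together with the commutator correction $[\nabla^g,-\tfrac{1}{2}K_g]$ coming from differentiating the $b_t$-conjugation yields $C_X$ explicitly in terms of $\nabla k$. Lifting to $\mathfrak{spin}(n)$ produces $A_X$ as a quadratic Clifford expression; post-composing with $\sum_i e_i\cdot_\gamma$ turns each $A_{e_i}$ into a triple Clifford product, which reduces via $e_a\cdot e_b + e_b\cdot e_a = -2\delta_{ab}$ to a first-order Clifford expression. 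The fully antisymmetric part of $\nabla k$ vanishes upon contraction, and what remains assembles precisely into $\tfrac{1}{4}(d(\tr_g k)-\diverg_g k)\cdot_\gamma\psi$, giving the second summand of the stated formula.

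The main obstacle is the last step: correctly combining $\dot\Gamma$ with the $b_t$-conjugation correction to identify the right $\mathfrak{so}$-valued 1-form $C$, then lifting to $\mathfrak{spin}$ and carrying out the triple Clifford contraction without bookkeeping errors. The result is delicate because a priori distinct pieces of $\nabla k$ (the trace, the divergence, and the symmetric covariant derivative) must cancel and recombine in just the right way for the final answer to be a pure first-order Clifford operator with coefficient $\tfrac{1}{4}$, with no residual quadratic Clifford terms.
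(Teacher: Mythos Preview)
Your approach is essentially the same as the paper's: both start from the gauge-transformed expression, differentiate the frame argument $b_t(e_i)$ to obtain the $-\tfrac{1}{2}\sum_i e_i\cdot\widetilde{\nabla}_{K_g(e_i)}\psi$ term, and then reduce the connection-variation term to the first variation of the pulled-back Levi-Civita connection $b_t^{-1}\nabla^{g_t}b_t$, lift it to $\mathfrak{spin}$, and perform a triple Clifford contraction. The only difference is organizational: the paper first packages everything into the closed formula~(\ref{Diractrans}) and differentiates that, whereas you differentiate $\beta^{-1}\widetilde{\nabla}^{\gamma_t}\beta$ directly; and where you sketch the final Clifford reduction (``the fully antisymmetric part vanishes \ldots''), the paper actually carries it out explicitly by contracting the two tensors $T(X,Y,Z)=-(\nabla_Z k)(X,Y)$ and $T(X,Y,Z)=(\nabla_Y k)(X,Z)$ against $\sum_i e_i\cdot[T(e_i,\cdot,\cdot)]_\cdot$, using the symmetry $k_{ij}=k_{ji}$ together with the Clifford relations to kill the terms with distinct indices and leave exactly $d(\tr_g k)$ and $-2\,\diverg_g k$, which combine with the $-\tfrac12\nabla_X k$ piece to give the stated coefficient $\tfrac14$.
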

\begin{remark}
The divergence $\diverg_g$ of the 2-form $k$ with respect to $g$, is the covariant derivative followed by contraction, i.e. with no minus sign.
\end{remark}

\subsection{Proof of stationarity of $\det\Dirac^2$ on round spheres}
In the following proposition we observe that all round spheres are stationary points for the determinant of the Dirac squared under general volume preserving variations.

\begin{proposition}\label{stationarySpheres}
The round spheres $(S^n,g_{\textrm{can}})$ are stationary points (i.e. critical points) of
the functional $\det\Dirac^2$, with respect to volume preserving
metric variations. In fact the whole zeta function is pointwise stationary.
\end{proposition}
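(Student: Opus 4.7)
The strategy is purely representation-theoretic: identify the $L^2$-gradient of $\zeta_{\Dirac^2}(s)$ at $g_{\mathrm{can}}$ as an $O(n+1)$-invariant symmetric $2$-tensor on $S^n$, and exploit that the only such tensors are multiples of $g_{\mathrm{can}}$.

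First, I would set up the variation. Fix a spin structure $\gamma$ on $(S^n, g_{\mathrm{can}})$ and a smooth one-parameter family $g_t = g_{\mathrm{can}} + tk$ with $k \in C^\infty(S^2 S^n)$. Using the Bourguignon-Gauduchon trivialization of the spinor bundles, we obtain the isospectral family $\Diracpresuffix{\gamma}{\gamma_t}$ acting on the fixed bundle $\Sigma S^n_\gamma$, and by Theorem \ref{varDirac} the derivative $\dot{\Dirac} := \frac{d}{dt}|_{t=0}\Diracpresuffix{\gamma}{\gamma_t}$ is a first-order differential operator whose coefficients depend linearly on $k$. Standard perturbation theory for elliptic families then yields, for $\Real s$ large and by meromorphic continuation for all regular $s$,
\[
\frac{d}{dt}\bigg|_{t=0}\zeta_{\Dirac^2_{g_t}}(s)=-s\,\Tr\!\Big(\Dirac^{-2s-2}\,(\Dirac\,\dot{\Dirac}+\dot{\Dirac}\,\Dirac)\Big).
\]
The right-hand side is a continuous linear functional $L_s : C^\infty(S^2 S^n) \to \mathbb{C}$ in $k$. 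Since $\dot{\Dirac}$ is a differential operator of finite order in $k$, integration by parts (moving derivatives off $k$ onto the smooth Schwartz kernel of the compact operator $\Dirac^{-2s-2}$) shows that $L_s$ has the form $L_s(k) = \DIP{T(s)}{k}_{g_{\mathrm{can}}}$ for a smooth symmetric $2$-tensor $T(s) \in C^\infty(S^2 S^n)$.

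Next, I would invoke symmetry. The spectrum of $\Dirac^2_{g_t}$ depends only on the isometry class of $g_t$, so for every $\varphi \in O(n+1)$ (acting on $S^n$ by its standard isometric action, which lifts to the spin bundle) one has $\zeta_{\Dirac^2_{\varphi^*g_t}} = \zeta_{\Dirac^2_{g_t}}$. Differentiating at $t=0$ and using $\varphi^* g_{\mathrm{can}} = g_{\mathrm{can}}$ yields $L_s(\varphi^* k) = L_s(k)$, which together with the invariance of the volume form forces $\varphi^* T(s) = T(s)$. Thus $T(s)$ is $O(n+1)$-invariant. At any point $p \in S^n$ the isotropy subgroup $O(n) \subset O(n+1)$ acts on $T_p S^n$ by the defining representation, which is irreducible, so by Schur's lemma the $O(n)$-invariant subspace of $S^2 T_p^* S^n$ is spanned by $(g_{\mathrm{can}})_p$. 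Transitivity of the $O(n+1)$-action gives $T(s) = c(s)\,g_{\mathrm{can}}$ for some scalar $c(s)$.

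Finally, for a volume-preserving variation one has $\int_{S^n}\tr_{g_{\mathrm{can}}}(k)\,d\mathrm{vol}_{g_{\mathrm{can}}} = 0$, and hence
\[
L_s(k)=c(s)\int_{S^n}\tr_{g_{\mathrm{can}}}(k)\,d\mathrm{vol}_{g_{\mathrm{can}}}=0
\]
for every regular $s$. Taking the $s$-derivative at $s=0$ gives stationarity of $\det \Dirac^2 = \exp(-\zeta'(0))$, which is the claim. The main technical obstacle is the justification that the pointwise derivative $L_s$ is represented by a smooth tensor $T(s)$ (rather than only a distribution), which requires the elementary but careful pseudodifferential bookkeeping indicated above; once this is in hand the argument is rigid by symmetry.
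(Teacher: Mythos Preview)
Your proposal is correct and takes a genuinely different route from the paper's proof. The paper invokes a result of Bourguignon--Gauduchon (Proposition~29 in \cite{BG}) stating that on the round $S^n$ the sum of those eigenvalues branching from a fixed eigenvalue $\lambda$ under perturbation is stationary, and then differentiates the Dirichlet series for $\zeta(s)$ term by term in the half-plane of convergence before continuing analytically. Your argument bypasses this eigenvalue-by-eigenvalue input: you identify the $L^2$-gradient $T(s)$ of the full zeta function, observe that diffeomorphism invariance of the spectrum forces $T(s)$ to be invariant under the isometry group, and conclude via Schur at the isotropy that $T(s)=c(s)\,g_{\mathrm{can}}$. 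This is essentially the classical ``homogeneous metrics are critical for any Riemannian spectral functional'' principle (as in Bleecker \cite{Bl}, which the paper in fact cites as an alternative reference), and it has the virtue of being self-contained and applying uniformly to any isotropy-irreducible homogeneous space and any spectral invariant, not only to $\det\Dirac^2$ on spheres. The paper's approach, on the other hand, localizes the stationarity to each eigenvalue packet, which is finer information but requires the external result from \cite{BG}.

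One minor remark: you invoke $O(n+1)$, but $SO(n+1)$ already suffices and sidesteps the question of how orientation-reversing isometries interact with the spin structure; the isotropy $SO(n)$ still acts irreducibly on $T_pS^n$ for $n\geq 2$, so Schur's lemma goes through unchanged.
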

\begin{proof}
The spheres $S^n$ ($n\geq 2$) are simply connected spin
manifolds, and thus each has a unique topological spin structure. From a result by Bourguignon-Gauduchon (Proposition 29 in \cite{BG}) on standard round $S^n$, it is known that if we look at a specific eigenvalue $\lambda$ of the Dirac
operator in the standard metric, then the standard metric is stationary for the sum of those eigenvalues $\lambda_{(1)},\ldots,\lambda_{(n_\lambda)}$ with
multiplicities, that branch from $\lambda$ under
perturbation.

Thus, working in the halfplane $\Real s>n/2$, we apply the absolutely convergent sum
representation of the zeta function. Perturbing along a smooth curve of Riemannian metrics $g_t$
we get
\[
\frac{\partial}{\partial t}_{\big\vert
  t=0}\Big(\zeta(\Dirac^2_{g_t},s)\Big)=
\frac{\partial}{\partial t}_{\big\vert
  t=0}\bigg(\sum_\lambda\sum_{j=1}^{n_\lambda}\lambda_{(j)}^{-s}(t)\bigg)=
-s\sum_\lambda\bigg\{\lambda^{-s-1}\sum_{j=1}^{n_\lambda}\lambda_{(j)}'(0)\bigg\}=0,
\]
when $\Real s>n/2$. The analytical continuation of this to $\Complex$ is identically zero, which proves the claim.
\end{proof}

\section{Gauge-breaking projections as pseudodifferential operators}\label{Spindiff}
In considering stability operators for zeta functions of operators of ordinary Laplacian type, the natural subspace of invariant tangent directions in the manifold of Riemannian metrics is
\beq
\diff=\Big\{\frac{d}{dt}_{\big\vert t=0}\varphi^*_tg_0\:\Big\vert\:\varphi_t\:\textrm{is a 1-param. family of diffeomorphisms}\Big\}
\eeq
In the context of Dirac operators the natural subspace is
\beq
\diffspin=\Big\{\frac{d}{dt}_{\big\vert t=0}\varphi^*_tg_0\:\Big\vert\:\varphi_t\:\textrm{is a 1-param. family of spin-diffeomorphisms}\Big\}
\eeq
For local variations it suffices to consider a fixed topological spin structure, since naturally we have the following.
\begin{proposition}
\[
\diff=\diffspin.
\]
\end{proposition}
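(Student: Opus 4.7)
The inclusion $\diffspin \subseteq \diff$ is immediate from the definitions, since every spin-diffeomorphism is in particular a diffeomorphism. The content of the proposition is therefore the reverse inclusion. The plan is to show that any 1-parameter family $\varphi_t$ of diffeomorphisms with $\varphi_0 = \Id_M$ can be lifted to a 1-parameter family $\widetilde{\varphi}_t$ of spin-diffeomorphisms covering $\varphi_t$. Once this is done, the metric pullback only sees the underlying base map, so $\widetilde{\varphi}_t^{*}g_0 = \varphi_t^{*}g_0$, and differentiating at $t=0$ places every element of $\diff$ into $\diffspin$.

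To build the lift, I would argue via path-lifting for the connected double cover of the metric-independent positive frame bundle $\mathcal{F}_{\mathrm{Gl}^{+}}M$. The differential $d\varphi_t$ is a principal bundle automorphism of $\mathcal{F}_{\mathrm{Gl}^{+}}M$ depending smoothly on $t$, with $d\varphi_0 = \Id$. Since the fixed topological spin structure $\PSpin M$ embeds, after choice of $g_0$, as a $\Spin{n}$-reduction of the double cover $\widetilde{\mathcal{F}}_{\mathrm{Gl}^{+}}M \to \mathcal{F}_{\mathrm{Gl}^{+}}M$ (as reviewed in the Appendix), the homotopy lifting property of this cover supplies a unique smooth family of double-cover automorphisms $\widetilde{d\varphi}_t$ with $\widetilde{d\varphi}_0 = \Id$. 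Composing with the Bourguignon--Gauduchon isomorphism $\beta_{\varphi_t^{*}g_0}^{g_0}$ from Section~\ref{Section:Diracoperators} to transport the image back into the fixed spinor bundle yields the desired spin-bundle automorphism $\widetilde{\varphi}_t$ covering $\varphi_t$. By construction each $\widetilde{\varphi}_t$ is a spin-diffeomorphism.

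The main obstacle in executing this rigorously is the bookkeeping for metric dependence: $\PSpin M$ is defined as an $\SO{n}$-reduction tied to a particular metric, whereas $\Diff(M)$ naturally acts on the metric-independent $\mathrm{Gl}^{+}(n)$-frame bundle, so a single family $\varphi_t$ does not directly act on $\PSpin M$ as a principal bundle automorphism. This is precisely what the Bourguignon--Gauduchon framework (in \cite{BG} and the Appendix) is built to handle, and once that identification is in place, smooth dependence on $t$ and uniqueness of the lift both follow from standard covering-space theory. The proof then concludes by differentiating $\widetilde{\varphi}_t^{*}g_0 = \varphi_t^{*}g_0$ at $t = 0$ to obtain $\diff \subseteq \diffspin$.
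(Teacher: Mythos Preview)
Your argument is correct and rests on the same underlying fact as the paper's proof, namely that a path of diffeomorphisms starting at the identity cannot change the (equivalence class of the) spin structure. The paper dispatches this in one line: after reducing to $\varphi_0=\Id_M$, it observes that the set of spin structures is discrete, so the continuous assignment $t\mapsto\varphi_t^*(\textrm{spin structure})$ is constant, hence every $\varphi_t$ is spin-preserving. Your version unpacks this by explicitly lifting the homotopy $d\varphi_t$ through the double cover $\widetilde{\mathcal{F}}_{\mathrm{Gl}^+}M\to\mathcal{F}_{\mathrm{Gl}^+}M$, which is exactly what underlies the discreteness statement. The Bourguignon--Gauduchon step you append is unnecessary: once the lift $\widetilde{d\varphi}_t$ exists as a $\widetilde{\mathrm{Gl}}^+(n)$-bundle automorphism, $\varphi_t$ is already spin-preserving by definition, and no transport back to a fixed spinor bundle is required to conclude $\diff\subseteq\diffspin$.
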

\begin{proof}
We may, by composition with $\varphi_0^{-1}$, assume that $\varphi_0=\Id_M$, which is spin-preserving. Since spin structures form a discrete topological space, every $\phi_t$ in the smooth curve from the identity in $\Diff(M)$ is spin-preserving, and the two spaces coincide.
\end{proof}

\indent{}
The subspaces $\diffperp$ and $\confdiffperp$ are closed inside $L^2$-sections, since they are defined by elliptic differential conditions, and we shall need the following proposition giving the orthogonal projections, and thus their explicit leading symbols, explicitly.
\begin{proposition}\label{projectionsymbols}
The projection maps
\begin{align*}
\projdiffperp&: \CinfStwoM\to\diffperp,\\
\projconfdiffperp&: \CinfStwoM\to\confdiffperp
\end{align*}
are 0th order classical polyhomogeneous $\mathit{\Psi}$DOs on $S^2M$, with leading symbols
\beq
\begin{split}
\sigma_L\Big(\projdiffperp\Big)(x,\xi)K&=\Pi_\xi^\perp K\Pi_\xi^\perp,\\
\sigma_L\Big(\projconfdiffperp\Big)(x,\xi)K&=\Pi_\xi^\perp K\Pi_\xi^\perp-\frac{1}{n-1}\tr\big(\Pi_\xi^\perp K\big)\Pi_\xi^\perp.
\end{split}
\eeq
Explicitly the operators are given (up to $\Psi^{-\infty}$) by:
\[
\begin{split}
\projdiffperp&=\Id-\nabla^\odot\Big[\diverg\nabla^\odot\Big]^{-1}\diverg,\\
\projconfdiffperp&=\Big(\Id-\fracsm{1}{n}g_0\cdot\tr\Big)\bigg\{\Id-\nabla^\odot\Big[\big(\diverg-\fracsm{1}{n}d\circ\tr\big)\nabla^\odot\Big]^{-1}\Big(\diverg-\fracsm{1}{n}d\circ\tr\Big)\bigg\}.
\end{split}
\]
\end{proposition}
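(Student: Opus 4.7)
The plan is to realize both projections as orthogonal projections onto the kernel of an appropriate first-order differential operator, via the standard Hilbert-space identity that for $T$ with closed range and invertible $T^*T$, the projection onto $\mathrm{range}(T)^\perp = \ker(T^*)$ equals $\Id - T(T^*T)^{-1}T^*$, and then to show that $T^*T$ is elliptic so the inverse exists modulo $\Psi^{-\infty}$. I first identify the gauge ranges: since $\frac{d}{dt}\big|_{t=0}\varphi_t^* g_0 = \mathcal{L}_X g_0 = 2\nabla^{\odot}X^{\flat}$, we have $\diff = \mathrm{range}(\nabla^{\odot})$ with $\nabla^{\odot}:\Cinf{T^*M}\to\CinfStwoM$ the symmetrized covariant derivative, and $\conf = \mathrm{range}(\varphi\mapsto\varphi g_0)$ by definition. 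Integration by parts yields the $L^2$-adjoints $(\nabla^{\odot})^* = -\diverg$ and $(\varphi\mapsto\varphi g_0)^* = \tr$, so $\diffperp = \ker(\diverg)$ and $(\conf+\diff)^\perp = \ker(\tr)\cap\ker(\diverg)$ as $L^2$-closed subspaces of $\CinfStwoM$.

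For $\projdiffperp$, I take $T = \nabla^{\odot}$. The principal symbols $\sigma_L(\nabla^{\odot})(x,\xi)X = \frac{i}{2}(\xi\otimes X + X\otimes\xi)$ and $\sigma_L(\diverg)(x,\xi)K = i\,\iota_\xi K$ compose to $\sigma_L(-\diverg\nabla^{\odot})(x,\xi) = \frac{1}{2}(|\xi|^2\Id + \xi\otimes\iota_\xi)$, which has eigenvalues $|\xi|^2$ on $\mathbb{R}\xi$ and $\frac{1}{2}|\xi|^2$ on $\xi^{\perp}$. This symbol is positive definite, so the operator is elliptic, and a standard parametrix construction supplies an inverse modulo $\Psi^{-\infty}$, with the finite-dimensional kernel of Killing $1$-forms absorbed into the smoothing error. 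For $\projconfdiffperp$, I exploit the orthogonal decomposition $\conf+\diff = \mathrm{range}(\pi_0\nabla^{\odot})\oplus\conf$ with $\pi_0 := \Id - \frac{1}{n}g_0\cdot\tr$; the summands are automatically $L^2$-perpendicular because trace-free tensors are orthogonal to pure-trace ones. Setting $\widetilde{T} = \pi_0\nabla^{\odot}$ and using $\diverg(\varphi g_0) = d\varphi$ gives $\widetilde{T}^* = -(\diverg - \frac{1}{n}d\circ\tr)$, and using $\tr\pi_0 = 0$ one obtains $\widetilde{T}^*\widetilde{T} = -(\diverg - \frac{1}{n}d\circ\tr)\nabla^{\odot}$. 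The identity $\projconfdiffperp = \pi_0\circ[\Id - \widetilde{T}(\widetilde{T}^*\widetilde{T})^{-1}\widetilde{T}^*]$ then reduces, after pulling $\pi_0^2 = \pi_0$ inside, to the expression in the statement.

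The leading symbols follow by inverting the eigenvalue decomposition on $\mathbb{R}\xi\oplus\xi^{\perp}$ and composing. Taking coordinates with $\xi = e_1$, one checks by direct index computation that $\sigma_L(\nabla^{\odot}[\diverg\nabla^{\odot}]^{-1}\diverg)(\xi)K$ reproduces precisely the first row and column of $K$ with the remaining block zero, so that $\sigma_L(\projdiffperp)(\xi)K = K - (\text{this}) = \Pi_\xi^{\perp}K\Pi_\xi^{\perp}$. For the conformal case, the modified Ahlfors operator $(\diverg - \frac{1}{n}d\circ\tr)\nabla^{\odot}$ has eigenvalues $-\frac{n-1}{n}|\xi|^2$ on $\mathbb{R}\xi$ and $-\frac{1}{2}|\xi|^2$ on $\xi^{\perp}$, and the outer $\pi_0$ produces the extra trace correction $-\frac{1}{n-1}\tr(\Pi_\xi^{\perp}K)\Pi_\xi^{\perp}$, with the factor $\frac{1}{n-1}$ arising as $\tr(\Pi_\xi^{\perp}) = n-1$. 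I expect the main obstacle to be the algebraic bookkeeping for the conformal projection, namely verifying that the several superficially different ways of composing $\pi_0$ with the modified Ahlfors operator all yield the same $\Psi$DO modulo smoothing; once that is straightened out, the symbol calculation reduces to linear algebra on the two-piece splitting of $T_x^*M$.
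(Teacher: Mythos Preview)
Your proposal is correct and follows essentially the same route as the paper: both identify $\diff=\mathrm{range}(\nabla^\odot)$ and $\diffperp=\ker(\diverg)$ (and similarly for the conformal pieces), express the orthogonal projection as $\Id-\nabla^\odot[\diverg\nabla^\odot]^{-1}\diverg$, verify ellipticity of $\diverg\nabla^\odot$ via the eigenvalue splitting on $\mathbb{R}\xi\oplus\xi^{\perp}$, and then read off the leading symbol. Your Hilbert-space projection identity $\Id-T(T^*T)^{-1}T^*$ and the orthogonal decomposition $\conf+\diff=\mathrm{range}(\pi_0\nabla^\odot)\oplus\conf$ make the conformal case more transparent than the paper's ``solve the elliptic system'' phrasing, but the underlying computation is identical.
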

\begin{remark}
Here $\nabla^\odot$ means taking the covariant derivative on 1-forms, followed by symmetrization. The inverses are pseudodifferential parametrices of the following geometric elliptic partial differential operators of order 2:
\[
\begin{split}
\diverg\nabla^\odot&:\CinfStwoM\to\CinfStwoM,\\
\big(\diverg-\fracsm{1}{n}d\circ\tr\big)\nabla^\odot&:\CinfStwoM\to\CinfStwoM.
\end{split}
\]
\end{remark}
\begin{proof}
Let $k\in\CinfStwoM$ be given. By the characterizations
\[
\diff=\Big\{\nabla^\odot\omega\:\Big\vert\:\omega\in\Omega^1M\Big\}\quad\textrm{and}\quad\diffperp=\Big\{k\in\CinfStwoM\:\Big\vert\:\diverg_{g_0}k=0\Big\},
\]
\[
\conf=\Big\{\varphi g_0\:\Big\vert\:\varphi\in\Cinf{M}\Big\}\quad\textrm{and}\quad\conf^\perp=\Big\{k\in\CinfStwoM\:\Big\vert\:\tr_{g_0}k=0\Big\},
\]
projecting on the subspace is equivalent to solving the elliptic system
\beq
\begin{cases}
\nabla^\odot\omega+\big(k-\nabla^\odot\omega\big)=k\\
\diverg_{g_0}k-\diverg_{g_0}\nabla^\odot\omega=0,
\end{cases}
\eeq
for $\omega$, respectively solving the system
\beq
\begin{cases}
\nabla^\odot\omega+\varphi\cdot g_0+(k-\nabla^\odot\omega-\varphi\cdot g_0)=k,\\
\diverg_{g_0}k-\diverg_{g_0}\nabla^\odot\omega-d\varphi=0,\\
\tr_{g_0}k-\tr_{g_0}\nabla^\odot\omega-n\cdot\varphi=0,
\end{cases}
\eeq
for $\omega$ and $\varphi$ (where we have used $\diverg_{g_0}(\varphi\cdot g_0)=d\varphi$).  To see that for instance the differential operator $\diverg_{g_0}\nabla^\odot$ is elliptic, we note that the leading symbol is
\beq
\sigma_L\Big[\diverg_{g_0}\nabla^\odot\Big](x,\xi)=|\xi|^2\big(I+\Pi_\xi\big),
\eeq
and thus it has elliptic (and indeed positive definite) leading symbol. The pseudodifferential parametrix then has leading symbol
\beq
\sigma_L\Big[\big(\diverg_{g_0}\nabla^\odot\big)^{-1}\Big](x,\xi)=\frac{|\xi|^{-2}}{2}\big(I+\Pi_\xi^\perp\big).
\eeq
Some further computations along these lines easily lead to the claimed results.
\end{proof}

\section{Spectral theory for log-polyhomogeneous pseudodifferential operators}\label{logsection}
In order to be able to take the derivative in the $s$-parameter in the classical polyhomogeneous pseudodifferential families studied in the previous sections, one needs for even dimensional manifolds to extend the class to log-polyhomogenous operators, as defined by M. Lesch \cite{Le}.

Firstly we review the log-polyhomogeneous class on a smooth manifold $M^n$ (e.g. \cite{Le}, see also \cite{PS}). Good general references for the pseudodifferential calculus discussed here include \cite{Sh} and \cite{Gi}. As always $H^s(M, E)$ denotes the closure of $\Cinf{M,E}$ with respect to the Sobolev norm corresponding to $s\in\Reals$, where $E$ is the rank $r$ vector bundle in which we work.

The class of order $d$ symbols $\mathrm{S}^d(U,\Reals^r)$, for $d\in\Reals$ and $U\subseteq\Reals^n$ open, are the functions $q(x,\xi)$ in $\Cinf{T^*U,\Endo{\Reals^r}}$, which satisfy the basic estimates
\beq
\Big\vert\partial_{x}^{\alpha}\partial_{\xi}^{\beta}q(x,\xi)\Big\vert\leq
C_{\alpha,\beta,K}(1+|\xi|)^{d-|\beta|},\quad\textrm{for}\:\xi\in\Reals^n,\: x\in K\subseteq U\:\textrm{compact}.
\eeq

The class $\Psi^d(M,E)$ of order $d$ pseudodifferential operators consists of the linear operators
\[
Q:C^\infty(E)\to C^\infty(E)
\]
s.t. in a neighborhood with a trivialization onto $U\times\Reals^r$, for an open set $U\subseteq\Reals^n$,
\beq\label{defPsiDOs}
(Qf)(x)=\int_{\Reals^n}\int_U e^{i(x-y)\cdot\xi}q(x,\xi)f(y)dyd\xi,\quad x\in U,\quad f\in C_c^\infty(U),
\eeq
with $q\in\mathrm{S}^d(U,\Reals^r)$.

The class of classical (1-step) polyhomogeneous symbols $CL^{\alpha}(U,\Reals^r)$ of order $\alpha\in\Complex$ are the functions $q\in\Cinf{T^*U,\Endo{\Reals^r}}$ having a sequence of symbols indexed by $j\geq0$ with $q_{\alpha-j}\in\Cinf{T^*U,\Endo{\Reals^r}}$, each being homogeneous in $\xi$ of degree $\alpha-j$
\[
q_{\alpha-j}(x,t\xi)=t^{\alpha-j}q_{\alpha-j}(x,\xi),\quad\mathrm{for}\quad|t|\geq 1,\quad |\xi|\geq 1
\]
and obeying the asymptotic expansion
\[
q(x,\xi)\sim\sum_{j=0}^{\infty}q_{\alpha-j}(x,\xi)
\]
as $|\xi|\to\infty$, meaning that for each $N\in\Naturals$
\[
q(x,\xi)-\sum_{j=0}^{N-1}q_{d-j}(x,\xi)\in\mathrm{S}^{\Real{\alpha}-N}(U,\Reals^r)
\]

The class of \emph{log-polyhomogeneous} symbols, extending the classical (1-step) polyhomogeneous symbol class, is as follows.

\begin{definition}\label{defPsiDOClass}
For an open set $U\subseteq\Reals^n$, we denote by
$\mathrm{CS}^{d,k}(U,\Reals^r)$ the log-polyhomogeneous symbols of order $d$ and log-degree $k$, being the set of symbols $q\in\bigcap_{\varepsilon>0}\mathrm{S}^{d+\varepsilon}(U,\Reals^r)$, having an asymptotic expansion
\begin{align*}
&q(x,\xi)\sim\sum_{j=0}^{\infty}q_{d-j}(x,\xi),\quad\rm{where}\\
&q_{d-j}(x,\xi)=\sum_{l=0}^kq_{d-j,l}(x,\xi)\log^l[\xi],
\end{align*}
and the $q_{d-j,l}(x,\xi)$ are positively homogeneous of degree $d-j$ in $\xi$.\\
\indent{}
Here we introduced $[\xi]$, which is a strictly positive $C^\infty$ function in $\xi$, which has $[\xi]=|\xi|$ for $|\xi|\geq 1$.
\end{definition}
\begin{remarks}
\begin{itemize}
\item[]
\item[(1)] The asymptotic equivalence in this context means that for each $N\in\Naturals$, the $N$'th difference satisfies
\[
q(x,\xi)-\sum_{j=0}^{N-1}q_{d-j}(x,\xi)\in\bigcap_{\varepsilon>0}\mathrm{S}^{d+\varepsilon-N}(U,\Reals^r).
\]
\item[(2)] The condition on the $q_{d-j}$ means that they belong to the class of (matrix-valued) log-polyhomogeneous functions $\mathcal{P}^{d-j,k}(U,\Reals^r)$.
\item[(3)] An alternative to the use of the modification $[\xi]$, is to multiply each $q_{d-j}$ by a cut-off $\psi\in\Cinf{\Reals^n}$ s.t. $\psi(\xi)=0$ for $|\xi|\leq 1/4$ and $\psi(\xi)=1$ for $|\xi|\geq 1/2$ (as in \cite{Le})
\end{itemize}
\end{remarks}
In the case $k=0$ without any logarithms
\[
\mathrm{CS}^{d,0}(U,\Reals^r)=\mathrm{CS}^d(U,\Reals^r),
\]
the classical 1-step polyhomogeneous symbols.
\begin{definition}
We denote by $\mathrm{CL}^{d,k}(U,\Reals^r)$ the class of pseudodifferential
operators which can be written in the form (\ref{defPsiDOs}) with
symbol $q\in\mathrm{CS}^{d,k}(U,\Reals^r)$.
\end{definition}
It has been verified by M. Lesch (in \cite{Le}) that the usual rules of calculus hold for
properly supported operators, also including change of coordinates by
diffeomorphisms. This as usual allows the definition of the
corresponding classes
$\mathrm{CL}^{d,k}(M)$ on manifolds $M$ as well as
$\mathrm{CL}^{d,k}(M,E)$ for vector bundles $E$. The leading symbol map,
locally $\sigma^d_L(Q):=q_d$, is
well-defined on $\mathrm{CL}^{d,k}(M,E)$ with kernel $\mathrm{CL}^{d-1,k}(M,E)$.

\subsection{Holomorphic families of classical polyhomogeneous $\mathbf{\Psi}$DOs}
In this section we review the topic of holomorphic families of classical
polyhomogeneous pseudodifferential operators with the specific goal of
understanding how the setting in \cite{Ok4} fits with the
log-polyhomogeneous class $\mathrm{CL}^{d,k}(M)$ just defined
above, i.e. we want derivatives of the family to be in this new class. Such issues have been addressed earlier, i.e. see \cite{Le},
\cite{KV}, \cite{PS} and others, who dealt with
somewhat different settings. The exposition here is not meant to be complete, but serves to explain the reduction on the assumptions, in for instance \cite{PS}, to those in the present context.

The definition of holomorphic families of classical $\Psi$DOs will be
as follows in order to match the conditions in \cite{Ok4}.
\begin{definition}\label{OkHoloFam}
If we have a family of $\mathit{\Psi}$DOs
\[
Q_s:\Cinf{E}\to\Cinf{E}
\]
depending on a parameter $s\in\Omega\subseteq\Complex$, then we will
say that $Q_s$ constitutes a
holomorphic family of polyhomogeneous
  $\mathit{\Psi}$DOs if each $Q_s$ is (classical 1-step) polyhomogeneous of order $\mu(s)$, where
\[
\mu:\Omega\to\Complex
\]
is holomorphic, and on any local trivialization of $E$, $Q_s$ has
symbol $q(s,x,\xi)$ which is smooth in $(s,x,\xi)$, holomorphic in s,
and satisfies the estimates
\begin{equation}\label{HoloEstimates}
\bigg\vert\partial_x^\alpha\partial^\beta_\xi\bigg(q(s,x,\xi)-\sum_{j=0}^{N-1}q_{\mu(s)-j}(s,x,\xi)\bigg)\bigg\vert\leq
C_{\alpha,\beta}(1+|\xi|)^{\Real\mu(s)-N-1-|\beta|},
\end{equation}
uniformly for $x$ and $s$ in compact sets.
\end{definition}
Noting the absence of $\partial^\gamma_s$ in the above expression, the symbols in this class may a priori seem to be too weakly estimated
with respect to the needed $s$-derivatives. As is seen from
the proof of the following proposition, the use of Cauchy estimates remedies this.
\begin{proposition}
Let $Q_s$ be a family of polyhomogeneous $\mathit{\Psi}$DOs, which is holomorphic in the
sense of Definition \ref{OkHoloFam}, with corresponding symbols
$q(s,x,\xi)$. Then in a trivialization on $U$:
\begin{itemize}
\item[(1)] Each homogeneous term $q_j(s,x,\xi)$ of $q_s$ is
  holomorphic in $s\in\Omega$, with derivative $\partial_s q_j\in\mathcal{P}^{\mu(s)-j,1}(U)$.
\item[(2)] For each $s\in\Omega$, $\partial_s q$ is in
  $\mathrm{CS}^{\mu(s),1}(U,\Reals^r)$ with symbol expansion
\[
\partial_sq(s,x,\xi)\sim\sum_j\partial_sq_j(s,x,\xi).
\]
\item[(3)] As operators on fixed Sobolev spaces, the family has first derivatives
  $\partial_sQ_s\in\mathrm{CS}^{\mu(s),1}(M,E)$, i.e.
\[
\partial_sQ_s:H^{t}(M,E)\to
H^{t-\Real\mu(s)-\varepsilon}(M,E),\quad\forall\varepsilon>0.
\]
The leading symbol of this operator is the holomorphic derivative of
the family of leading symbols of $Q_s$.
\end{itemize}
\end{proposition}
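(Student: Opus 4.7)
The proof splits cleanly along the three parts, with the essential analytic device being Cauchy's integral formula applied in the parameter $s$, which upgrades uniform holomorphic control on the symbols into pointwise estimates on their $s$-derivatives.

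For part (1), I would decompose each homogeneous piece on $|\xi|\geq 1$ in radial/spherical form as $q_j(s,x,\xi)=|\xi|^{\mu(s)-j}a_j(s,x,\xi/|\xi|)$ with $a_j$ holomorphic in $s$ and smooth on $M\times S^{n-1}$, and smoothly extend using $[\xi]$ near the origin. Differentiating in $s$ yields
\[
\partial_s q_j(s,x,\xi)=\mu'(s)\log[\xi]\cdot q_j(s,x,\xi)+[\xi]^{\mu(s)-j}\partial_s a_j(s,x,\xi/|\xi|),
\]
which is manifestly log-polyhomogeneous of homogeneity degree $\mu(s)-j$ and log-degree at most one, so $\partial_s q_j\in\mathcal{P}^{\mu(s)-j,1}(U)$.

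For part (2), fix $s_0\in\Omega$ and pick a closed disk $\{|s-s_0|\leq r\}\subset\Omega$ of radius small enough that the estimate (\ref{HoloEstimates}) holds uniformly for $s$ on its boundary. Since Cauchy's formula commutes with $\partial_x^\alpha\partial_\xi^\beta$,
\[
\partial_x^\alpha\partial_\xi^\beta\partial_s\Bigl(q-\sum_{j=0}^{N-1}q_j\Bigr)(s_0,x,\xi)=\frac{1}{2\pi i}\oint_{|s-s_0|=r}\frac{\partial_x^\alpha\partial_\xi^\beta\bigl(q-\sum_{j<N}q_j\bigr)(s,x,\xi)}{(s-s_0)^2}\,ds,
\]
and (\ref{HoloEstimates}) majorizes the right-hand side by $C_{\alpha,\beta,r}(1+|\xi|)^{M(r)-N-1-|\beta|}$, where $M(r):=\sup_{|s-s_0|=r}\Real\mu(s)\to\Real\mu(s_0)$ as $r\to 0$ by continuity of $\mu$. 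For any given $\varepsilon>0$, choosing $r$ small enough places this remainder in $\mathrm{S}^{\Real\mu(s_0)+\varepsilon-N}$; intersecting over $\varepsilon$ and using the commutation $\partial_s\sum_{j<N}q_j=\sum_{j<N}\partial_s q_j$ together with part (1) then exhibits $\partial_s q$ with its defining expansion $\sum_j\partial_s q_j$ as an element of $\mathrm{CS}^{\mu(s_0),1}(U,\Reals^r)$.

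Part (3) is then mostly bookkeeping. The diffeomorphism invariance of $\mathrm{CL}^{d,1}$ verified by Lesch globalizes the local statement from (2) to $\partial_s Q_s\in\mathrm{CL}^{\mu(s),1}(M,E)$, and the standard Sobolev mapping property for this class supplies the operator claim, with the $\varepsilon$-loss absorbing the logarithm. By the definition of the leading symbol map on $\mathrm{CL}^{\mu(s),1}$, the leading symbol of $\partial_s Q_s$ equals $\partial_s q_{\mu(s)}$, which is the holomorphic $s$-derivative of the leading symbol of $Q_s$. The one delicate point is to verify that the Cauchy step yields log-degree exactly one and not higher: this is forced by the fact that a single $\partial_s$ acting through $[\xi]^{\mu(s)-j}$ produces a single $\log[\xi]$, while Cauchy's formula for a first derivative introduces no additional log factor in the remainder estimate; higher $s$-derivatives would correspondingly raise the log-degree.
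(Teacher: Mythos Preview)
Your approach is essentially the same as the paper's: Cauchy estimates in the holomorphic parameter $s$ applied to the remainder in (\ref{HoloEstimates}), followed by standard globalization. There is one point in part~(1), however, where you assert something the paper actually proves. You write the homogeneous term as $q_j(s,x,\xi)=|\xi|^{\mu(s)-j}a_j(s,x,\xi/|\xi|)$ ``with $a_j$ holomorphic in $s$'', but Definition~\ref{OkHoloFam} only gives holomorphicity of the \emph{full} symbol $q(s,x,\xi)$; the individual homogeneous components $q_{\mu(s)-j}$ are merely known to exist for each fixed $s$ and to satisfy the uniform remainder estimates. The paper extracts their holomorphicity from the recursive recovery formula
\[
q_{\mu(s)-j}(s,x,\xi)=\lim_{t\to\infty}t^{-(\mu(s)-j)}\Bigl[q(s,x,t\xi)-\sum_{k<j}q_{\mu(s)-k}(s,x,t\xi)\Bigr],
\]
noting that by (\ref{HoloEstimates}) the convergence is uniform on compacta in $(x,\xi,s)$, so the limit of holomorphic functions is holomorphic. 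Once this is in place your radial/spherical decomposition and the rest of your argument go through exactly as written; the computation of $\partial_s q_j$ and the Cauchy-formula bound on the $N$th remainder match the paper's proof line for line.
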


\begin{proof}
To see the first assertion we apply the recursive recovery formulae
for classical polyhomogeneous $\Psi$DOs
\[
q_{d-j}(s,x,\xi)=\lim_{t\to\infty}t^{-(d-j)}\bigg[q(s,x,t\xi)-\sum_{k=0}^{j-1}q_{d-k}(s,x,t\xi)\bigg].
\]
Using the estimates (\ref{HoloEstimates}) the convergence here is seen
to be uniform for $x$, $\xi$ and $s$ in compact sets. This implies
inductively the analyticity of the $q_j$ terms. Using the homogeneity we
find that $\partial_s q_j(s,x,\xi)$ equals
\[
\Big[\partial_s
q_j\Big(s,x,\frac{\xi}{|\xi|}\Big)\Big]|\xi|^{\mu(s)-j}+\Big[(\partial_s\mu)q_j\Big(s,x,\frac{\xi}{|\xi|}\Big)\Big]|\xi|^{\mu(s)-j}\log|\xi|\in\mathcal{P}^{\mu(s)-j,1}.
\]

To check that $\partial_sq\in\mathrm{S^{\mu(s)+\varepsilon}}$ for
all $\varepsilon >0$, we apply Cauchy estimates, which for holomorphic
$f$ gives
\[
\big\vert
f^{(n)}(s)\big\vert\leq\frac{n!\sup_{|w-s_0|=\varepsilon}f(w)}{\varepsilon^n},\quad\textrm{if
}|s-s_0|<\varepsilon.
\]
With this and the locally uniform estimates (\ref{HoloEstimates}), we get that
\begin{equation}
\Big\vert\partial_x^\alpha\partial^\beta_\xi \partial_sq(s,x,\xi)\Big\vert\leq
C_{\alpha,\beta,\varepsilon}(1+|\xi|)^{\Real\mu(s)+\varepsilon-|\beta|},
\end{equation}
uniformly for $x$ and $s$ in compact sets, for any $\varepsilon>0$.

To verify the asymptotic expansion we use the definition of
polyhomogeneity and continuity in $s$ to obtain estimates for each $N$
and $\varepsilon >0$
\[
\bigg\vert\partial_x^\alpha\partial_\xi^\beta\Big\{q(s,x,\xi)-\sum_{j=0}^{N-1}q_j(s,x,\xi)\Big\}\bigg\vert\leq
C_{\alpha,\beta,N,\varepsilon}(1+|\xi|)^{\Real\mu(s)+\varepsilon-N-|\beta|}
\]
uniformly for $x$ and $s$ in compact sets. Applying again Cauchy
estimates shows that the $N$'th difference
\[
\partial_sq(s,x,\xi)-\sum_{j=0}^{N-1}\psi(\xi)\partial_sq_j(s,x,\xi)\in\mathrm{S}^{\Real\mu(s)+\varepsilon-N},
\]
for any $\varepsilon >0$, and this was precisely the claim. Standard arguments now show that the considerations for the symbols of the family lead to (3) in the proposition.
\end{proof}

\begin{remark}
It is implicit to this discussion that we use a multiplicative cut-off (in $\xi$, as in Definition \ref{defPsiDOClass}) on $q(s,x,\xi)$, which amounts to changing $q_s$ by a family of smoothing symbols.

By (\ref{HoloEstimates}) this modifies the family $\partial_sQ_s$ by a holomorphic family of smoothing operators. As is easily seen, the properties of having compact resolvent and semi-bounded $L^2$-spectrum are thus unchanged. Consequently the cut-off will not affect the spectral results given in the main Theorem \ref{spectrum} below.
\end{remark}

\subsection{Ellipticity, positivity and spectrum of log-polyhomogeneous $\mathbf{\Psi}$DOs}\label{EllPos}
The usual notion of ellipticity for
pseudodifferential operators is quite strong and \textit{does not} carry over naturally to the
log-polyhomogeneous classes $\mathrm{CL}^{d,k}(M,E)$. The reason for this is that the parametrix will
generally only be of class $\mathrm{CL}^{-d,k}(M,E)$ and \textit{not} in $\mathrm{CL}^{-d-\varepsilon,k}(M,E)$, whenever $\varepsilon>0$. The following weaker notion of hypoellipticity is more adequate. We will not need the general classes $\mathrm{HS}^{d,d_0}_{\rho,\delta}(U,\Reals^r)$ and describe here only what corresponds to $(\rho,\delta)=(1,0)$.

\begin{definition}[\cite{Sh}, 5.1]\label{defhypo}
For real numbers $d_0\leq d$, the subclass $\mathrm{HS}^{d,d_0}(U,\Reals^r)$ of hypoelliptic symbols is the set of (matrix-valued) functions
\[
q(x,\xi)\in C^\infty(U,\Endo{\Reals^r}),
\]
where $U\subseteq\Reals^n$ is an open set, such that
\begin{itemize}
\item[(1)] For any compact $K\subseteq U$ there exist constants $R$,
  $C_1$ and $C_2$ such that
\[
C_1|\xi|^{d_0}\leq|q(x,\xi)|\leq C_2|\xi|^d,\qquad |\xi|\geq R,\quad
x\in K.
\]
\item[(2)] For each compact set $K\subseteq U$ there exists a constant $R$ such that
\[
\Big\vert q^{-1}(x,\xi)\big[\partial_{\xi}^{\alpha}\partial_{x}^{\beta}q(x,\xi)\big]\Big\vert\leq C_{\alpha,\beta,K}|\xi|^{-|\alpha|},\qquad |\xi|\geq R,\quad
x\in K,
\]
for suitable $C_{\alpha,\beta, K}$.
\end{itemize}
\end{definition}
An account of the standard results on these symbols can be found in \cite[5.1]{Sh}. First of all (since $(\rho,\delta)=(1,0)$ implies $1-\rho\leq\delta<\rho$) the notion extends to (vector bundles over) manifolds. We have indeed $\mathrm{HS}^{d,d_0}(M,E)\subseteq\mathrm{S}^{d}(M,E)$, and we note that hypoellipticity is determined by the leading symbol in $\mathrm{S}^{d}(M,E)$. We see that operators in $\mathrm{HS}^{d,d_0}(M,E)$ map Sobolev spaces $\mathrm{H}(M,E)^{s}\to\mathrm{H}(M,E)^{s-d}$, for any $s\in\Reals$. A crucial feature of the hypoelliptic class is the existence of a parametrix, as follows.
\begin{theorem}[\cite{Sh}, Thm. 5.1]\label{parametrix}
Let $Q\in\mathrm{HS}^{d,d_0}(M,E)$. Then there exists an operator $P\in\mathrm{HS}^{-d_0,-d}(M,E)$ such that
\[
QP=I+R_1,\quad PQ=I+R_2,
\]
where $R_1, R_2\in\Psi^{-\infty}$.
\end{theorem}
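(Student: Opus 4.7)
The plan is to produce $P$ by the usual symbolic parametrix iteration, but with all estimates reworked in the asymmetric bi-degree $(d, d_0)$ of Definition \ref{defhypo}. I would work locally in a chart $U \subseteq \Reals^n$ with a trivialization of $E$, construct a local symbol $p(x,\xi) \in \mathrm{HS}^{-d_0,-d}(U,\Reals^r)$, and then globalize by a standard partition of unity argument together with the coordinate-invariance of the class. Hence the heart of the proof is the construction over $U$.

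First I would invert the leading part. From Definition \ref{defhypo}(1), $q(x,\xi)$ is pointwise invertible for $|\xi| \geq R$ on compacts, and on this region $|q^{-1}(x,\xi)| \leq C |\xi|^{-d_0}$. I would set $p_0(x,\xi) := \chi(\xi) q(x,\xi)^{-1}$, where $\chi \in \Cinf{\Reals^n}$ is a cutoff vanishing on $\{|\xi| \leq R\}$ and equal to $1$ on $\{|\xi| \geq 2R\}$. The key symbolic claim is that $p_0 \in \mathrm{HS}^{-d_0, -d}(U, \Reals^r)$. The lower/upper bound of Definition \ref{defhypo}(1) for $p_0$ is immediate from the corresponding bound for $q$ by taking reciprocals. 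For the derivative estimate of Definition \ref{defhypo}(2), I would argue by induction on $|\alpha| + |\beta|$ using the Leibniz rule applied to the identity $q \cdot q^{-1} = I$: differentiating once gives $\partial q^{-1} = -q^{-1} (\partial q) q^{-1}$, and each subsequent derivative produces sums of products of $q^{-1}$ with derivatives of $q$, each of which is controlled by hypothesis (2) for $q$. The bookkeeping yields
\[
|\partial_\xi^\alpha \partial_x^\beta p_0(x,\xi)| \leq C_{\alpha,\beta,K} |\xi|^{-d_0 - |\alpha|}, \qquad |\xi| \geq 2R,\ x \in K,
\]
and a parallel estimate on $p_0^{-1} \partial_\xi^\alpha \partial_x^\beta p_0$, verifying $p_0 \in \mathrm{HS}^{-d_0,-d}$.

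Next I would carry out the parametrix iteration. Writing the composition asymptotic expansion
\[
\sigma(QP_0)(x,\xi) \sim \sum_{\gamma} \frac{(-i)^{|\gamma|}}{\gamma!} \partial_\xi^\gamma q(x,\xi) \, \partial_x^\gamma p_0(x,\xi) = I + r_1(x,\xi),
\]
the leading term is $I$ and the remainder $r_1$ gains one order in $\xi$ per term via the estimates above; the telescoping argument shows $r_1 \in \mathrm{S}^{d_0 - d - 1} \cdot |\xi|^{d - d_0}$ type control, so in particular $r_1$ can be absorbed by setting $p_1 := -p_0 \# r_1$ and proceeding recursively. I would produce a sequence $p_j$ with $p_j$ of order $-d_0 - j (d - d_0) \cdot \text{(appropriate)}$, use Borel summation to pick $p \sim \sum_j p_j$ in the symbol class, and check that $p \in \mathrm{HS}^{-d_0,-d}$ because $p_0$ already is and the lower-order corrections do not spoil the defining bounds of Definition \ref{defhypo} for large $|\xi|$. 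Then $Q P = I + R_1$ with $R_1 \in \Psi^{-\infty}$ by construction.

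Finally, the left parametrix is obtained by the symmetric construction (inverting $q$ from the left in the adjoint composition), yielding $P' \in \mathrm{HS}^{-d_0,-d}$ with $P' Q = I + R_2'$, $R_2' \in \Psi^{-\infty}$. A standard argument ($P - P' = P'(I + R_1) - (I + R_2') P \in \Psi^{-\infty}$) shows the two-sided parametrix can be taken to be the same $P$, concluding the proof.

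The main obstacle I anticipate is the verification that $p_0 = q^{-1}$ (and then the iterates $p_j$) actually lies in $\mathrm{HS}^{-d_0,-d}$ rather than just in the weaker class $\mathrm{S}^{-d_0}$: the asymmetry between $d$ and $d_0$ means the derivative gain per $\partial_\xi$ is $1$, but one must check this is consistent with the normalization $q^{-1} \partial^\alpha \partial^\beta q^{-1}$ decaying at the right rate. This is ultimately a careful Leibniz-rule induction exploiting Definition \ref{defhypo}(2) for $q$ itself, together with the fact that multiplying by $q \cdot q^{-1} = I$ lets one trade factors of $q^{-1}$ for factors involving $q$, and it is the point at which the precise form of the hypoellipticity definition is used.
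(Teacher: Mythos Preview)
The paper does not prove this statement: Theorem~\ref{parametrix} is quoted directly from Shubin \cite[Thm.~5.1]{Sh} without argument, and serves in the paper only as input to the G\aa{}rding-type inequality of Corollary~\ref{Gaarding}. There is thus no proof in the paper to compare against.

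That said, your plan is precisely the standard construction carried out in Shubin's book, and is correct in outline: invert the full symbol pointwise for large $|\xi|$, verify via a Leibniz-rule induction on $q\,q^{-1}=I$ that $p_0=\chi\,q^{-1}$ satisfies the hypoelliptic estimates of Definition~\ref{defhypo} with swapped bi-degree $(-d_0,-d)$, run the asymptotic Neumann iteration, Borel-sum the corrections, and finally match left and right parametrices modulo $\Psi^{-\infty}$. Your identification of the main technical point---that $p_0$ actually lies in $\mathrm{HS}^{-d_0,-d}$ rather than merely $\mathrm{S}^{-d_0}$---is exactly the step Shubin isolates. One place your write-up is imprecise is the order of the first remainder $r_1$: the notation ``$\mathrm{S}^{d_0-d-1}\cdot|\xi|^{d-d_0}$'' is garbled, and the clean claim is simply $r_1\in\mathrm{S}^{-1}$. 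To see this one should not estimate $|\partial_\xi^\gamma q|$ and $|D_x^\gamma p_0|$ separately (which would cost a factor $|\xi|^{d-d_0}$ from the condition number) but instead group the product so that the bounded quantities $q^{-1}\partial_\xi^\gamma q$ and $q\,D_x^\gamma q^{-1}$ from condition~(2) appear directly. Once $r_1\in\mathrm{S}^{-1}$ is established the iteration gains one full order per step regardless of the gap $d-d_0$, and the rest of your outline goes through.
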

From this we may immediately observe that the hypoelliptic class has a corresponding G\aa{}rding type inequality.
\begin{corollary}[A hypoelliptic G\aa{}rding inequality]\label{Gaarding}
If $Q\in\mathrm{HS}^{d,d_0}(M,E)$ then there exists a constant $C_{Q,d,d_0}$ such that
\[
\|f\|_{d_0}\leq C_{Q,d,d_0}\big(\|f\|_0+\|Q f\|_0\big),\quad\textrm{for}\quad f\in\mathrm{H}^{d_0}(M,E).
\]
\end{corollary}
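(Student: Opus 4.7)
The plan is to reduce everything to the existence of a (two-sided) parametrix for hypoelliptic operators, which is exactly the content of Theorem \ref{parametrix}, and then to exploit the Sobolev mapping properties that are already recorded in Section \ref{EllPos}.

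Concretely, first I would invoke Theorem \ref{parametrix} to produce an operator $P \in \mathrm{HS}^{-d_0,-d}(M,E)$ with
\[
PQ = I + R_2,\qquad R_2 \in \Psi^{-\infty}.
\]
Rearranging, $f = PQf - R_2 f$ for every $f\in H^{d_0}(M,E)$ (with the right-hand side interpreted distributionally whenever $Qf$ is not a priori in $L^2$; if $\|Qf\|_0=\infty$ the inequality is vacuous, so one may assume $Qf\in L^2$). Taking the $H^{d_0}$-norm of both sides and applying the triangle inequality gives
\[
\|f\|_{d_0} \;\le\; \|PQf\|_{d_0} + \|R_2 f\|_{d_0}.
\]

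The two terms on the right are then controlled by the standard continuity of pseudodifferential operators on Sobolev scales, as recorded before Theorem \ref{parametrix}. Since $P\in\mathrm{HS}^{-d_0,-d}(M,E)$ has order $-d_0$, it extends to a bounded operator
\[
P:H^0(M,E)\longrightarrow H^{d_0}(M,E),
\]
so $\|PQf\|_{d_0}\le C_P\|Qf\|_0$. Since $R_2\in\Psi^{-\infty}$ is smoothing, it is bounded from $H^0(M,E)$ into $H^{d_0}(M,E)$ (indeed into every Sobolev space), giving $\|R_2 f\|_{d_0}\le C_{R_2}\|f\|_0$. Combining the two estimates yields
\[
\|f\|_{d_0} \le C_P\|Qf\|_0 + C_{R_2}\|f\|_0 \le C_{Q,d,d_0}\bigl(\|f\|_0 + \|Qf\|_0\bigr),
\]
with $C_{Q,d,d_0}:=\max\{C_P,C_{R_2}\}$, which is the claim.

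There is no real obstacle here beyond bookkeeping of orders; the entire substance of the inequality is absorbed into Theorem \ref{parametrix}, so the only thing to be careful about is the asymmetry of the two indices in $\mathrm{HS}^{d,d_0}$, namely that the parametrix has orders $(-d_0,-d)$ rather than $(-d,-d_0)$, which is precisely what produces the $H^{d_0}$-gain on the left-hand side rather than the weaker $H^d$-gain one would obtain in the fully elliptic case $d_0=d$.
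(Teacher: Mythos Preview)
Your proof is correct and follows essentially the same approach as the paper: invoke the parametrix from Theorem \ref{parametrix}, write $f = PQf - R_2f$, and use Sobolev boundedness of $P$ (order $-d_0$) and of the smoothing remainder $R_2$. Your version is in fact slightly cleaner than the paper's, which has a minor typo (it writes $R_1$ where $R_2$ is meant, and passes through the intermediate norm $\|Qf\|_{d_0-d}$ rather than going directly to $\|Qf\|_0$).
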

\begin{proof}
Let $f\in\mathrm{H}^{d_0}(M,E)$, apply Theorem \ref{parametrix} and use boundedness between Sobolev spaces to estimate as follows
\begin{align*}
\|f\|_{d_0}&=\|PQf\|_{d_0}+\|R_1f\|_{d_0}\\
&\leq C_Q\|Qf\|_{d_0-d}+C_{Q}'\|f\|_0\\
&\leq C_{Q,d,d_0}\big(\|f\|_0+\|Qf\|_0\big).
\end{align*}
\end{proof}

We can now define the correct generalizations of the notions of ellipticity and positive symbol to log-homogeneous operators.

\begin{definitions}
\begin{itemize}
\item[]
\item[(1)]The class of hypoelliptic symbols with positive leading symbols, denoted
\[
\mathrm{HS}_+^{d,d_0}(M,E)
\]
are those symbols in $\mathrm{HS}^{d,d_0}(M,E)$ having a leading symbol representative $\sigma_L^d$ and a constant $R$ such that
\[
\sigma_L^d(x,\xi)\textrm{ is strictly positive in }\big(\Endo{E_x},\langle\cdot,\cdot\rangle_x\big)\quad\textrm{for}\quad|\xi|\geq R.
\]
\item[(2)]The class of log-polyhomogeneous symbols that are hypoelliptic with positive leading symbols is
\[
\mathrm{HCL}_+^{d,k}(M,E):=\mathrm{CL}^{d,k}(M,E)\cap\bigcap_{\varepsilon>0}\mathrm{HS}_+^{d+\varepsilon,d}
\]
\end{itemize}
\end{definitions}
\begin{remark}
As mentioned in the introduction there is an important point here, in that the leading symbol of a log-polyhomogeneous operator includes every log-power of terms with highest $|\xi|$-power. Hence the notion of positive leading symbol \textit{does not} imply hypoellipticity. Namely the term with the highest log-degree may still be singular (while not negative), which is the case in the application we have in mind.
\end{remark}
The basic spectral results needed for dealing with the stability operators (i.e. $L^2$-Hessians) encountered here, are the
following results, quite analoguous to for instance Lemmas 1.6.3-1.6.4 in Gilkey's book \cite{Gi}, stated there only for partial differential operators with smooth coefficients.
\begin{theorem}\label{spectrum}
On a closed manifold $M$, suppose $Q\in\mathrm{HS}_+^{d,d_0}(M,E)$ with $0<d_0\leq d<d_0+1$ is
symmetric on the domain $\Cinf{M,E}$. Then $Q$ has a real discrete
spectrum, consisting of finite multiplicity eigenvalues $\{\lambda_k\}$, with $|\lambda_k|\to\infty$, and which is bounded from below, i.e. there exists a constant $c>0$ such
that $\mathrm{spec}(Q)\subseteq [-c,\infty)$.
\end{theorem}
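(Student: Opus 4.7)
The plan is to follow the standard template for the spectral theory of elliptic self-adjoint operators, adapting each step to the hypoelliptic log-polyhomogeneous setting via the square root lemma (Lemma \ref{squareroot}) and the hypoelliptic G\aa{}rding inequality (Corollary \ref{Gaarding}). First I would apply Lemma \ref{squareroot} to obtain a symmetric square root $A\in\mathrm{HS}^{d/2,d_{0}/2}(M,E)$ with $A^{2}=Q$ modulo a smoothing remainder $R\in\Psi^{-\infty}(M,E)$. One then has
\[
\DIP{Qf}{f}_{g}=\|Af\|_{0}^{2}+\DIP{Rf}{f}_{g}\geq -\|R\|_{L^{2}\to L^{2}}\,\|f\|_{0}^{2},\quad f\in\Cinf{M,E},
\]
which already yields the semi-boundedness $\spec(Q)\subseteq[-c,\infty)$ claimed in the theorem and makes $Q+(c+1)I$ strictly positive on $\Cinf{M,E}$.

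Next, since $Q$ is symmetric and semi-bounded on the dense subspace $\Cinf{M,E}\subseteq L^{2}(M,E)$, the Friedrichs construction produces a self-adjoint extension $\widetilde{Q}$. To establish discreteness of the spectrum I would show that its resolvent is compact: setting $\widetilde{Q}_{+}:=\widetilde{Q}+(c+1)I$, this operator is self-adjoint and boundedly invertible on $L^{2}(M,E)$, and the G\aa{}rding inequality of Corollary \ref{Gaarding} together with the parametrix of Theorem \ref{parametrix} gives
\[
\|f\|_{d_{0}}\leq C\big(\|f\|_{0}+\|\widetilde{Q}_{+}f\|_{0}\big)\leq C'\|\widetilde{Q}_{+}f\|_{0},\quad f\in\mathrm{Dom}(\widetilde{Q}),
\]
so $\widetilde{Q}_{+}^{-1}:L^{2}(M,E)\to H^{d_{0}}(M,E)$ is bounded. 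Since $d_{0}>0$ and $M$ is closed, Rellich's theorem makes the embedding $H^{d_{0}}(M,E)\hookrightarrow L^{2}(M,E)$ compact, so $\widetilde{Q}_{+}^{-1}$ is a compact self-adjoint operator on $L^{2}(M,E)$. The spectral theorem then yields an orthonormal basis of eigenvectors with real finite-multiplicity eigenvalues accumulating only at zero, and inverting and shifting gives the real discrete eigenvalue spectrum $\{\lambda_{k}\}$ of $Q$ with $|\lambda_{k}|\to\infty$, all contained in $[-c,\infty)$.

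The step I expect to require the most care is the identification of $\mathrm{Dom}(\widetilde{Q})$ with a concrete Sobolev regularity class: the abstract form domain produced by Friedrichs is a priori only defined via completion in an energy norm, and one must use the parametrix of Theorem \ref{parametrix} (writing $f=PQf-R_{2}f$ with $P\in\mathrm{HS}^{-d_{0},-d}(M,E)$ mapping $L^{2}\to H^{d_{0}}$ and $R_{2}\in\Psi^{-\infty}$ smoothing) to verify that any $f\in L^{2}$ with $\widetilde{Q}_{+}f\in L^{2}$ actually lies in $H^{d_{0}}$. The bi-order condition $0<d_{0}\leq d<d_{0}+1$ is precisely what keeps these compositions inside the hypoelliptic calculus with the mapping properties used above, and ensures that the logarithmic contributions to the top symbol of $Q$ neither obstruct the square root construction of Lemma \ref{squareroot} nor degrade the regularity gain enough to break the Rellich compactness step.
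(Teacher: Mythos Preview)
Your semi-boundedness argument has a genuine gap. You claim that Lemma \ref{squareroot} produces a symmetric $A\in\mathrm{HS}^{d/2,d_0/2}$ with $A^2=Q$ modulo a \emph{smoothing} remainder $R\in\Psi^{-\infty}$, so that $\DIP{Qf}{f}=\|Af\|_0^2+\DIP{Rf}{f}$ and semi-boundedness is immediate. But Lemma \ref{squareroot} does not say this: by its very statement, $\sqrt{\cdot}$ means constructing an operator whose \emph{leading symbol} is the square root of the leading symbol of $Q$. Thus if $A_0$ is this approximate square root and $A:=A_0^*A_0$, one only has $Q-A\in\mathrm{S}^{d-1}$, not $\Psi^{-\infty}$. (Also, $A_0$ need not be symmetric; the paper symmetrizes by forming $A_0^*A_0$.) In addition, the lemma requires $d_0\geq 1$, which is why the paper first reduces to $d_0>1$ by replacing $Q$ with an integer power.

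The real work, which your argument bypasses, is controlling the order $d-1$ remainder $\langle(Q-A)f,f\rangle_0$. The paper estimates this by $C\|f\|_{d_0/2}\|f\|_{d-1-d_0/2}$ and then uses Sobolev interpolation between $H^0$ and $H^{d_0/2}$, which is legitimate precisely because the hypothesis $d<d_0+1$ forces $d-1-d_0/2<d_0/2$. This is the actual role of the bi-order condition $d<d_0+1$; it is not, as you suggest, a matter of keeping compositions inside the calculus. Combining the interpolation with the G\aa{}rding inequality applied to $A_0$ then absorbs the remainder into $\|A_0f\|_0^2$ up to a $\|f\|_0^2$ term, yielding $\langle Qf,f\rangle_0\geq -C\|f\|_0^2$. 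Your compact-resolvent/Rellich argument for discreteness is fine and matches the paper.
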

\begin{corollary}
Theorem \ref{spectrum} applies to $\mathrm{HCL}_+^{d,k}(M,E)$ with $d>0$ by fixing an arbitrary $0<\varepsilon<1$ and taking $(d+\varepsilon,d)$ as the bi-degree. In particular operators in this class can have at most a finite number of negative eigenvalues.
\end{corollary}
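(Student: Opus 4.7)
The plan is to reduce the statement directly to Theorem \ref{spectrum} by exhibiting a suitable bi-degree. Given $Q\in\mathrm{HCL}_+^{d,k}(M,E)$ with $d>0$, I would first unpack the definition: by construction $Q$ lies in the intersection $\bigcap_{\varepsilon>0}\mathrm{HS}_+^{d+\varepsilon,d}(M,E)$, so for any fixed choice of $0<\varepsilon<1$ the operator $Q$ already belongs to $\mathrm{HS}_+^{d+\varepsilon,d}(M,E)$, with the same positive leading symbol representative.

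Next I would verify that the pair $(d+\varepsilon,d)$ satisfies the hypothesis $0<d_0\leq d<d_0+1$ of Theorem \ref{spectrum}, read with $d_0:=d$ and upper degree $d+\varepsilon$. This amounts to $0<d\leq d+\varepsilon<d+1$, which is immediate from $d>0$ and the choice $0<\varepsilon<1$. Symmetry of $Q$ on $\Cinf{M,E}$ is carried over as a standing hypothesis from Theorem \ref{spectrum}. Applying that theorem then produces a real discrete spectrum of finite-multiplicity eigenvalues $\{\lambda_k\}$ with $|\lambda_k|\to\infty$ and $\spec(Q)\subseteq[-c,\infty)$ for some $c>0$.

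The final assertion of the corollary follows purely from this spectral picture: since $\spec(Q)\subseteq[-c,\infty)$ and its only possible accumulation point is $+\infty$, the subset $\spec(Q)\cap[-c,0)$ is a bounded sequence in $\Reals$ without accumulation, hence finite; each eigenspace being finite-dimensional, the total number of negative eigenvalues counted with multiplicity is finite as well. I do not anticipate any genuine obstacle in this argument, as the only substantive content is the observation already embedded in the definition of $\mathrm{HCL}_+^{d,k}(M,E)$, namely that the logarithmic factors $\log^l[\xi]$ are of size $O(|\xi|^\varepsilon)$ for every $\varepsilon>0$, and therefore can be absorbed into the upper bi-degree $d+\varepsilon$ without disturbing the lower bi-degree $d$ that governs the G\aa{}rding-type estimate in Corollary \ref{Gaarding} used to prove Theorem \ref{spectrum}.
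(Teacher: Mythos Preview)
Your proposal is correct and is precisely the argument the paper has in mind; indeed the paper states the corollary without a separate proof, since the claim is immediate from the definition $\mathrm{HCL}_+^{d,k}(M,E)=\mathrm{CL}^{d,k}(M,E)\cap\bigcap_{\varepsilon>0}\mathrm{HS}_+^{d+\varepsilon,d}$ together with the numerical check $0<d\leq d+\varepsilon<d+1$.
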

Before proving this theorem, there is a small interlude to show that we can take square roots of positive leading symbol hypoelliptics, as needed below in the proof of Theorem \ref{spectrum}
\begin{lemma}[Square root lemma]\label{squareroot}
Let $d_0\geq 1$ and fix the notation that $\sqrt{\cdot}$ means using a partition of unity to construct an operator with leading symbol being the square root of the original leading symbol. Then we have
\beq
\sqrt{\mathrm{HS}_+^{d,d_0}(M,E)}\subseteq\mathrm{HS}^{\frac{d}{2},\frac{d_0}{2}}(M,E)
\eeq
\end{lemma}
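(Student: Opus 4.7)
The plan is to exploit the fact, noted in the paper immediately after Definition \ref{defhypo}, that hypoellipticity is determined by the leading symbol. So the task reduces to constructing, in each coordinate chart (with a trivialization of $E$), a matrix-valued symbol $p(x,\xi)$ whose principal part is the positive-definite matrix square root of $\sigma_L^d(x,\xi)$, and then verifying that $p$ satisfies Definition \ref{defhypo}(1)-(2) with bi-degree $(d/2,d_0/2)$. A global object $\sqrt{Q}$ is then obtained by gluing via a partition of unity on $M$, with overlaps contributing only $\Psi^{-\infty}$ corrections and so not affecting the hypoelliptic class.

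For the local construction, I would use the holomorphic functional calculus. With $A(x,\xi):=\sigma_L^d(x,\xi)$, which is strictly positive-definite for $|\xi|\geq R$ by hypothesis, and with a cutoff $\chi(\xi)$ equal to $1$ on $|\xi|\geq 2R$ and vanishing on $|\xi|\leq R$, set
\[
p(x,\xi):=\chi(\xi)\cdot\frac{1}{2\pi i}\oint_{\Gamma(\xi)}z^{1/2}\bigl(z-A(x,\xi)\bigr)^{-1}\,dz,
\]
where $\Gamma(\xi)$ is a positively oriented contour in $\Complex\setminus(-\infty,0]$ enclosing $\mathrm{spec}\,A(x,\xi)\subseteq[c_1|\xi|^{d_0},c_2|\xi|^d]$ and scaled with $|\xi|$ so as to stay uniformly separated from this spectrum. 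The size bound of Definition \ref{defhypo}(1) is then immediate from the spectral mapping theorem: the eigenvalues of $p(x,\xi)$ are $\mu_j^{1/2}$, where $\mu_j$ are those of $A(x,\xi)$, and so fall in the interval $[\sqrt{c_1}|\xi|^{d_0/2},\sqrt{c_2}|\xi|^{d/2}]$.

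The main obstacle I expect is verifying the symbol-derivative estimate of Definition \ref{defhypo}(2). Differentiating the contour formula under the integral with the resolvent identity $\partial(z-A)^{-1}=(z-A)^{-1}(\partial A)(z-A)^{-1}$ writes $\partial_\xi^\alpha\partial_x^\beta p$ as a finite sum of terms of the form
\[
\oint_{\Gamma(\xi)}z^{1/2}\,(z-A)^{-1}\,\partial^{(\alpha_1,\beta_1)}A\,(z-A)^{-1}\cdots\partial^{(\alpha_k,\beta_k)}A\,(z-A)^{-1}\,dz,
\]
with $\sum\alpha_j=\alpha$ and $\sum\beta_j=\beta$. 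On $\Gamma(\xi)$, parametrized at scale $|\xi|^{d_0}$ with total length of the same order, one has $|z^{1/2}|\lesssim|\xi|^{d/2}$ and $\|(z-A)^{-1}\|\lesssim|\xi|^{-d_0}$ (the latter by the hypoellipticity of $Q$), while each factor $\partial^{(\alpha_j,\beta_j)}A$ is of size $|\xi|^{d-|\alpha_j|}$. Careful bookkeeping of the scalings yields $|\partial_\xi^\alpha\partial_x^\beta p|\lesssim|\xi|^{d/2-|\alpha|}$; combined with the lower spectral bound $|p^{-1}|\lesssim|\xi|^{-d_0/2}$ from the previous paragraph, the required estimate $|p^{-1}\partial_\xi^\alpha\partial_x^\beta p|\lesssim|\xi|^{-|\alpha|}$ then emerges. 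The hypothesis $d_0\geq 1$ is used precisely here to keep $\Gamma(\xi)$ firmly in the right half-plane, at a distance from the origin that grows with $|\xi|$, so that $z^{1/2}$ is smooth on and inside $\Gamma(\xi)$ and the inductive bookkeeping closes.
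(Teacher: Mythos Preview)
Your contour-integral construction of $p=A^{1/2}$ is a reasonable route, and Property~(1) does follow as you say. But the verification of Property~(2) has a genuine gap at the final step. You obtain $|\partial_\xi^\alpha\partial_x^\beta p|\lesssim|\xi|^{d/2-|\alpha|}$ and $|p^{-1}|\lesssim|\xi|^{-d_0/2}$, and then assert that multiplying these yields $|p^{-1}\partial_\xi^\alpha\partial_x^\beta p|\lesssim|\xi|^{-|\alpha|}$. In fact the product only gives $|\xi|^{(d-d_0)/2-|\alpha|}$, which is strictly worse than required whenever $d>d_0$---and that is precisely the hypoelliptic (as opposed to elliptic) regime the lemma is meant to cover. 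So as written your argument proves only the case $d=d_0$. There is also a smaller inconsistency earlier: a contour enclosing $\mathrm{spec}\,A\subseteq[c_1|\xi|^{d_0},c_2|\xi|^d]$ cannot have length of order $|\xi|^{d_0}$; it must have length at least of order $|\xi|^d$, and with the crude resolvent bound $\|(z-A)^{-1}\|\lesssim|\xi|^{-d_0}$ the bookkeeping does not close even for $|\partial p|$ itself. Finally, your explanation of where $d_0\geq1$ enters (keeping $\Gamma$ away from the branch cut) would work with any $d_0>0$; in the paper the hypothesis $d_0\geq 1$ is used for a different reason.

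The paper takes a more algebraic route: it writes $\big(\partial_x^\beta\partial_\xi^\alpha\sigma_L^{1/2}\big)\sigma_L^{-1/2}$ directly as a Fa\`a di Bruno--type combination built from the blocks $\big(\partial_\xi^\gamma\partial_x^\delta\sigma_L\big)\sigma_L^{-1}$ (each bounded by $|\xi|^{-|\gamma|}$ by hypothesis~(2) on $\sigma_L$) together with extra negative powers of $\sigma_L$; the condition $d_0\geq 1$ is then exactly what makes each such extra factor $|\sigma_L^{-1}|\lesssim|\xi|^{-d_0}\leq|\xi|^{-1}$ absorb the derivative it replaces, so the total $\xi$-power count comes out to $-|\alpha|$. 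If you want to salvage your contour approach, the analogous move is to group each resolvent with a neighbouring $\partial A$ as $(z-A)^{-1}(\partial A)=(z-A)^{-1}A\cdot A^{-1}(\partial A)$ and use $\|(z-A)^{-1}A\|=O(1)$ on a sectorial contour together with the hypoellipticity bound $\|A^{-1}\partial A\|\lesssim|\xi|^{-|\alpha_j|}$, rather than estimating $|p^{-1}|$ and $|\partial p|$ separately; but this still requires care to avoid a residual $|\xi|^{(d-d_0)/2}$ or logarithmic loss in the non-commuting matrix case.
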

\begin{proof}
Denoting the leading symbol by $\sigma_L$, Property (1) is clear since $|\sigma_L|=|\sigma_L^{1/2}|^2$, viz. by the C*-identity for $r\times r$-matrices.

For (2) we see accordingly
\[
\Big(\partial_x^\beta\partial_\xi^\alpha\sigma_L^{1/2}\Big)\sigma_L^{-1/2}=\sum_{\substack{|\gamma|\leq|\alpha|\\|\delta|\leq|\beta|}}C_{\alpha,\beta,\gamma,\delta}\:\sigma_L^{-(|\alpha|+|\beta|-|\gamma|-|\delta|)}\:\Big(\partial_\xi^\gamma\partial_x^\delta\sigma_L\Big)\sigma_L^{-1},
\]
from which the required estimates $|\cdot|\leq C_{\alpha,\beta}(1+|\xi|)^{-|\alpha|}$ follow, using $d_0\geq1$ and the assumption that $\sigma_L\in\mathrm{HS}_+^{d,d_0}(M,E)$.
\end{proof}

\begin{proof}[Proof of Theorem \ref{spectrum}]
Discreteness of the spectrum follows from Rellich's lemma on compactness of the resolvent on the compact manifold $M$, using the parametrix from Theorem \ref{parametrix}.

For semi-boundedness we may assume that $d_0>1$, or else take integer powers of the operator.
We construct an operator $A_0$ as above in the square root lemma (Lemma \ref{squareroot}). Letting $A:=A_0^*A_0$ we have
\[
\langle Af,f\rangle_0=\langle A_0f,A_0f\rangle_0,\quad\textrm{and}\quad Q-A\in\mathrm{S}^{d-1}.
\]
Now
\[
\langle Qf,f\rangle_0=\langle(Q-A)f,f\rangle_0+\langle Af,f\rangle_0,
\]
for $f\in\Cinf{M}$. The first term is estimated by
\[
\begin{split}
\vert\langle(Q-A)f,f\rangle_0\vert&\leq C\|f\|_{d_0/2}\|(Q-A)f\|_{-d_0/2}\\
&\leq C\|f\|_{d_0/2}\|f\|_{d-1-d_0/2}.
\end{split}
\]
By Lemma \ref{squareroot} we can apply G\aa{}rding's inequality, Corollary \ref{Gaarding}.
\beq
\|f\|_{d_0/2}^2\leq C\big(\|f\|_0^2+\|A_0f\|_0^2\big).
\eeq
For any $\delta>0$ we estimate, using Sobolev interpolation for $0<d-1-d_0/2<d_0/2$, since we are assuming $d_0>1$ and $0<d_0\leq d<d_0+1$,
\beq
\begin{split}
\big\vert\langle(Q-A)f,f\rangle_0\big\vert&\leq C\|f\|_{d_0/2}\|f\|_{d-1-d_0/2}
\leq\delta\|f\|_{d_0/2}^2+C(\delta)\|f\|_{d_0/2}\|f\|_0\\
&\leq2\delta\|f\|_{d_0/2}^2+\widetilde{C}(\delta)\|f\|_0^2\\
&\leq2C_\delta\|A_0f\|_0^2+C_2(\delta)\|f\|_0^2.
\end{split}
\eeq
Choosing $\delta>0$ such that $2C_\varepsilon\delta\leq 1$ finally proves
\[
\langle Qf,f\rangle_0\geq\|A_0\|^2-\vert\langle(Q-A)f,f\rangle_0\vert\geq -C_2(\delta)\|f\|_0^2.
\]
\end{proof}

\section{The stability operator for the zeta function of the Dirac operator: Proof of Theorem \ref{DiracHess}}\label{hessiansDirac}
Before deriving the formula for the stability operator, we describe the recent work by K. Okikiolu and by Okikiolu-Wang on pseudodifferential stability operators (i.e. $L^2$-Hessians) of spectral zeta functions, and it's extension to Dirac type operators. The paper by Okikiolu \cite{Ok4} contains major results needed in the present paper, namely the detailed heat kernel analysis in the construction of the $L^2$-Hessian and explicit formulae for its leading symbol. The setup explained in the introduction leads naturally to the generalization of the stability operator (i.e. $L^2$-Hessian) calculus to the spinor case, and hence may be used for the proof of Theorem \ref{DiracHess}.

Initially, we shall need to recall a few facts on the structure of the set $\Metr{M}$ of Riemannian metrics on the manifold $M$. First it is clear that this set is always a non-empty, convex (positive) cone inside the set $\Cinf{S^2M}$ of smooth symmetric covariant two-tensors on $M$. In fact it can be given a structure as an infinite-dimensional (Riemannian) manifold in various ways (see e.g. \cite{Eb} and \cite{FG}).

The Sobolev spaces can be defined as follows
\begin{definition}
$H^r(S^2M)$ is the completion of $\Cinf{S^2M}$ with respect to the norm $\Vert\cdot\Vert_r$ defined as
\[
\Vert k\Vert_r=\sum_{s\leq r}\DIP{\nabla^s k}{\nabla^s k}_g^{1/2},\quad k\in\Cinf{S^2M},
\]
where $g$ is a fixed ground metric.
\end{definition}
Smoothness of the functional allows the stability operator (i.e. $L^2$-Hessian) to be defined rigorously. On $\Metr{M}$ we take as smooth topology the Fréchet space topology coming from the collection of norms $\Vert\cdot\Vert_r$. If $r>m/2$ we denote by $\Metr{M}^r$ the closure of $\Metr{M}$ in $H^r(S^2M)$. The Sobolev theorem shows that then $H^r(S^2M)$ is contained in the set of continuous sections of $S^2M$. If $K\subseteq\Complex$ is compact and $\rm{Hol}(K)$ is the set of holomorphic functions on (open sets containing) $K$, given the supremum norm, then one has the following result, where $\mathcal{Z}_P(\cdot)$ is as in (\ref{modifiedzeta}) with $P$ satisfying these analytic and naturality assumptions.
\begin{proposition}[\cite{Ok4}]
Suppose $k,r\in\Naturals$ with $r>k+n/2+1$, and suppose $K\subseteq\{s\in\Complex\vert\Real s>k+n/2+1-r\}$. Then the map $g\to\mathcal{Z}_P(\cdot)$ is a $k$-times continuously differentiable map from $H^r(S^2M)$ to $\rm{Hol}(K)$.
\end{proposition}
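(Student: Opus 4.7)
The plan is to represent $\mathcal{Z}_P(s)$ via a Mellin transform of the heat trace of $P_g$ and to propagate smoothness in $g$ through Duhamel's formula, using the naturality of $P_g$ and Sobolev embedding to convert regularity of $g$ into regularity of the assignment $g\mapsto P_g$. For $\Real s$ sufficiently large one has the absolutely convergent representation
\[
\zeta_P(s)=\frac{1}{\Gamma(s)}\int_0^\infty t^{s-1}\bigl[\Tr(e^{-tP_g})-\dim\ker P_g\bigr]\,dt,
\]
so by (\ref{modifiedzeta}) the function $\mathcal{Z}_P(s)$ is obtained from the heat trace by an operation entire in $s$. To reach compact $K$ to the left I would use the standard meromorphic continuation: split the $t$-integral at $t=1$, observe that the $t\geq 1$ contribution is entire in $s$ and depends smoothly on $g$ by the exponential spectral gap above $\ker P_g$, and for the small-$t$ part subtract the first $N$ terms of the heat expansion $\Tr(e^{-tP_g})\sim\sum_{j\geq 0}a_j(g)t^{(j-n)/d}$, leaving a regularized integral holomorphic in $\{\Real s>(n-N)/d\}$.

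First I would verify that $g\mapsto P_g$ is $k$-times continuously differentiable from a neighborhood of each $g_0$ in $H^r(S^2M)$ into a suitable Banach space of differential operators of order $d$. By the naturality assumption, $P_g$ is a universal polynomial in $g$, $g^{-1}$, $\nabla$ and $R$, i.e.\ in the $H^r$ metric and a bounded number of its derivatives; the key input is the Sobolev embedding $H^r\hookrightarrow C^{k+1}$, valid precisely when $r>k+n/2+1$, which makes $H^r(S^2M)$ into a Banach algebra under pointwise operations and guarantees that the first $k$ variations of all relevant coefficients are jointly continuous in the metric. Iterating Duhamel's formula
\[
\frac{d}{d\varepsilon}\bigg|_{\varepsilon=0}e^{-tP_{g+\varepsilon h}}=-\int_0^t e^{-(t-u)P_g}\Bigl(\tfrac{d}{d\varepsilon}\big|_{\varepsilon=0}P_{g+\varepsilon h}\Bigr)e^{-uP_g}\,du
\]
then expresses the $k$-th Fr\'echet variation of $\Tr(e^{-tP_g})$ as a sum of simplex integrals of traced products of heat semigroups interleaved with variations of $P_g$, each term being continuous and multilinear in the variation directions by the Banach algebra property above.

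The main obstacle will be the uniform control of the small-$t$ asymptotics after $k$ differentiations in $g$. Standard parametrix methods for heat kernels, adapted to the reduced regularity setting, should show that these iterated Duhamel expressions still admit short-time expansions of the form $\sum_{j\geq 0}b_{j,k}(g)t^{(j-n)/d}$, whose coefficients $b_{j,k}$ are integrals over $M$ of local natural invariants depending on a jet of $g$ of order bounded in terms of $k$. Truncating this expansion at index $N$ and inserting into the modified Mellin transform yields a meromorphic piece with coefficients $b_{j,k}(g)$ that are $k$-times continuously differentiable in $g\in H^r$, plus a remainder holomorphic for $\Real s>(n-N)/d$ with norm controlled by the Sobolev norm of $g$. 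Matching $N$ against the parameters $k$ and $r$ under the hypothesis $r>k+n/2+1$ translates the remainder holomorphy region into $\{\Real s>k+n/2+1-r\}$, which is exactly the condition on $K$. Uniformity of these estimates for $s$ in compact subsets of this half-plane, combined with Cauchy estimates in the $s$-variable to control $s$-derivatives, then upgrades pointwise-in-$s$ continuity of the $k$-th Fr\'echet derivative to continuity into $\mathrm{Hol}(K)$ equipped with the supremum norm.
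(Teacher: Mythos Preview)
The paper does not supply its own proof of this proposition: it is quoted verbatim from \cite{Ok4} and invoked as background, so there is no argument in the present paper to compare against. Your sketch follows the standard route one would expect for such a result---Mellin representation of $\mathcal{Z}_P$, Duhamel expansion of the heat semigroup, and Sobolev embedding $H^r\hookrightarrow C^{k+1}$ to make the coefficients of $P_g$ depend $C^k$ on $g$---and this is indeed the strategy used in Okikiolu's paper.

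That said, your proposal is a plausible outline rather than a proof, and the step where you ``match $N$ against the parameters $k$ and $r$'' to land exactly on the half-plane $\{\Real s>k+n/2+1-r\}$ is asserted rather than carried out. The actual bookkeeping is that each metric variation $P'_g$ costs one derivative of $g$ beyond those already in $P_g$, and each Duhamel insertion loses a fixed amount of small-$t$ decay in the trace; tracking this precisely through the heat parametrix (with $g$ only of finite Sobolev regularity, so the asymptotic expansion terminates after finitely many terms) is where the explicit bound on $\Real s$ in terms of $r-k-n/2-1$ emerges. You should also be explicit that the kernel dimension is assumed locally constant (as the paper stipulates elsewhere), since otherwise the projection onto $\ker P_g$ is not even continuous in $g$ and the $t\to\infty$ piece of the Mellin integral fails to vary smoothly.
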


The $L^2$-Hessian of the modified zeta function $\mathcal{Z}$ evaluated at
$k\in\CinfStwoM$ is
\beq
\Hess{\mathcal{Z}(P_g,s)(k,k)}=\frac{d^2}{dt^2}_{\big\vert t=0}\mathcal{Z}(P_{g+tk},s).
\eeq
Let $P$ be a second-order operator
(i.e. from now on $2l=2$). Assume that the structure group $H$ is $\Oof{n}$ or
$\SO{n}$. If this is the case and we require the analytic and
naturality assumptions, then $P$ is a \emph{geometric Laplace-type
  operator}. If we furthermore impose that $P$ has stable kernel
in a neighborhood of the ground metric, K. Okikiolu has recently proved:

\begin{theorem}[\cite{Ok4}]\label{OkHessTHM1}
For $s\in\Complex$, there exists a unique symmetric pseudodifferential
operator $T_s(P)$,
\[
T_s:\CinfStwoM\to \CinfStwoM,
\]
such that
\begin{equation}
\Hess\mathcal{Z}(s)(k,k)=\DIP{k}{T_s k}_g.
\end{equation}
The operator $T_s$ is analytic in $s$. For $s\notin n/2+\Naturals^+$ there exist polyhomogeneous pseudodifferential operators $U_s$ and $V_s$ of
degrees $n-2s$ and 2, respectively, such that $T_s=U_s+V_s$. The
operators $U_s$ and $T_s$ are meromorphic in $s$ with simple poles
located in $n/2+\Naturals^{+}$. For general $s$, the
polyhomogeneous symbol expansion of $U_s$ is computable from the
complete symbol of the operator $P$. In particular, there is a simple
algorithm to compute the term $u_s$ of homogeneity
$n-2s$. Furthermore, we can differentiate in $s$ to obtain
\begin{equation}
\Hess\Big(\frac{d}{ds}\Big)^l\mathcal{Z}(s)(k,k)=\BigDIP{k}{\Big(\frac{d}{ds}\Big)^lT_s k}_g,
\end{equation}
and the principal symbol of $(d/ds)^lU_s$ is equal to the leading
order term of $(d/ds)^lu_s$ (provided it does not vanish identically).

The symbol $u_s$ can be computed as follows. Let $x\in M$ and take coordinates on $M$ which are orthonormal at $x$, and take a local trivialization of $E$ on a neighborhood of $x$ to obtain coordinates for $E$. Suppose that in these coordinates
the operator $P'=\frac{d}{dt}_{\big\vert t=0}P_{g+tk}$ is given at the point $x$ by
\begin{equation}
P'=\sum_{\alpha, \beta, i, j}A^{ij}_{\alpha,\beta}\big(\partial^\alpha_wk_{ij}\big)\partial^\beta_w,
\end{equation}
where $\alpha$ and $\beta$ are multi-indices and
$A^{ij}_{\alpha,\beta}$ is an $N\times N$ real-valued matrix (for each
choice of the indices
$i,j,\alpha, \beta$). Set
\beq
C(s)=\Big(\frac{1}{4\pi}\Big)^{n/2}\frac{\Gamma(-S+1)^2}{\Gamma(-2S+2)},\quad\textrm{where}\quad S=s-\frac{n}{2}.
\eeq
Then at $(x,\xi)\in\CoTM$, the value of $u_s(x,\xi)\in\Endo{S^2M}_x$ is given by
\[
(u_s(x,\xi)k)_{ij}=V^{\frac{2s-n}{n}}C(s)
\sum_{\substack{
  |\alpha|+|\beta|=2\\
  |\gamma|+|\delta|=2\\
  k,l}}u_s(\dalpha,\dbeta,\dgamma,\ddelta,x,\xi)\tr(A\ijab(x)A\klgd(x))k_{kl},
\]
where the terms $u_s(\dalpha,\dbeta,\dgamma,\ddelta,x,\xi)$ are given as follows ($I$ denotes identity on $E_x$):
\begin{align*}
u_s(\dsubj\dsubk,I,\dsubp\dsubq,I,x,\xi)=&\ 4(4S^2-1)\xij\xik\xip\xiq\xinorm{n-2s-4},\\
u_s(\dsubj,\dsubk,\dsubp\dsubq,I,x,\xi)=&-2(4S^2-1)\xij\xik\xip\xiq\xinorm{n-2s-4},\\
u_s(\dsubj,\dsubk,\dsubp,\dsubq,x,\xi)=&\ (4S^2+2S-2)\xij\xik\xip\xiq\xinorm{n-2s-4}-(2S-1)\delta_{kq}\xij\xip\xinorm{n-2s-2},\\
u_s(I,\dsubj\dsubk,\dsubp\dsubq,I,x,\xi)=&\ (4S^2-2S)\xij\xik\xip\xiq\xinorm{n-2s-4}+(2S-1)\delta_{jk}\xip\xiq\xinorm{n-2s-2},\\
u_s(I,\dsubj\dsubk,\dsubp,\dsubq,x,\xi)=&-(2S^2+S-1)\xij\xik\xip\xiq\xinorm{n-2s-4}\\&+\Big(S-\frac{1}{2}\Big)(-\delta_{jk}\xip\xiq+\delta_{jq}\xik\xip+\delta_{kq}\xij\xip)\xinorm{n-2s-2},\\
u_s(I,\dsubj\dsubk,I,\dsubp\dsubq,x,\xi)=&\
(S^2+S)\xij\xik\xip\xiq\xinorm{n-2s-4}\\&+\frac{1}{2}(S-1)\delta_{jk}\xip\xiq\xinorm{n-2s-2}+\frac{1}{2}(S-1)\xij\xik\delta_{pq}\xinorm{n-2s-2}\\&-\frac{S}{2}(\delta_{jp}\xik\xiq+\delta_{kq}\xij\xip+\delta_{jq}\xik\xip+\delta_{kp\xij\xiq})\xinorm{n-2s-2}\\&\
\frac{1}{4}(\delta_{jk}\delta_{pq}+\delta_{jp}\delta_{kq}+\delta_{jq}\delta_{kp})\xinorm{n-2s}.
\end{align*}
\end{theorem}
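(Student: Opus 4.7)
The approach starts from the heat-kernel Mellin representation
\[
\mathcal{Z}(P,s) \;=\; \frac{1}{\Gamma(s-n/2)}\int_0^\infty t^{s-1}\bigl(\Tr e^{-tP}-\dim\ker P\bigr)\,dt \;+\; \frac{\dim\ker P}{s\,\Gamma(s-n/2)},
\]
which by Theorem \ref{mero} is entire in $s$, and differentiates twice in the metric direction $k$ under the integral sign. Writing $P_u := P_{g+uk}$, $P' := \frac{d}{du}\big|_{u=0}P_u$ and $P'' := \frac{d^2}{du^2}\big|_{u=0}P_u$, Duhamel's formula
\[
\frac{d}{du}\Big|_{u=0}e^{-tP_u} \;=\; -\int_0^t e^{-(t-r)P}\,P'\,e^{-rP}\,dr,
\]
iterated once, produces after taking the trace a linear-in-$P''$ contribution and a quadratic-in-$P'$ contribution, to which one adds the derivatives of the volume form $\sqrt{\det g}$ entering the $L^2$-pairing and of the projection onto $\ker P$, both licit by the stable-kernel assumption. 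Symmetrizing the resulting bilinear form in $k$ yields a symmetric $\Psi$DO $T_s$ with $\Hess\mathcal{Z}(s)(k,k) = \DIP{k}{T_s k}_g$.

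The splitting $T_s = U_s + V_s$ is then dictated by the order of each term. The $P''$-piece, together with all volume and kernel-projection corrections, assembles into a pseudodifferential operator of order $2$ in $k$ (since $P''$ is of order $\deg P = 2$), which we call $V_s$. The remaining $P'P'$-contribution defines $U_s$; its $t$-integral against $t^{s-1}$ converts the small-time heat-kernel expansion, degree by degree, into polyhomogeneous symbols of orders $n-2s-j$, $j\geq 0$, meromorphic in $s$ with simple poles exactly at $s\in n/2+\Naturals^{+}$, where the Mellin exponent meets a heat invariant.

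To compute the explicit leading symbol $u_s$, fix $x\in M$, take coordinates orthonormal at $x$, trivialize $E$, and expand
\[
P' \;=\; \sum_{\alpha,\beta,i,j} A^{ij}_{\alpha,\beta}\bigl(\partial^\alpha_w k_{ij}\bigr)\,\partial^\beta_w.
\]
To leading order one replaces $e^{-tP}$ locally by the free heat kernel with symbol $e^{-t|\xi|^2}\,\Id$. The $P'P'$-trace becomes a Fourier convolution whose $r$- and $t$-integrals are beta integrals, producing the overall factor
\[
C(s) \;=\; \Bigl(\tfrac{1}{4\pi}\Bigr)^{n/2}\,\frac{\Gamma(-S+1)^2}{\Gamma(-2S+2)},\qquad S=s-n/2,
\]
and the power $|\xi|^{n-2s}$. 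Each occurrence of $\partial^\alpha_w k_{ij}$, after integration by parts onto the second factor, contributes $(i\xi)^\alpha$; the fiber trace $\tr(A^{ij}_{\alpha,\beta}A^{kl}_{\gamma,\delta})$ appears because the two copies of $P'$ are composed; the Kronecker $\delta$'s arise from pairings created when derivatives hit the plane wave $e^{i(x-y)\cdot\xi}$. The tabulated coefficients $(4S^2-1)$, $(2S-1)$, etc., are precisely the $\Gamma$-ratios produced by the beta integrations. Finally, $(d/ds)^l$ passes through the Mellin integral and through the polyhomogeneous expansion by Cauchy estimates applied to the holomorphic family $T_s$ (as in Section \ref{logsection}), and its principal symbol is obtained by differentiating $C(s)\,|\xi|^{n-2s}$ in $s$.

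\textbf{Main obstacle.} The technically hardest step is the rigorous implementation of the splitting $T_s = U_s + V_s$ together with the sharp description of poles and the locally computable character of $U_s$. One must show that every non-local remainder from Duhamel iteration and from the variations of the volume form and kernel projection is either absorbed into $V_s$ (bounded, order $2$) or into lower-order polyhomogeneous tails of $U_s$, and that the small-time expansion of $\Tr(e^{-(t-r)P}P'e^{-rP}P')$ may be integrated term by term against the Mellin weight $t^{s-1}$ uniformly in $s$, off the predicted pole set $n/2+\Naturals^{+}$. This parametrix/Mellin uniformity argument is the technical core of \cite{Ok4}; once it is in place, the explicit $u_s(\partial^\alpha,\partial^\beta,\partial^\gamma,\partial^\delta,x,\xi)$ table reduces to a finite combinatorial calculation of contractions of $\xi$-monomials and $\delta$-factors.
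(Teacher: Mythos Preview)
Your sketch is faithful to the approach of \cite{Ok4}, and indeed the present paper does not reprove this theorem: it is quoted as a result of Okikiolu and only the extension to tensor-spinor bundles is discussed here. The paper's own indication of the argument is minimal --- it singles out the main analytic term
\[
\int\!\!\!\int_{u+v<1}(u+v)^{s}\,P_k'\,e^{-uP}\,P_k'\,e^{-vP}\,du\,dv,
\]
which is precisely the iterated Duhamel $P'P'$-contribution you describe, and then refers the reader to \cite{Ok4} for the detailed heat-kernel/Mellin analysis establishing the splitting $T_s=U_s+V_s$, the meromorphic structure, and the explicit $u_s$ table. Your identification of the beta integrals as the source of the $\Gamma$-factor $C(s)$ and of the Cauchy-estimate argument for passing $(d/ds)^l$ through the family is also the mechanism invoked here (cf.\ Section~\ref{logsection}). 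In short, there is nothing to compare: your plan and the paper's brief pointer both describe the same proof, which lives in \cite{Ok4}.
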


We may more generally let $E$ be a tensor-spinor bundle, as in Section \ref{SummaryAndResults}. The adaptation to this case follows immediately from the details of the proof in \cite{Ok4}. Note that the usual heat kernel estimates are applicable here, in this broader class of generalized Laplacians (see e.g. Chapter 2 in \cite{BGV}). As explained in Section \ref{Section:Diracoperators} and Appendix \ref{BGAppendix}, the Bourguignon-Gauduchon transform allows us to consider a family of operators in a fixed vector bundle, and as in \cite{Ok4}, the main term in the analysis is
\[
\int\int_{u+v<1}(u+v)^sP_k'e^{-uP}P_k'e^{-vP}dudv,
\]
where $P_k'$ is the derivative at $t=0$ of the operator $P$ along a curve of metrics $g+tk$. The proof then proceeds completely analogously, noting that the arguments are local after the point where one trivializes the bundle $E$ and applies normal geodesic coordinates locally on $M$. 

On the local level the analysis relies on the same standard estimates and analysis in Euclidean space, for elliptic operators with matrix-valued symbols, as in the non-spinorial Laplacian case. Note that the Clifford multiplication in the definition of the Atiyah-Singer-Dirac operator in (\ref{DiracDef}) just comprises the choice of one specific endomorphism field, while the proof in \cite{Ok4} works for any such globally consistent construction involving a fiberwise action of the endomorphism bundle of a vector bundle (e.g. the spinor bundle).

\begin{remark}Concerning the gauged Dirac operators, we note that one could try
incorporating square roots of (natural) symmetric endomorphisms into
the concept of natural geometric tensor-spinor operators. Here,
this is not necessary, since, as can easily be verified, both $P$, $P'$ and $P''$  (i.e. differentiating and evaluating at zero perturbation) are in this case natural (see Theorem \ref{varDirac}), which is all that is needed for the proof in \cite{Ok4} to go through. We thus have the following corollary.
\end{remark}

\begin{corollary}\label{OkikioluForDirac}
On the closed Riemannian spin manifold $M$, let $P$ be a second order differential operator in $\Cinf{E}$ satisfying the Analytical assumptions
\ref{analassump} and Naturality assumptions \ref{naturassump}. Then
the above Theorem \ref{OkHessTHM1} and Theorem \ref{OkHessTHM1} hold
for this operator as well. In fact it also applies to the
Bourguignon-Gauduchon gauge transformed squares of Dirac operators, which are not natural in that particular sense.
\end{corollary}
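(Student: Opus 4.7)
My plan for proving Corollary \ref{OkikioluForDirac} is to follow the strategy of \cite{Ok4}, checking that each step remains valid in the tensor-spinor setting, and then to verify separately the additional naturality needed to handle the Bourguignon-Gauduchon gauge-transformed family $\Diracpresuffix{\gamma}{\gamma_t}$.

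First I would revisit the construction of the $L^2$-Hessian $T_s$ in Okikiolu's proof. The starting point is the Mellin representation of $\mathcal{Z}(P,s)$ in terms of the heat trace of $e^{-tP}$, followed by two variations of the metric. Applying Duhamel's formula to $\Tr(e^{-tP_{g+tk}})$ reduces matters to analyzing the principal term
\[
\int\!\!\!\int_{u+v<1}(u+v)^{s}\,P_{k}'\,e^{-uP}\,P_{k}'\,e^{-vP}\,du\,dv,
\]
where $P_{k}'=\frac{d}{dt}_{\big\vert t=0}P_{g+tk}$. After passing to Schwartz kernels, localizing in a normal geodesic chart, and trivializing $E$, the problem becomes one about matrix-valued symbols on $\Reals^n$. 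The heat kernel estimates required at this step are a standard feature of generalized Laplacians on Hermitian bundles (see e.g. Chapter 2 in \cite{BGV}), so they apply verbatim when $E$ is a tensor-spinor bundle. The algebraic computation of the leading symbol of $U_s$, that is, the table of coefficients $u_s(\partial^\alpha,\partial^\beta,\partial^\gamma,\partial^\delta,x,\xi)$, is an expansion in momentum space which uses only the structural form of $P_{k}'$ as a differential operator with matrix-valued coefficients, and is entirely insensitive to whether $E$ is associated to an $\SO{n}$- or a $\Spin{n}$-representation.

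Next I would deal with the Bourguignon-Gauduchon transformed family $P_t:=(\Diracpresuffix{\gamma}{\gamma_t})^2$ in the fixed spinor bundle $\Sigma M_\gamma$. Strictly speaking $P_t$ is not natural in the sense of Assumptions \ref{naturassump}, because the isomorphism $\beta_{\gamma_t}^{\gamma}$ involves a fiberwise square root of the symmetric endomorphism comparing $\gamma$ and $\gamma_t$. However, inspection of Okikiolu's argument shows that the only place where naturality enters is in asserting that $P$, $P':=\frac{d}{dt}_{\big\vert t=0}P_t$ and $P'':=\frac{d^2}{dt^2}_{\big\vert t=0}P_t$ admit universal polynomial expressions in $g$, $g^{-1}$, $\nabla$, $R$ and (for spin) the Clifford section $\gamma$, because this is what fuels the local symbol computation and the heat-kernel bounds with natural coefficients. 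At $t=0$ we have $P=\Dirac^2$, which is natural. For $P'$, the formula (\ref{BGVarIntro}) of Theorem \ref{varDirac} together with the product rule gives such a natural expression after squaring. For $P''$ one differentiates (\ref{BGVarIntro}) once more, using the explicit second-order Taylor expansion of $\beta_{\gamma_t}^{\gamma}$ recalled in Appendix \ref{BGAppendix}; the square-root terms are absorbed by this expansion, leaving a polynomial expression of the required natural form.

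The main obstacle I anticipate is the bookkeeping verification that $P''$ of the squared gauge-transformed Dirac operator genuinely lands in the natural polynomial class, rather than picking up a residual square-root contribution from $\beta_{\gamma_t}^{\gamma}$ that would evade Okikiolu's formalism. Once this is confirmed, every remaining ingredient of Theorem \ref{OkHessTHM1} — meromorphic continuation in $s$, the decomposition $T_s=U_s+V_s$ of orders $n-2s$ and $2$, the pole structure in $n/2+\Naturals^{+}$, and the $(d/ds)^l$ differentiability statement — transfers formally from \cite{Ok4}, since those parts of the argument are insensitive to the specific representation underlying $E$ and only require $P_{k}'$ to be a natural differential operator of the stated order.
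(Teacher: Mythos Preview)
Your proposal is correct and follows essentially the same approach as the paper: both reduce to observing that Okikiolu's argument in \cite{Ok4} localizes to matrix-valued symbols after trivializing $E$ in normal coordinates, that the requisite heat-kernel estimates for generalized Laplacians are supplied by \cite{BGV}, and that for the Bourguignon--Gauduchon family the only naturality actually needed is that of $P$, $P'$ and $P''$ at $t=0$. The paper's discussion (in the remark immediately preceding the corollary) treats the naturality of $P''$ as something ``easily verified'' rather than as the main obstacle you flag, but otherwise your outline matches the paper's argument almost point for point.
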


As mentioned above the endomorphisms in the trivialization of $E$, represented by
matrices $A_{\alpha\beta}^{ij}$ in Theorem \ref{OkHessTHM1}, in the spin case are induced by repeated Clifford multiplication. A crucial step in the following will
be to find and express the traces of these endomorphisms in a manageable way.

To prove Theorem \ref{DiracHess}, using Corollary \ref{OkikioluForDirac}, we find the local coordinates expression of the derivative of the Dirac operator, using the gauge transform of Bourguignon-Gauduchon and first variation formula in Equation \ref{BGVarIntro}.
\begin{remark}
To give results of the type in Theorem \ref{DiracHess}, i.e. with the leading term as stated, there is always the qualifier that the right hand side does not vanish identically. Note that the zeros of the $\Gamma$-factor are all the region in $\Real
s\geq n/2+1$, and thus there are surely no zeros in the half-plane $\Real s<n/2-1$ considered here.
\end{remark}

We will show the following result on the local form of the first variation of the Dirac operator
\begin{proposition}
In a trivialization of $E$ and in normal geodesic coordinates
$\{x_i\}$ in a neighborhood of $x\in U\subseteq M$, i.e. $e_i=\partial_{i\vert x}$ gives an orthonormal basis, we have
\beq\label{localform}
(\Dirac^2)'_{\vert x}=\sum_{i,j,k,l}D_{kl}^{ij}k_{ij}\dsubk\dsubl+\sum_{i,j,k,l}A_{kl}^{ij}(\dsubk
k_{ij})\dsubl+\sum_{i,j,k,l}B_{kl}^{ij}(\dsubk\dsubl k_{ij})+\textrm{DOTs},
\eeq
where the endomorphisms of the fiber $(\Sigma M_\gamma)_{x}$ have the matrix coefficients
\beq\label{DAB}
\begin{split}
D_{kl}^{ij}&=\delta_{ik}\delta_{jl}Id,\\
A_{kl}^{ij}&=-\fracsm{1}{2}\big(\deltakl\deltaij Id - \deltalj\deltaik
Id +\deltalj e_ke_i\big),\\
B_{kl}^{ij}&=\fracsm{1}{4}\big(\deltaij e_le_k-\deltaik e_le_j\big),
\end{split}
\eeq
all as Clifford multiplications on the spinor fields.
\end{proposition}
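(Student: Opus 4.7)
The plan is to substitute the Bourguignon--Gauduchon first variation formula from Theorem \ref{varDirac} directly into $(\Dirac^2)' = \Dirac\,\Dirac' + \Dirac'\,\Dirac$ and specialize to normal geodesic coordinates centered at $x$. In such coordinates the Christoffel symbols and spin connection coefficients vanish at $x$, so $\widetilde{\nabla}_{e_j}$ reduces to $\partial_j$ on spinors, $K_g(e_i)|_x = \sum_j k_{ij}\,e_j$ (since $g$ is orthonormal at $x$), and the unperturbed $\Dirac$ equals $\sum_k e_k\cdot\partial_k$ modulo terms whose value and first derivatives vanish at $x$. The BG formula thus takes the concrete form
\[
\Dirac'\psi\big|_x = -\tfrac12\sum_{i,j}k_{ij}\,e_i\cdot\partial_j\psi + \tfrac14\sum_{i,l}\bigl(\partial_i k_{ll} - \partial_l k_{il}\bigr)\,e_i\cdot\psi,
\]
which is the main algebraic input.

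I would next compute the two compositions in turn. In $\Dirac\,\Dirac'\psi|_x = \sum_k e_k\,\partial_k(\Dirac'\psi|_x)$, distributing $\partial_k$ produces three classes of top-order terms: a $k\cdot(\partial\partial\psi)$ piece $-\tfrac12 k_{ij}e_k e_i\partial_k\partial_j\psi$, a $(\partial k)\cdot(\partial\psi)$ piece $-\tfrac12 (\partial_k k_{ij})e_k e_i\partial_j\psi + \tfrac14(\partial_i k_{ll} - \partial_l k_{il})e_k e_i\partial_k\psi$, and a $(\partial\partial k)\cdot\psi$ piece $\tfrac14(\partial_k\partial_i k_{ll} - \partial_k\partial_l k_{il})e_k e_i\psi$. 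In $\Dirac'\,\Dirac\psi|_x$, since $\Dirac$ carries no $k$-dependence, no new derivatives of $k$ arise from differentiating $\Dirac\psi$: the derivative-in-$\psi$ part of $\Dirac'$ contributes only a $D$-piece $-\tfrac12 k_{ij}e_i e_l\partial_j\partial_l\psi$, while the zeroth-order-in-$\phi$ part of $\Dirac'$ composed with $\Dirac$ contributes an $A$-piece $\tfrac14[\partial_i(\tr k) - (\diverg k)_i]e_i e_l\partial_l\psi$. No $B$-contribution comes from $\Dirac'\,\Dirac$.

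Matching the assembled expression against the ansatz \eqref{localform} then extracts the three tensors. For $D$, adding the $-\tfrac12 k_{ij}e_k e_i$ and $-\tfrac12 k_{ij}e_i e_l$ contributions and symmetrizing $\partial_k\partial_l$ in $(k,l)$, the Clifford anti-commutation $e_a e_b + e_b e_a = -2\delta_{ab}\Id$ collapses the sum to $k_{kl}\Id$, reproducing $D_{kl}^{ij} = \delta_{ik}\delta_{jl}\Id$ (also predicted by the Lichnerowicz identity). For $A$, the trace-type pieces from $\Dirac\,\Dirac'$ and from $\Dirac'\,\Dirac$ pair via the same anti-commutation to yield $-\tfrac12\delta_{kl}\delta_{ij}\Id$; the divergence-type pieces similarly pair to yield $+\tfrac12\delta_{jl}\delta_{ik}\Id$; and the remaining $-\tfrac12(\partial_k k_{ij})e_k e_i\partial_j\psi$ supplies the $-\tfrac12\delta_{jl}e_k e_i$ term, giving the stated $A_{kl}^{ij}$. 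For $B$, only $\Dirac\,\Dirac'$ contributes, and exchanging dummy indices via $\partial_k\partial_l = \partial_l\partial_k$ recasts the expression as the claimed $B_{kl}^{ij} = \tfrac14(\delta_{ij}e_l e_k - \delta_{ik}e_l e_j)$.

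The main obstacle will be the index bookkeeping in identifying $A$: four distinct contributions coming from the two compositions and the two parts of the BG formula must be aligned and then reduced to the three-term structure $\delta_{kl}\delta_{ij} - \delta_{jl}\delta_{ik} + \delta_{jl}e_k e_i$, which is where the Clifford anti-commutator is crucial. A secondary check, which is routine but necessary, is that all corrections coming from non-vanishing second derivatives of the metric at $x$ (i.e.\ curvature contributions, and derivatives of $e_i$ viewed as Clifford sections) sit strictly below the top derivative-order being collected and hence are absorbed in $\mathrm{DOTs}$; this follows directly from the fact that such corrections already appear at one derivative order lower than the terms forming $D$, $A$, and $B$.
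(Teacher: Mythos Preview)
Your proposal is correct and follows essentially the same approach as the paper: both start from $(\Dirac^2)'=\Dirac'\Dirac+\Dirac\Dirac'$, insert the Bourguignon--Gauduchon formula, pass to normal geodesic coordinates, and collapse the paired contributions from the two compositions via the Clifford anticommutator $e_ae_b+e_be_a=-2\delta_{ab}$. The only difference is organizational---the paper combines the two compositions and applies the Clifford identity while still in frame/connection notation and only afterward passes to coordinates, whereas you specialize to coordinates first and then pair terms---but the algebra and the identification of $D$, $A$, $B$ are the same.
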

\begin{remark}
Here and later in this derivation, we disregard terms that are not of total degree 2 in the number of derivatives falling on $k$ plus the number of spinor derivatives. In a mnemonic:
\begin{center}
"\ldots" = DOTs = Degree Other than Two's
\end{center}
\end{remark}
\begin{proof}
Recall the local representation (\ref{spinnablalocal}). From this and Theorem \ref{varDirac} we see that
\begin{align*}
(\Dirac^2)'\psi&=\Dirac'\Dirac\psi+\Dirac\Dirac'\psi\\
&=-\frac{1}{2}\sum_{i,j}e_i\widetilde{\nabla}_{K_{g}(e_i)}\Big(e_j\widetilde{\nabla}_{e_j}\psi\Big)+\frac{1}{4}\sum_j\big[d(\tr_g k)-\diverg_g
    k\big]_{\cdot\gamma}e_j\widetilde{\nabla}_{e_j}\psi\\
&\quad -\frac{1}{2}\sum_{i,j}e_j\widetilde{\nabla}_{e_j}\Big(e_i\widetilde{\nabla}_{K_{g}(e_i)}\psi\Big)+\frac{1}{4}\sum_je_j\widetilde{\nabla}_{e_j}\big[d(\tr_g k)-\diverg_g
    k\big]_{\cdot\gamma}\psi\\
&=-\frac{1}{2}\sum_{i,j}e_ie_j\widetilde{\nabla}_{K_{g}(e_i)}\widetilde{\nabla}_{e_j}\psi-\frac{1}{2}\sum_{i,j}e_i\Big(\nabla_{K_{g}(e_i)}e_j\Big)\widetilde{\nabla}_{e_j}\psi\\
&\quad +\frac{1}{4}\sum_j\big[d(\tr_g k)-\diverg_g k\big]_{\cdot\gamma}e_j\widetilde{\nabla}_{e_j}\psi
-\frac{1}{2}\sum_{i,j}\Big(e_j\nabla_{e_j}e_i\Big)\widetilde{\nabla}_{K_{g}(e_i)}\psi\\
&\quad
-\frac{1}{2}\sum_{i,j}e_je_i\widetilde{\nabla}_{e_j}\widetilde{\nabla}_{K_{g}(e_i)}\psi+\frac{1}{4}\sum_j\Big(e_j\nabla_{e_j}\big[d(\tr_gk)-\diverg_gk\big]_{\cdot\gamma}\Big)\psi\\
&\quad +\frac{1}{4}\sum_je_j\big[d(\tr_gk)-\diverg_gk\big]_{\cdot\gamma}\widetilde{\nabla}_{e_j}\psi
\end{align*}
We rewrite this using the Clifford identities
\[
Xe_i+e_iX=-2g(X,e_i),\quad X\in\Cinf{TM},
\]
to see that
\begin{align*}
&\frac{1}{4}\sum_j\big[d(\tr_g k)-\diverg_g
k\big]_{\cdot\gamma}e_j\widetilde{\nabla}_{e_j}\psi+
\frac{1}{4}\sum_je_j\big[d(\tr_gk)-\diverg_gk\big]_{\cdot\gamma}\widetilde{\nabla}_{e_j}\psi\\
&\quad=-\frac{1}{2}\sum_j\big[d(\tr_g
k)-\diverg_g k\big]_j\widetilde{\nabla}_{e_j}\psi,
\end{align*}
and pick up some DOTs, namely the terms
\beq
-\frac{1}{2}\sum_{i,j}e_i\Big(\nabla_{K_{g}(e_i)}e_j\Big)\widetilde{\nabla}_{e_j}\psi,\quad\textrm{and}\quad
-\frac{1}{2}\sum_{i,j}\Big(e_j\nabla_{e_j}e_i\Big)\widetilde{\nabla}_{K_{g}(e_i)}\psi,
\eeq
to get the expression
\begin{align*}
(\Dirac^2)'\psi&=
-\frac{1}{2}\sum_{i,j}e_ie_j\Big(\widetilde{\nabla}_{K_{g}(e_i)}\widetilde{\nabla}_{e_j}+\widetilde{\nabla}_{e_i}\widetilde{\nabla}_{K_g(e_j)}\Big)\psi-\frac{1}{2}\sum_j\big[d(\tr_g
k)-\diverg_g k\big]_j\widetilde{\nabla}_{e_j}\psi\\
&\quad +\frac{1}{4}\sum_j\Big[e_j\nabla_{e_j}\big(d(\tr_g
k)-\diverg_g k\big)\Big]_{\cdot\gamma}\psi+DOTs.
\end{align*}
Everything will now be expressed in the normal geodesic local coordinates
$\{x_i\}$, using the frame of coordinate vector fields $\{\dsubi\}$. By Gram-Schmidt orthonormalization (which is a smooth
process) on the frame field $\{\dsubi\}$, we also get the smooth local orthonormal frame field $\{e_i\}$ which
at $x$ coincides with $\{\dsubi\}$, since this is already orthonormal at $x$. Thus also $g_{ij\vert x}=\delta_{ij}$. We insert this special orthonormal frame in all the above formulae and denote by $\varphi$ the field of invertible linear transitions
\beq
e_i=\varphi_i^l\dsubl.
\eeq
Note that $\varphi_{i\vert x}^l=\delta_i^j$ and that $\varphi$ only depends on the ground metric $g$ and the chart, but not on
the tangent field $k$.
\newline\indent{}
In local coordinates we have
\begin{align}
&K_g(\dsubi)=g^{lk}k_{ki}\dsubl,\\
&K_g(e_i)=g^{pk}\varphi_i^lk_{kl}\dsubp,\\
&(\delta k)_j=g^{ik}k_{ij,k}=g_{ik}\Big(\dsubk
k_{ij}-\Gamma^l_{ki}k_{lj}-\Gamma^l_{kj}k_{il}\Big),\\
&\big(d(\tr_gk)\big)_j=\dsubj\Big(g^{ik}k_{ik}\Big)=g^{ik}\Big(\dsubj k_{ik}-\Gamma^l_{ji}k_{lk}-\Gamma^l_{jk}k_{il}\Big),
\end{align}
using the convention of Einstein summation. The last equality follows easily for instance from the fact that the Levi-Civita connection,
extended to tensor fields, acts as the exterior derivative on functions, while commuting with musical isomorphisms and tensor contractions.
\newline\indent{}
Using the above with (\ref{spinnablalocal}) to write everything locally, and picking
up some more DOTs, we see
\begin{align*}
(\Dirac^2)'\psi
&=-\frac{1}{2}\sum_{i,j}e_ie_j\Big[\fracsm{\partial}{\partial
  K_g(e_i)}\fracsm{\partial}{\partial e_j}+\fracsm{\partial}{\partial
  e_i}\fracsm{\partial}{\partial K_g(e_j)}\Big]\psi\\
&\quad -\frac{1}{2}\sum_{i,j,k}g^{ik}\big[(\dsubj
k_{ik})\fracsm{\partial}{\partial e_j}-(\dsubk k_{ij})\fracsm{\partial}{\partial e_j}\big]\psi\\
&\quad +\frac{1}{4}\sum_{i,j,k,l}\big[e_je_l\fracsm{\partial}{\partial
  e_j}\big\{g^{ik}\big(\dsubl k_{ik}-\dsubk k_{il}\big)\}\big]\psi+DOTs.
\end{align*}
To obtain the canonical form we rewrite, using that $\varphi$-derivatives are DOTs.
\begin{align*}
\frac{\partial}{\partial
 K_g(e_i)}\frac{\partial}{\partial e_j}&=
g^{pk}\varphi^l_ik_{kl}\dsubp
 (\varphi_j^q\dsubq)=g^{pk}\varphi^l_ik_{kl}\varphi_j^q\dsubp
 \dsubq+DOTs,
\end{align*}
\begin{align*}
\frac{\partial}{\partial
 e_i}\frac{\partial}{\partial K_g(e_j)}&=
g^{pk}\varphi^l_j\Big[\big(\fracsm{\partial}{\partial
 e_i}k_{kl}\big)\dsubp+k_{kl}\fracsm{\partial}{\partial
 e_i}\dsubp\Big]+DOTs.\\
\end{align*}
Evaluating at $x$, the center of the normal geodesic coordinates, we
get
\begin{align*}
&\sum_{i,j}e_ie_j\Big[\fracsm{\partial}{\partial
  K_g(e_i)}\fracsm{\partial}{\partial e_j}+\fracsm{\partial}{\partial
  e_i}\fracsm{\partial}{\partial K_g(e_j)}\Big]_{\big\vert x}\\
&\qquad\qquad\qquad=\sum_{i,j,k}e_ie_j\big(k_{ki}\dsubk\dsubj+k_{kj}\dsubi\dsubk\big)
+\sum_{i,j,k}e_ie_j\big(\dsubi k_{kj}\big)\dsubk\\
&\qquad\qquad\qquad=-2\sum_{i,j}k_{ij}\dsubi\dsubj+\sum_{i,j,k}e_ie_j\big(\dsubi k_{kj}\big)\dsubk.
\end{align*}
The final expression is thus
\begin{align*}
(\Dirac^2)'_{\big\vert
  x}&=\sum_{i,j}k_{ij}\dsubi\dsubj-\frac{1}{2}\sum_{i,j,k}e_ie_j\big(\dsubi
  k_{kj}\big)\dsubk-\frac{1}{2}\sum_{i,j}\big[\dsubj k_{ii}-\dsubi k_{ij}\big]\dsubj\\
&\quad+\frac{1}{4}\sum_{i,j,k}e_je_k\big[\dsubj\dsubk k_{ii}-\dsubi\dsubj k_{ik}\big]
+DOTs.
\end{align*}
From this formula, the above expressions for $A$, $B$ and $D$ can now be seen directly.
\end{proof}

For deriving the leading symbol of the stability operator, we fix some convenient notation, similarly to what has proven convenient in \cite{OW}.
\beq
(k\cdot\xi)_i=\sum_jk_{ij}\xi_j,\quad \xi\cdot
k\cdot\xi=\sum_{i,j}k_{ij}\xi_i\xi_j,\quad
|k|=\bigg[\sum_{i,j}k_{ij}^2\bigg]^{1/2},\quad \tr k=\sum k_{ii}.
\eeq
As can easily be computed
\beq
\begin{split}
&\Big(K_g\Pi_\xi^\bot\Big)_{ij}=k_{ij}-|\xi|^{-2}\sum_kk_{ik}\xi_k\xi_j,\\
&\tr{(K_g\Pi^{\bot}_\xi)^2}=|k|^2-2|\xi|^{-2}|k\cdot\xi|^{2}+|\xi|^{-4}(\xi\cdot
k\cdot\xi)^2, \\
&(\tr{K_g\Pi^{\bot}_\xi})^2=|\xi|^{-4}(\xi\cdot
k\cdot\xi)^2-2|\xi|^{-2}(\xi\cdot k\cdot\xi)(\tr k)+(\tr k)^2.
\end{split}
\eeq
Using the local form of the infinitesimal variation (\ref{localform})
one may also define the coefficient symbols, formally
substituting $\xi_i$ for each $\dsubi$.
\beq
\sigma^{(2)}=Id\sum_{i,j}k_{ij}\xi_i\xi_j,\quad
\sigma^{(1)}=\sum_{i,j,k,l}A^{ij}_{kl}\xi_k\xi_lk_{ij},\quad
\sigma^{(0)}=\sum_{i,j,k,l}B^{ij}_{kl}\xi_k\xi_lk_{ij}
\eeq
By the coefficient components we mean
\beq
\sigma^{(1)}_{kl}=\sum_{i,j}A^{ij}_{kl}k_{ij},\quad
\sigma^{(0)}_{kl}=\sum_{i,j}B^{ij}_{kl}k_{ij}.
\eeq

For the sake of book-keeping, express $u_s$ as a sum
\[
u_s=C(m,s)\Big(u_s^{(1)}+u_s^{(2)}+u_s^{(3)}+u_s^{(4)}\Big),
\]
where the individual terms can be calculated as \cite{OW}
\beq
\begin{split}
\big\langle k,u_s^{(1)}(x,\xi)\,k\big\rangle=
&\ \Big(S^2-\frac{1}{4}\Big)|\xi|^{n-2s-4}\tr\Big(\sigma^{(2)}-2\sigma^{(1)}+4\sigma^{(0)}\Big)^2,\\
\big\langle k,u_s^{(2)}(x,\xi)\,k\big\rangle=
&\ \Big(2S-1\Big)|\xi|^{n-2s-4}\tr\bigg(\big(\sigma^{(1)}\big)^2-|\xi|^2\sum_j\Big[\sum_i\xi_i\sigma_{ij}^{(1)}\Big]^2\bigg),\\
\big\langle k,u_s^{(3)}(x,\xi)\,k\big\rangle=
&\ \Big(2S-1\Big)|\xi|^{n-2s-4}\bigg\{|\xi|^2\tr\Big(\sum_{i,j}\sigma_{ij}^{(1)}\big[\xi_i(k\cdot\xi)_j+\xi_j(k\cdot\xi)_i\big]\Big)\\
&\ +(\xi\cdot k\cdot\xi)\tr\Big(-\sigma^{(1)}-2\sigma^{(0)}\Big)
+|\xi|^2(\tr k)\tr\Big(-\sigma^{(1)}+2\sigma^{(0)}\Big)\bigg\},\\
\big\langle k,u_s^{(4)}(x,\xi)\,k\big\rangle=
&\ \dim
E|\xi|^{n-2s-4}\Big(S-\frac{1}{2}\Big)\times\\
&\ \Big(-2|\xi|^2|k\cdot\xi|^2+(\xi\cdot
k\cdot\xi)^2+|\xi|^2(\xi\cdot k\cdot\xi)(\tr k)\Big)\\
&\ +\dim E |\xi|^{n-2s}\bigg(\frac{1}{2}\tr{(K_g\Pi^{\bot}_\xi)^2}+\frac{1}{4}(\tr{K_g\Pi^{\bot}_\xi})^2\bigg).
\end{split}
\eeq
The various quantities needed are found from Equations (\ref{DAB}).
\beq
\begin{split}
&\sigma^{(2)}=(\xi\cdot k\cdot\xi)\\
&\sigma^{(1)}=-\frac{1}{2}\Big((\tr k)|\xi|^2-(\xi\cdot k\cdot\xi)+\xi(k\cdot\xi)\Big)\\
&\sigma^{(0)}=\frac{1}{4}\big((\tr k)\xi\xi-\xi(k\cdot\xi)\big)\\
&\sigma^{(1)}_{kl}=-\frac{1}{2}\Big((\tr k)\delta_{kl}-k_{kl}+e_k\sum_ie_ik_{il}\Big).\\
\end{split}
\eeq
Note that here the expressions involve Clifford
multiplication. E.g. $\xi(k\cdot\xi)$ means Clifford multiplication by
the vector $k\cdot\xi$ followed by multiplication by $\xi$, two
operations that \emph{do not commute}, and note that $\xi\xi\neq |\xi|^2$.
To evaluate the four parts of the leading symbol, we need to take
traces in the fibers of the spin bundle. For this it is convenient to concentrate some technicalities into the following proposition.

\begin{proposition}[Trace of an endomorphism of repeated Clifford multiplications]\label{CliffTrace}
Let $\{e_i\}$ be an ONB of $TM_x$. Let $1\leq i_1\leq\dots\leq i_{2l}\leq n$ be numbers
such that $i_{\mu}\neq i_1$ for $\mu\neq 1$. Then
\begin{itemize}
\item[(1)] $\tr\bigg(\prod_{r=1}^{2l}e_{i_r}\bigg)=0$,
\end{itemize}
where the trace is taken in the fiber $\Sigma
M_x\simeq\Complex^{2^k}$, where the vectors act through projection in
the Clifford algebra, according to the identifications:
\begin{alignat*}{2}
&\CliffC_{2k} &\simeq\; &\mathrm{M}_{2^k}(\Complex)\\
&\CliffC_{2k+1} &\simeq\; &\mathrm{M}_{2^k}(\Complex)\oplus\mathrm{M}_{2^k}(\Complex).
\end{alignat*}
Furthermore denoting the metric on $T_xM$ by
$\langle\cdot,\cdot\rangle$ we have:
\begin{itemize}
\item[(2)] $\tr(ab)=-\dim E\langle a, b\rangle, \quad
  a,b\in T_xM$.
\item[(3)] $\tr(abcd)=\dim E \Big\{\langle a,
  b\rangle\langle c, d\rangle-\langle a,c\rangle\langle
  b,d\rangle+\langle a,d\rangle\langle b,c\rangle\Big\}, \quad
  a,b,c,d\in T_xM$.
\item[(4)] $\tr(abab)=\dim E\Big\{2\langle
  a,b\rangle^2-|a|^2|b|^2\Big\}$, as a special case of (3).
\end{itemize}
\end{proposition}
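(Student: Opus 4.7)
My plan is to establish (1) directly from the Clifford relations and trace cyclicity, then derive (2)--(4) algebraically with (1) as the sole non-trivial input.

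For (1), the hypothesis $i_\mu \neq i_1$ for $\mu \neq 1$ forces $e_{i_1}$ to appear exactly once in the product and to anticommute with each of the remaining $2l-1$ factors, via $e_i e_j + e_j e_i = -2\delta_{ij} I$. Setting $M = e_{i_2}\cdots e_{i_{2l}}$ and moving $e_{i_1}$ through $M$ therefore produces $(-1)^{2l-1}$, so $e_{i_1}M = -Me_{i_1}$. Cyclicity of the trace gives $\tr(e_{i_1}M) = \tr(Me_{i_1}) = -\tr(e_{i_1}M)$, forcing the trace to vanish. This works uniformly for the spinor representations on $\Complex^{2^k}$ in both parities.

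For (2), expand $a = \sum a^i e_i$ and $b = \sum b^j e_j$; bilinearity of $\tr$ reduces the computation to the values $\tr(e_i e_j)$, which vanish for $i \neq j$ by (1) and equal $-\dim E$ for $i = j$ since $e_i^2 = -I$. Summing yields the formula. I then propose to prove (3) purely algebraically, by applying the polarized Clifford identity $xy = -yx - 2\langle x,y\rangle I$ together with trace cyclicity in three successive steps: once to the adjacent pair $cd$ in $\tr(abcd)$, once to the pair $ca$ in the cycled form $\tr(abdc) = \tr(cabd)$, and once to the pair $cb$ in the resulting $\tr(acbd)$. Each step replaces a four-factor trace by $-1$ times another four-factor trace plus an inner-product term which is computed via (2). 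After the three steps the chain closes back on $\tr(abcd)$ itself with the opposite sign, and solving for it yields the stated formula. Part (4) then follows by substituting $c = a$ and $d = b$ in (3), which collapses the three Gram terms to $2\langle a,b\rangle^2 - |a|^2|b|^2$.

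The main conceptual obstacle is the bookkeeping in (3): one must order the three polarizations so that the residual four-factor trace really does return to $\tr(abcd)$ after the third application rather than to some other permutation, and so that the emerging Gram terms carry the alternating signs familiar from the Pfaffian expansion of a four-point function. The order indicated above achieves this. No additional subtlety appears between the even- and odd-dimensional Clifford algebras, since the argument depends only on the universal relations $e_i e_j + e_j e_i = -2\delta_{ij} I$ and on the normalization $\tr(I) = \dim E$ of the spinor representation.
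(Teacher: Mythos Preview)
Your argument is correct. Parts (1), (2), and (4) follow exactly the same route as the paper: anticommute the distinguished factor $e_{i_1}$ through the remaining $2l-1$ factors and invoke trace cyclicity for (1); expand in the orthonormal basis and use $e_i^2=-I$ together with (1) for (2); and specialize (3) for (4).

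For (3) you take a genuinely different path. The paper computes $\tr(e_ie_je_ke_l)$ by a case analysis (``inclusion-exclusion'') on which pairs among $i,j,k,l$ coincide, reducing each case via (1) and (2), and then expands $a,b,c,d$ in the basis $\{e_i\}$. Your approach is basis-free: you apply the polarized relation $xy=-yx-2\langle x,y\rangle I$ three times (to $cd$, then to $ca$ after cycling, then to $cb$), so that the four-factor trace returns to $-\tr(abcd)$ with three Gram remainders computed by (2), and then solve. This is cleaner and makes the alternating-sign Pfaffian structure manifest; the paper's approach is more pedestrian but perhaps easier to extend mechanically to longer products. Both yield the same formula with the same dependence on $\dim E$, and neither requires any distinction between even and odd $n$.
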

\begin{proof}
By the trace property $\tr(\varphi\circ \psi)=\tr(\psi\circ \varphi)$,
calculating in the Clifford algebra and using that the vector action
gives an algebra morphism:
\begin{align*}
 \tr\bigg(\prod_{r=1}^{2l}e_{i_r}\bigg)
&= \tr\bigg(e_1\prod_{r=2}^{2l}e_{i_r}\bigg) = (-1)^{2l-1}\tr\bigg(\prod_{r=1}^{2l}e_{i_r}\bigg)
= -\tr\bigg(\prod_{r=1}^{2l}e_{i_r}\bigg),
\end{align*}
which proves the first statement.
\newline\indent{}
Applying (1) with $l=1$ and expanding in the basis, we see
\[
\tr(ab)=\sum_{i,j}a^ib^j\tr(e_ie_j)=\sum_{i,j}\Big\{-(\dim E)\deltaij a^ib^j +(1-\deltaij)a^ib^j\tr(e_ie_j)\Big\}=\langle a,b\rangle.
\]
Continuing in this inclusion-exclusion fashion, we see that
\begin{align*}
\tr(e_ie_je_ke_l)&=-\deltaij\tr(e_ke_l)+(1-\deltaij)\deltaik\tr(e_ie_je_ke_l)\\
&\quad+(1-\deltaij)(1-\deltaik)\deltail\tr(e_ie_je_ke_l)\\
&\quad+(1-\deltaij)(1-\deltaik)(1-\deltail)\tr(e_ie_je_ke_l)\\
&=\dim E\Big\{\deltaij\deltajk-(1-\deltaij)\deltaik\deltajl+(1-\deltaij)(1-\deltaik)\deltail\deltajk+0\Big\}
\end{align*}
Thus expanding in the basis we get
\begin{align*}
\tr(abcd)&=\dim E\sum_{i,j,k,l}a^ib^jc^kd^l\Big\{\deltaij\deltakl-(1-\deltaij)\deltaik\deltajl+(1-\deltaij)(1-\deltaik)\deltail\deltajk\Big\}\\
&=\dim E \Big\{\langle a,
  b\rangle\langle c, d\rangle-\langle a,c\rangle\langle
  b,d\rangle+\langle a,d\rangle\langle b,c\rangle\Big\},
\end{align*}
which is (3).
\end{proof}
The proposition gives in particular the following useful formulae.
\begin{corollary}
\begin{align*}
&\tr(\xi\xi)\quad=-\dim E|\xi|^2,\\
&\tr(\xi\xi\xi\xi)=\dim E |\xi|^4,\\
&\tr(\xi(k\cdot\xi))=-(\xi\cdot k\cdot\xi),\\
&\tr(\xi(k\cdot\xi)\xi(k\cdot\xi))=2(\xi\cdot k\cdot\xi)^2-|\xi|^2|k\cdot\xi|^2,
\end{align*}
\end{corollary}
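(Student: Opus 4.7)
The plan is to obtain each of the four identities by directly specialising Proposition~\ref{CliffTrace}, choosing the tangent vectors $a,b,c,d \in T_xM$ appropriately and reading off the resulting inner products. The two key identifications I would repeatedly use are $\langle\xi,\xi\rangle = |\xi|^2$, $\langle\xi, k\cdot\xi\rangle = \sum_{i,j}\xi_i k_{ij}\xi_j = \xi\cdot k\cdot\xi$, and $\langle k\cdot\xi, k\cdot\xi\rangle = |k\cdot\xi|^2$; everything else is bookkeeping.

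First I would apply part~(2) with $a=b=\xi$ to obtain the formula for $\tr(\xi\xi)$. Next, part~(3) with $a=b=c=d=\xi$ produces three terms of the form $|\xi|^2\cdot|\xi|^2$ with signs $+,-,+$, summing to $|\xi|^4$, which yields the expression for $\tr(\xi\xi\xi\xi)$. The third identity is part~(2) with $a=\xi$ and $b=k\cdot\xi$, giving the inner product $\xi\cdot k\cdot\xi$ computed above. The fourth identity is the special case~(4) with the same substitution $a=\xi$, $b=k\cdot\xi$, after inserting $|a|^2=|\xi|^2$, $|b|^2=|k\cdot\xi|^2$ and $\langle a,b\rangle = \xi\cdot k\cdot\xi$ into the formula $\tr(abab) = \dim E\{2\langle a,b\rangle^2 - |a|^2|b|^2\}$.

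There is no substantive obstacle: each of the four identities is a one-line consequence of the trace formulae already proved for products of two and four vectors acting by Clifford multiplication on spinors. The only point requiring attention is the expansion in part~(3), where the alternating-sign structure $\langle a,b\rangle\langle c,d\rangle - \langle a,c\rangle\langle b,d\rangle + \langle a,d\rangle\langle b,c\rangle$ must be correctly collected when all four vectors coincide, so that one obtains a single copy of $|\xi|^4$ rather than three. Apart from this harmless combinatorial check, each formula is immediate.
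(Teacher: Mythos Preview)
Your approach is correct and is exactly what the paper does: the corollary is stated without proof as an immediate specialisation of Proposition~\ref{CliffTrace}, and your choices of $a,b,c,d$ are the right ones. One small point worth flagging: carrying out your substitution in parts~(2) and~(4) actually yields $\tr(\xi(k\cdot\xi))=-\dim E\,(\xi\cdot k\cdot\xi)$ and $\tr(\xi(k\cdot\xi)\xi(k\cdot\xi))=\dim E\,\{2(\xi\cdot k\cdot\xi)^2-|\xi|^2|k\cdot\xi|^2\}$, so the third and fourth displayed formulae in the corollary are missing a factor of $\dim E$ --- this is a typo in the paper (the subsequent computations in the proof of Theorem~\ref{DiracHess} use the version with the $\dim E$ factor).
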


Finally we can apply the preceding to complete the proof of our main Theorem \ref{DiracHess}.
\begin{proof}[Proof of Theorem \ref{DiracHess}]
To find $u_s^{(1)}$, we evaluate the trace
\begin{align*}
&\tr\Big(\sigma^{(2)}-2\sigma^{(1)}+4\sigma^{(0)}\Big)^2\\
&=\tr\Big((\xi\cdot k\cdot\xi)+(\tr k)|\xi|^2-(\xi\cdot k\cdot\xi)+\xi(k\cdot\xi)+(\tr k)\xi\xi-\xi(k\cdot\xi)\Big)^2\\
&=\tr\Big((\tr k)^2|\xi|^4+(\tr k)^2\xi\xi\xi\xi+2|\xi|^2(\tr k)^2\xi\xi\Big)\\
&=0,
\end{align*}
thus giving $\big\langle k,u_s^{(1)}(x,\xi)\,k\big\rangle=0$.

For $u_s^{(2)}$ we calculate
\begin{align*}
\sum_j\bigg[\sum_i\xi_i\sigma_{ij}^{(1)}\bigg]^2&=\frac{1}{4}\sum_j\bigg[\sum_i\xi_i\Big((\tr k)\deltaij-k_{ij}+e_i\sum_ke_kk_{kj}\Big)\bigg]^2\\
&=\frac{1}{4}\sum_j\bigg[(\tr k)\xi_j-(k\cdot\xi)_j+\xi\Big(\sum_ke_kk_{kj}\Big)\bigg]\\
&=\frac{1}{4}\sum_j\bigg[(\tr k)^2\xi_j^2+(k\cdot\xi)^2_j+\xi\Big(\sum_ke_kk_{kj}\Big)\xi\Big(\sum_{k'}e_{k'}k_{k'j}\Big)\\
&\qquad -2(\tr k)\xi_j(k\cdot\xi)_j+2(\tr k)\xi\Big(\sum_k\xi_jk_{kj}\Big)
-2\xi(k\cdot\xi)_j\Big(\sum_ke_kk_{kj}\Big)
\bigg].
\end{align*}
Taking the trace gives
\begin{align*}
&\tr\sum_j\bigg[\sum_i\xi_i\sigma_{ij}^{(1)}\bigg]^2\\
&\quad=\frac{\dim E}{4}\bigg\{|\xi|^2(\tr
k)^2+|k\cdot\xi|^2+2\sum_j\bigg\langle\xi,\sum_ke_kk_{kj}\bigg\rangle^2
-|\xi|^2\sum_j\bigg\vert\sum_ke_kk_{kj}\bigg\vert^2\\
&\qquad\qquad\qquad\quad-2(\tr k)(\xi\cdot\xi)-2(\tr
k)\bigg\langle\xi,\sum_{j,k}e_k\xi_jk_{kj}\bigg\rangle+2\bigg\langle\xi,\sum_{jk}e_k(k\cdot\xi)_jk_{kj}\bigg\rangle\bigg\}\\
&\quad=\frac{\dim E}{4}\bigg\{|\xi|^2(\tr
k)^2+|k\cdot\xi|^2+2|k\cdot\xi|^2-|\xi|^2|k|^2-2(\tr k)(\xi\cdot
k\cdot\xi)\\
&\qquad\qquad\qquad\quad-2(\tr k)(\xi\cdot
k\cdot\xi)+2|k\cdot\xi|^2\bigg\}\\
&\quad=\frac{\dim E}{4}\bigg\{|\xi|^2(\tr
k)^2+5|k\cdot\xi|^2-|\xi|^2|k|^2-4(\tr k)(\xi\cdot k\cdot\xi) \bigg\}.
\end{align*}
Now we calculate the term
\begin{align*}
\tr\big(\sigma^{(1)}\big)^2&=\frac{1}{4}\tr\bigg\{(\tr k)^2|\xi|^4+(\xi\cdot
k\cdot\xi)^2+\xi(k\cdot\xi)\xi(k\cdot\xi)-2|\xi|^2(\tr k)(\xi\cdot
k\cdot\xi)\\
&\qquad\qquad+2|\xi|^2(\tr k)\xi(k\cdot\xi)-2(\xi\cdot k\cdot\xi)\xi(k\cdot\xi)
\bigg\}\\
&=\frac{\dim E}{4}\bigg\{(\tr k)^2|\xi|^4+(\xi\cdot
k\cdot\xi)^2+2(\xi\cdot k\cdot\xi)^2-|\xi|^2|k\cdot\xi|^2\\
&\qquad\qquad\qquad-2|\xi|^2(\tr k)(\xi\cdot k\cdot\xi)-2|\xi|^2(\tr
k)(\xi\cdot k\cdot\xi)+2(\xi\cdot k\cdot\xi)^2\bigg\}\\
&=\frac{\dim E}{4}\bigg\{(\tr k)^2|\xi|^4+5(\xi\cdot k\cdot\xi)^2-|\xi|^2|k\cdot\xi|^2-4|\xi|^2(\tr
k)(\xi\cdot k\cdot\xi)\bigg\}.
\end{align*}
Adding up the contributions we finally get
\[
\big\langle k,u_s^{(2)}(x,\xi)\,k\big\rangle=
\Big(2S-1\Big)|\xi|^{n-2s-4}\frac{\dim\Sigma
  M}{4}\bigg\{|\xi|^2|k|^2-6|\xi|^2|k\cdot\xi|^2+5(\xi\cdot k\cdot\xi)^2
\bigg\}.
\]
To find $u_s^{(3)}$ we calculate
\begin{align*}
&\tr\sigma^{(0)}=\frac{1}{4}\bigg\{\tr(\xi\xi)-\tr(\xi(k\cdot\xi))\bigg\}\\
&\qquad\quad=\frac{\dim\Sigma
M}{4}\bigg\{(\xi\cdot k\cdot\xi)-|\xi|^2(\tr k)\bigg\},\\
&\tr\sigma^{(1)}=-\frac{\dim E}{2}\bigg\{|\xi|^2(\tr k)-2(\xi\cdot k\cdot\xi)\bigg\}.
\end{align*}
Another ingredient of $u_s^{(3)}$ is
\begin{align*}
&\tr\Big(\sum_{i,j}\sigma_{ij}^{(1)}\big[\xi_i(k\cdot\xi)_j+\xi_j(k\cdot\xi)_i\big]\Big)\\
&\qquad=-\frac{1}{2}\tr\bigg\{\sum_{i,j}\Big[(\tr
k)\deltaij-k_{ij}+e_i\sum_ke_kk_{kj}\Big]\Big(\xi_i(k\cdot\xi)_j+\xi_j(k\cdot\xi)_i\Big)\bigg\}\\
&\qquad=-\frac{1}{2}\tr\bigg\{2(\tr k)(\xi\cdot k\cdot\xi)-2|k\cdot\xi|^2+\xi\sum_{j,k}e_kk_{kj}(k\cdot\xi)_j+(k\cdot\xi)\sum_{j,k}e_k\xi_jk_{kj}\bigg\}\\
&\qquad=-\frac{\dim E}{2}\bigg\{2(\tr k)(\xi\cdot
k\cdot\xi)-2|k\cdot\xi|^2-2|k\cdot\xi|^2\bigg\}\\
&\qquad=\dim E\bigg\{2|k\cdot\xi|^2-(\tr k)(\xi\cdot k\cdot\xi)\bigg\}.
\end{align*}
Thus collecting terms we find
\begin{align*}
&\big\langle
k,u_s^{(3)}(x,\xi)\,k\big\rangle\\
&\qquad=\Big(S-\frac{1}{2}\Big)|\xi|^{n-2s-4}\dim
E\bigg\{4|\xi|^2|k\cdot\xi|^2-3(\xi\cdot k\cdot\xi)^2-|\xi|^2(\tr k)(\xi\cdot k\cdot\xi)\bigg\}.
\end{align*}
Summing up the contributing terms $u_s^{(k)}$ the leading symbol written as $\langle k,u_s(x,\xi)\,k\rangle_g$ equals
\begin{align*}
2^{\lfloor\frac{n}{2}\rfloor-2}\Big(\frac{1}{4\pi}\Big)^{\fracsm{n}{2}}\frac{\Gamma(-S+1)^2}{\Gamma(-2S+2)}|\xi|^{n-2s}
\Bigg\{\Big[2s-(n-1)\Big]\tr\big(K_g\Pi^{\bot}_\xi\big)^2
+\big(\tr{K_g\Pi^{\bot}_\xi\big)^2}\Bigg\},
\end{align*}
which completes the proof of Theorem \ref{DiracHess}.
\end{proof}

\section{Gauge breaking and factorization of stability operators of spectral invariants}\label{projections}
To find the leading symbol by differentiation, we need to pass back to the
unmodified zeta function $\zeta(s)$ and summarize this transition in a lemma. We introduce the notation
\[
\eta_k:=2\sum_{j=1}^{2k+1}\frac{1}{j}-\sum_{j=1}^k\frac{1}{j},
\]
and extract for convenience the $\Gamma$-factor for $\Real s<n/2-1$, writing
\[
\Hess\mathcal{Z}(s)=\frac{\Gamma(-S+1)^2}{\Gamma(-2S+2)}W(s).
\]
Then we have the following lemma.
\begin{lemma}\label{gammalemma}
\begin{itemize}
\item[]
\item[(1)]If $n=2k+1$ odd, then $\Hess\zeta'(0)=\frac{(-1)^{k+1}\pi^{3/2}}{2^{2k+2}(k+1)!}W(0)$.

\item[(2)] If $n=2k$ even, then $\Hess\zeta'(0)=\frac{(-1)^kk!}{(2k+1)!}\Big\{W'(0)+\eta_kW(0)\Big\}$.
\end{itemize}
\end{lemma}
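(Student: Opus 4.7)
The first step is to pass from the modified zeta function $\mathcal{Z}$ back to $\zeta$ itself. Under the standing assumption that $\dim\ker P$ is locally constant in the metric, the second term in the definition \eqref{modifiedzeta} of $\mathcal{Z}$ is metric-independent and therefore contributes nothing to the Hessian. So
\[
\Hess\zeta(s) \;=\; \frac{\Gamma(s-n/2)}{\Gamma(s)}\,\Hess\mathcal{Z}(s) \;=\; R(s)\,W(s),
\qquad R(s) := \frac{\Gamma(s-n/2)}{\Gamma(s)}\cdot\frac{\Gamma(-S+1)^{2}}{\Gamma(-2S+2)}.
\]
Differentiating at $s=0$ gives the key identity
\[
\Hess\zeta'(0) \;=\; R(0)\,W'(0) \;+\; R'(0)\,W(0),
\]
so the lemma is reduced to computing the two-term Taylor expansion of the meromorphic factor $R(s)$ at $s=0$. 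The parity of $n$ controls which of $R(0)$ and $R'(0)$ survives.

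\textbf{Odd case} $n = 2k+1$. Here $\Gamma(s-n/2) = \Gamma(s-k-\tfrac{1}{2})$ is regular at $s=0$, while $1/\Gamma(s) = s + \gamma s^{2} + O(s^{3})$ has a simple zero, so $R(0) = 0$ and $R'(0)$ equals the value at $s=0$ of the analytic factor $F(s) := \Gamma(s-n/2)\Gamma(1-s+n/2)^{2}/\Gamma(2+n-2s)$. The standard half-integer identities $\Gamma(\tfrac{1}{2}+m) = (2m)!\sqrt{\pi}/(4^{m}m!)$ and $\Gamma(\tfrac{1}{2}-m) = (-4)^{m}m!\sqrt{\pi}/(2m)!$, applied at $m = k+1$, yield $F(0) = (-1)^{k+1}\pi^{3/2}/(2^{2k+2}(k+1)!)$, which is exactly (1).

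\textbf{Even case} $n=2k$. Here both $\Gamma(s-k)$ and $\Gamma(s)$ have simple poles at $s=0$, producing an $\infty/\infty$ indeterminate form whose resolution is the main point of the proof. Using $\Gamma(s-k) = \Gamma(s+1)/\prod_{j=0}^{k}(s-j)$, Taylor expansion of both numerator and denominator to order $s$ produces
\[
\frac{\Gamma(s-k)}{\Gamma(s)} \;=\; \frac{(-1)^{k}}{k!}\bigl[\,1 + H_{k}s + O(s^{2})\,\bigr], \qquad H_{k} = \sum_{j=1}^{k}\frac{1}{j},
\]
the cancellation of the Euler--Mascheroni constants from $\Gamma(s+1)$ and $1/\Gamma(s)$ being the only subtle algebraic step. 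The remaining factor $G(s) := \Gamma(k+1-s)^{2}/\Gamma(2k+2-2s)$ is analytic at $s=0$ with value $G(0) = (k!)^{2}/(2k+1)!$, and its logarithmic derivative at $s=0$ is computed via $\psi(m+1) = -\gamma + H_{m}$ to be $2[\psi(2k+2) - \psi(k+1)] = 2(H_{2k+1} - H_{k})$. Multiplying the two expansions gives $R(0) = (-1)^{k}k!/(2k+1)!$ and
\[
\frac{R'(0)}{R(0)} \;=\; H_{k} + 2(H_{2k+1} - H_{k}) \;=\; 2H_{2k+1} - H_{k} \;=\; \eta_{k},
\]
which upon substitution into the key identity yields (2). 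The only real obstacle is tracking the Laurent expansions to the correct order so that the first-order corrections survive the cancellation of poles; everything else is routine Gamma-function manipulation.
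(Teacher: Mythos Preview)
Your proof is correct. The paper states this lemma without proof, merely introducing it as a summary of the transition from $\mathcal{Z}$ back to $\zeta$, so you have supplied exactly the Gamma-function computation that the paper leaves implicit; there is no alternative approach to compare against. One minor remark: in the even case you can avoid the Euler--Mascheroni cancellation entirely by simplifying $\Gamma(s-k)/\Gamma(s) = 1/\prod_{j=1}^{k}(s-j)$ before expanding, but your route via $\Gamma(s+1)$ is equally valid.
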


To apply Lemma \ref{gammalemma} we note that in Theorem \ref{DiracHess}
\[
\sigma_L\big(W(s)\big)=2^{\lfloor\frac{n}{2}\rfloor-2}\Big(\frac{1}{4\pi}\Big)^{\fracsm{n}{2}}|\xi|^{n-2s}
\Bigg\{\Big[2s-(n-1)\Big]\tr\big(K_g\Pi^{\bot}_\xi\big)^2
+\big(\tr{K_g\Pi^{\bot}_\xi\big)^2}\Bigg\}
\]
Using this and polarization, we find that
\newline\noindent\newline
$\bullet$\; If $n=2j+1$ is odd then
\begin{align*}
&\sigma_L\big[\Hess_g\zeta'(0)\big](x,\xi)K_g=\\
&\quad\quad\frac{j}{2^{3j+4}\pi^{j-1}(j+1)!}(-1)^j|\xi|^n\bigg\{\Pi^{\bot}_\xi K_g\Pi^{\bot}_\xi
-\frac{1}{n-1}\tr\big(\Pi_\xi^\perp K_g\big)\Pi_\xi^\perp\bigg\}.
\end{align*}
$\bullet$\; If $n=2j$ is even then we have
\begin{align*}
&\sigma_L\big[\Hess_g\zeta'(0)\big](x,\xi)K_g=\frac{j!}{2(2\pi)^j(2j+1)!}\times\\
&(-1)^j|\xi|^n\bigg[\Pi^\bot_\xi K_g\Pi^\perp_\xi+(n-1)\Big\{\Pi^{\bot}_\xi K_g\Pi^{\bot}_\xi
-\frac{1}{n-1}\tr\big(\Pi_\xi^\perp K_g\big)\Pi_\xi^\perp\Big\}\Big(\log|\xi|-\frac{\eta_j}{2}\Big)\bigg].
\end{align*}
\begin{proposition}\emph{(Factorizing out projections)}\newline\label{factorizing}
\noindent{}We can factor out the projections on invariant directions as follows ($n\geq 3$).
\begin{itemize}
\item[$\bullet$] For $n=2j+1$ odd there are $H_{2j+1}\in\mathrm{S}^{n}(M,S^2M)$ and $C(n)>0$ s.t.
\[
\Hess\zeta'(0)=(-1)^j\projconfdiffperp H\projconfdiffperp
\]
\item[$\bullet$]For $n=2j$ even there exists $H_{2j}\in\mathrm{CL}^{n,1}(M,S^2M)$ and $C(n)>0$ s.t.
\[
\Hess\zeta'(0)=(-1)^j\projdiffperp H\projdiffperp.
\]
The leading symbols are respectively
\begin{align}
&\sigma_L^n(H_{2j+1})(x,\xi)=C(n)|\xi|^n,\\
&\sigma_L^n(H_{2j})(x,\xi)=C(n)|\xi|^n\Big[I+(n-1)\Big(\log|\xi|-\frac{\eta_j}{2}\Big)\Phi(x,\xi)\Big],\label{Htwoj}
\end{align}
where $\Phi(x,\xi):S^2_xM\to S^2_xM$ is the orthogonal projection map
\[
\Phi(x,\xi)K_g=\Pi^{\bot}_\xi K_g\Pi^{\bot}_\xi
-\frac{1}{n-1}\tr\big(\Pi^\perp_\xi K_g\big)\Pi_\xi^\perp
\]
\end{itemize}
These factorizations are not unique, since addition of invariant expressions remains undetectable. In addition each $H$ can be chosen symmetric with respect to the $L^2$-inner product.
\end{proposition}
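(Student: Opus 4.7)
My plan is to reduce the claim to an algebraic symbol identity together with a soft functional-analytic argument using gauge invariance at a stationary point. First I would verify, at the symbol level, that the explicit leading symbols of $\Hess\zeta'(0)$ displayed just above the proposition---obtained by combining Theorem \ref{DiracHess} with Lemma \ref{gammalemma}---do factor in the claimed form $(-1)^j\,\sigma_L(\Pi)\circ A\circ\sigma_L(\Pi)$, where $\Pi$ denotes the relevant orthogonal projection from Proposition \ref{projectionsymbols} and $A$ is the asserted $\sigma_L^n(H_{2j+1})$, resp.\ $\sigma_L^n(H_{2j})$. The required algebraic identities are $P^2 = P$ and $P\Phi = \Phi P = \Phi^2 = \Phi$, where $PK := \Pi_\xi^\perp K\Pi_\xi^\perp$ and $\Phi$ is the endomorphism of (\ref{Htwoj}); both follow directly from Proposition \ref{projectionsymbols} together with $\tr(\Pi_\xi^\perp) = n-1$. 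A short matrix calculation then matches the prescribed symbols and simultaneously identifies the positive constant $C(n) > 0$ with the explicit prefactors from Lemma \ref{gammalemma}.

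Next I would choose any representative $H_0$ in the appropriate symbol class ($\mathrm{S}^n(M,S^2M)$ in odd dimension, resp.\ $\mathrm{CL}^{n,1}(M,S^2M)$ in even dimension) with leading symbol equal to $A$; existence is standard in the polyhomogeneous calculus, and for the log-polyhomogeneous variant is supplied by the setup recalled in Section \ref{logsection}. The decisive input for upgrading the symbol identity to an operator identity is the gauge invariance of the spectrum: $\Dirac^2$ is diffeomorphism-invariant in every dimension, and $\det\Dirac^2$ is additionally conformally invariant in odd dimension. At the stationary metric, the standard argument (differentiate $F(\varphi_s^{*}(g + tk)) = F(g + tk)$ twice at $s = t = 0$) shows that $\Hess\zeta'(0)$ annihilates $\diff$, resp.\ $\confdiff$, in one entry. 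Combined with symmetry of the Hessian and with $\Pi = \Pi^{*} = \Pi^2$ (holding exactly for the honest $L^2$-orthogonal projection constructed in Proposition \ref{projectionsymbols}, not merely modulo smoothing), this yields the exact operator identity
\[
\Hess\zeta'(0) \;=\; \Pi\circ\Hess\zeta'(0)\circ\Pi.
\]

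With this in hand the factorization is produced by the one-step ansatz
\[
H \;:=\; (-1)^j\,\Hess\zeta'(0) \;+\; H_0 \;-\; \Pi H_0 \Pi,
\]
for which a direct computation using $\Pi^2 = \Pi$ gives $(-1)^j\Pi H\Pi = \Hess\zeta'(0)$, and the leading-symbol calculation from the first paragraph shows $\sigma_L(H) = \sigma_L(H_0) = A$, since the correction term $(-1)^j\Hess\zeta'(0) - \Pi H_0 \Pi$ has vanishing leading symbol by construction. Replacing $H$ by $\tfrac{1}{2}(H + H^{*})$ produces an $L^2$-symmetric representative without altering either the leading symbol (already self-adjoint at the symbol level) or the factorization (as $\Hess\zeta'(0)$ and $\Pi$ are both self-adjoint).

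The main technical obstacle lies in the even-dimensional case, where three points from the log-polyhomogeneous calculus of Section \ref{logsection} must be invoked: (i) the two-term symbol $C(n)|\xi|^n[\Id + (n-1)(\log|\xi| - \eta_j/2)\Phi(x,\xi)]$ is genuinely an element of $\mathcal{P}^{n,1}$ and lifts to an operator in $\mathrm{CL}^{n,1}$; (ii) the class $\mathrm{CL}^{n,1}$ is closed under the operations $H_0 \mapsto \Pi H_0 \Pi$ and $H \mapsto H^{*}$; and (iii) the leading-symbol map respects these operations on both the $\log|\xi|$-piece and the constant-in-$\log|\xi|$ piece. Each follows from Lesch's calculus, but the careful bookkeeping of both pieces of the leading symbol---already flagged in the discussion after (\ref{logform})---is what makes this step non-trivial.
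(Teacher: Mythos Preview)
Your proposal is correct and follows essentially the same strategy as the paper: verify the factorization at the leading-symbol level using the algebraic identities for the projection symbols from Proposition~\ref{projectionsymbols}, invoke gauge invariance to get the exact operator identity $\Hess\zeta'(0)=\Pi\,\Hess\zeta'(0)\,\Pi$, absorb the lower-order remainder into $H$, and symmetrize. Your explicit ansatz $H:=(-1)^j\Hess\zeta'(0)+H_0-\Pi H_0\Pi$ is just a repackaging of the paper's $H=L+R_{-1}$ with $R_{-1}:=\Hess\zeta'(0)-\Pi L\Pi$; the two constructions coincide once the sign $(-1)^j$ is tracked.

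One point worth noting: you are more careful than the paper in flagging that the identity $\Hess\zeta'(0)=\Pi\,\Hess\zeta'(0)\,\Pi$ requires the ground metric to be a \emph{stationary} point of the functional. The paper's proof simply asserts ``the full stability operator has the corresponding invariant directions'' without comment, but as you observe, for $k=\mathcal{L}_Xg\in\diff$ one has $\Hess F(h,k)=-DF_g(\mathcal{L}_Xh)$ in general, which vanishes only when $DF_g=0$. Since the proposition is only ever applied at stationary metrics (Corollary~\ref{ZetaItself}, Theorem~\ref{DiracExtremals}), this is harmless in context, but your explicit mention of it is a genuine clarification rather than a deviation in method.
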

\begin{proof}
From the above expressions for the leading symbols of the stability operators (i.e. $L^2$-Hessians), and of the projections in Proposition \ref{projectionsymbols}, the proposition is true on the leading symbol level, writing
\[
\Hess\zeta'(0)=\Pi_{V^\perp}L\:\Pi_{V^\perp}+R_{-1},
\]
for $V$ either $\confdiff$ or $\diff$, and where $L$ has the leading symbol that remains after factorizing out the leading symbols of the projections. Finally $R_{-1}$ is the remainder term of order $n-1$ or $(n-1,1)$ respectively. Note that in the even-dimensional case, we can factor out $\projdiffperp$ by using
\[
\confdiffperp\subseteq\diffperp.
\]
Now the full stability operator has the corresponding invariant directions, i.e.
\[
\begin{split}
\Hess\zeta'(0)&=\Pi_{V^\bot}\Hess\zeta'(0)\:\Pi_{V^\perp}\\
&=\big(\Pi_{V^\perp}\big)^2L\big(\Pi_{V^\perp}\big)^2+\Pi_{V^\perp}R_{-1}\Pi_{V^\perp}\\
&=\Pi_{V^\perp}\big(L+R_{-1}\big)\Pi_{V^\perp}.
\end{split}
\]
Hence by defining $H=L+R_{-1}$, which leaves the leading symbol unaltered, the factorization also includes the remainder term and we have
\beq\label{FactorEq}
\Hess F = \Pi H \Pi.
\eeq

Using the symmetry of the Hessian and the projections $\Pi$, we see that averaging in (\ref{FactorEq}) with the formal adjoint $\tilde{H}=\frac{1}{2}(H^*+H)$ gives a symmetric choice of $H$ with the desired properties.
\newline\indent{}
\end{proof}

Next is to show that these leading symbols are in fact both positive and hypoelliptic. As mentioned in the introduction, the fact that in even dimensions the term of highest log-degree is singular (namely contains the projection $\Phi$) complicates the construction of the parametrix. As the following proposition shows, it can be done via slightly refined estimates, using the explicit symbol structure of the stability operator.

\begin{proposition}\label{hypoandpositive}
The symbols from Proposition \ref{factorizing} satisfy
\[
\begin{split}
H_{2j+1}&\in\mathrm{HS}_+^{n}(M,S^2M),\\
H_{2j}&\in\mathrm{HCL}_+^{n,1}(M,S^2M).
\end{split}
\]
\end{proposition}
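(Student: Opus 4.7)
The proof splits naturally into the odd and even cases. In the odd case, $\sigma_L(H_{2j+1})(x,\xi) = C(n)|\xi|^n I_{S^2_xM}$ is a positive scalar multiple of the identity on $S^2_xM$, so $|\sigma_L|=C(n)|\xi|^n=|\sigma_L^{-1}|^{-1}$, trivially satisfying condition (1) of Definition \ref{defhypo} with bi-degree $(n,n)$. For condition (2), each $\partial_{\xi_i}$ applied to $|\xi|^n$ saves one factor of $|\xi|^{-1}$ and cancels against $\sigma_L^{-1}$, while $\partial_{x_i}$ does not contribute, yielding the bound $|\xi|^{-|\alpha|}$. Positivity of $\sigma_L$ on $|\xi|\geq 1$ is immediate.

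For the even-dimensional case, I would write
\[
\sigma_L(H_{2j})(x,\xi) = C(n)|\xi|^n\big[I + a(\xi)\,\Phi(x,\xi)\big],\qquad a(\xi) := (n-1)\big(\log|\xi|-\tfrac{\eta_j}{2}\big),
\]
and exploit that $\Phi$ from (\ref{Htwoj}) is a genuine orthogonal projection ($\Phi^2=\Phi=\Phi^*$) on $S^2_xM$, positively $0$-homogeneous in $\xi$. The spectrum of $I+a\Phi$ is $\{1,\,1+a(\xi)\}$; choosing $R$ so that $a(\xi)>0$ for $|\xi|\geq R$, the leading symbol is strictly positive, establishing the positivity claim. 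The projection identity gives the closed-form inverse
\[
\big(I+a\Phi\big)^{-1} = I - \tfrac{a}{1+a}\Phi,
\]
whose operator norm is bounded by $2$ on $\{|\xi|\geq R\}$. Hence $|\sigma_L^{-1}|\leq C|\xi|^{-n}$, while $|\sigma_L|\leq C(1+\log|\xi|)|\xi|^n \leq C'|\xi|^{n+\varepsilon}$ for any $\varepsilon>0$, which verifies condition (1) of Definition \ref{defhypo} at bi-degree $(n+\varepsilon, n)$.

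Condition (2) then reduces to a Leibniz expansion. Using $0$-homogeneity of $\Phi$, so that $|\partial_\xi^\alpha\partial_x^\beta \Phi|\leq C_{\alpha,\beta}|\xi|^{-|\alpha|}$, together with $|a(\xi)|\leq C\log|\xi|$ and $|a'(|\xi|)|\leq C|\xi|^{-1}$, one finds $|\partial_\xi^\alpha \partial_x^\beta \sigma_L|\leq C_{\alpha,\beta}(1+\log|\xi|)|\xi|^{n-|\alpha|}$. Multiplying by the uniformly bounded factor $|\xi|^n\sigma_L^{-1}$ yields
\[
\big|\sigma_L^{-1}\,\partial_\xi^\alpha\partial_x^\beta\sigma_L\big| \;\leq\; C_{\alpha,\beta}(1+\log|\xi|)|\xi|^{-|\alpha|} \;\leq\; C_{\alpha,\beta,\varepsilon}|\xi|^{\varepsilon-|\alpha|}
\]
for $|\xi|\geq R$, which is absorbed into the $\varepsilon$-slack in $\mathrm{HCL}_+^{n,1}=\mathrm{CL}^{n,1}\cap\bigcap_{\varepsilon>0}\mathrm{HS}_+^{n+\varepsilon,n}$, completing membership for every $\varepsilon>0$.

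The main obstacle is precisely the point flagged in the remark after the definition of $\mathrm{HCL}_+^{d,k}$: the highest-log-degree term $|\xi|^n\log|\xi|\cdot\Phi(x,\xi)$ of $\sigma_L(H_{2j})$ is singular (a rank-deficient projection), so it alone does not deliver hypoellipticity. The rescue is algebraic rather than analytic: the subleading-in-log-degree but regular summand $C(n)|\xi|^n\cdot I$ ensures that $I+a\Phi$ is invertible with uniformly bounded inverse, and the exact projection identity $\Phi^2=\Phi$ delivers the explicit formula $(I+a\Phi)^{-1}=I-\tfrac{a}{1+a}\Phi$. Once this is in hand, the hypoellipticity estimates reduce to a bookkeeping exercise on derivatives of $|\xi|^n$, $\log|\xi|$, and the $0$-homogeneous projection $\Phi(x,\xi)$, as outlined above.
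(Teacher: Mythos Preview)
Your approach mirrors the paper's in all the essentials: the odd case is immediate, positivity follows from $\Phi$ being a projection, and the explicit inverse comes from the projection identity $(I+a\Phi)^{-1}=I-\frac{a}{1+a}\Phi$, giving $\lvert\sigma_L^{-1}\rvert\leq C\lvert\xi\rvert^{-n}$ and hence Property~(1) of Definition~\ref{defhypo} at bi-degree $(n+\varepsilon,n)$.

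The gap is in your verification of Property~(2). Your Leibniz bound yields $\lvert\sigma_L^{-1}\partial_\xi^\alpha\partial_x^\beta\sigma_L\rvert\leq C(1+\log\lvert\xi\rvert)\lvert\xi\rvert^{-\lvert\alpha\rvert}$, and you then assert that the logarithm is ``absorbed into the $\varepsilon$-slack'' of $\mathrm{HCL}_+^{n,1}=\mathrm{CL}^{n,1}\cap\bigcap_{\varepsilon>0}\mathrm{HS}_+^{n+\varepsilon,n}$. But look again at Definition~\ref{defhypo}: the bi-degree $(d,d_0)$ enters only condition~(1). Condition~(2) is identical across all $\mathrm{HS}^{d,d_0}$ in the $(\rho,\delta)=(1,0)$ class fixed by the paper, and it demands exactly $C\lvert\xi\rvert^{-\lvert\alpha\rvert}$ with no logarithmic loss. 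So as stated, your argument does not establish membership in any $\mathrm{HS}^{n+\varepsilon,n}$.

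The paper recovers the missing factor by exploiting more than just the inverse formula. Writing $\sigma=I+a\Phi$, one has $\sigma^{-1}\Phi=\frac{1}{1+a}\Phi$, so $\lvert\sigma^{-1}\Phi\rvert\leq C/\log\lvert\xi\rvert$ for large $\lvert\xi\rvert$. Differentiating the identity $\Phi=\Phi^2$ via Leibniz and feeding this decay back inductively, the paper obtains the sharper estimate
\[
\big\lvert\sigma^{-1}\partial_\xi^\alpha\partial_x^\beta\Phi\big\rvert\leq C_{\alpha,\beta}\frac{(1+\lvert\xi\rvert)^{-\lvert\alpha\rvert}}{\log\lvert\xi\rvert},
\]
so that in $\sigma^{-1}\partial\sigma=\sigma^{-1}\big[(\partial a)\Phi+a\,\partial\Phi\big]$ the factor $a\sim\log\lvert\xi\rvert$ is cancelled by the $1/\log\lvert\xi\rvert$ from $\sigma^{-1}\partial\Phi$, yielding Property~(2) exactly. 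The point is that $\Phi^2=\Phi$ is used twice: once to invert $I+a\Phi$, and once more to control $\sigma^{-1}$ acting on \emph{derivatives} of $\Phi$.
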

\begin{proof}
The statement concerning $H_{2j+1}$ is immediate. For the positivity of $\sigma_L(H_{2j})$ for large $|\xi|$, it follows from (\ref{Htwoj}), since $\Phi$ is a projection. Equivalently,
\[
(n-1)\tr\big(K_g\Pi^{\bot}_\xi\big)^2\geq\big(\tr{K_g\Pi^{\bot}_\xi\big)^2},
\]
which is nothing but Cauchy-Schwarz' inequality.

It is needed to verify the estimates in Definition \ref{defhypo}, and here Property (1) is immediate.

For (2) we let $\varepsilon$ with $0<\varepsilon<1$ be arbitrary and show that it belongs to the hypoelliptic class of bi-degree $(n+\varepsilon,n)$. We shall apply the following general formula, valid for any orthogonal projection $\Pi$ and $\alpha\in\Reals\backslash\{-1\}$.
\beq\label{projformula}
\big(I+\alpha\Pi\big)^{-1}=I-\frac{\alpha}{\alpha+1}\Pi=I-\Pi-\frac{1}{1+\alpha}\Pi.
\eeq
We need only to prove Property (2) in Definition \ref{defhypo} for the symbol $\sigma$, defined by
\beq
\sigma(x,\xi)=\frac{\sigma_L(H_{2j})(x,\xi)}{C(n)|\xi|^n}.
\eeq
Equation (\ref{projformula}) gives the following formula
\beq
\sigma^{-1}=\big(I-\Phi\big)+\frac{1}{1+(n-1)\Big(\log|\xi|-\frac{\eta_j}{2}\Big)}\Phi.
\eeq
Having written $\sigma^{-1}$ as a sum of the projection onto the orthogonal complement and a term with $1/\log|\xi|$-decay for large $|\xi|$, we get the following estimate.
\beq
\big\vert\sigma^{-1}\Phi\big\vert\leq C\frac{1}{\log|\xi|},\quad |\xi|\quad\text{large}.
\eeq
By differentiating the identity
\beq
\Phi=\Phi^2,
\eeq
using Leibniz' rule, and that $\Phi$ is homogeneous of degree $0$ in $|\xi|$, i.e.
\beq
\big\vert\partial_\xi^\alpha\partial_x^\beta\Phi\big\vert\leq C_{\alpha,\beta}(1+|\xi|)^{-\alpha},
\eeq
we inductively get estimates on all derivatives of $\Phi$ as follows
\beq
\big\vert\sigma^{-1}\big[\partial_\xi^\alpha\partial_x^\beta\Phi\big]\big\vert\leq C_{\alpha,\beta}\frac{(1+|\xi|)^{-\alpha}}{\log|\xi|},\quad |\xi|\quad\text{large}.
\eeq
With these estimates, the symbol
\[
\sigma=I+(n-1)\Big(\log|\xi|-\frac{\eta_j}{2}\Big)\Phi(x,\xi)
\]
is easily seen to satisfy Property (2) in Definition \ref{defhypo} as claimed.
\end{proof}

\section{Completion of the proof of Theorem \ref{DiracExtremals}}
\begin{proof}[Proof of Theorem \ref{DiracExtremals}]
The proof of the main Theorem \ref{DiracExtremals}, giving the generic extremal behavior of the determinant of the Dirac Laplacian $\det{\Dirac^2}$, can now finally be carried out as explained in the introduction, using the same strategy as in the proof of Corollary \ref{ZetaItself}.

Namely, by Proposition \ref{factorizing} the stability operator $\Hess\zeta'(0)$ factorizes in both even and odd dimensions, into a product of projections onto gauge invariance directions, and operators $H_n$ in dimension $n$. By Proposition \ref{hypoandpositive}, these modified stability operators $H_{2j+1}$ and $H_{2j}$ belong to the spaces $\mathrm{HS}_+^{n}(M,S^2M)$ and $\mathrm{HCL}_+^{n,1}(M,S^2M)$, respectively. Thus we may apply the spectral results on semi-bounded\-ness and pure eigenvalue spectrum, tending to infinity and of finite multiplicity, as proven in the main spectral result, Theorem \ref{spectrum}. Again, as in Equation (\ref{Vdef}), one defines the finite-dimensional subspace as the finite direct sum of finite-dimensional eigenspaces for eigenvalues of the sign opposite to that of the leading symbol. In other words the Morse index of the determinant functional is finite at critical points (under the assumptions of Theorem \ref{DiracExtremals}). Arguing again analogously to Equation (\ref{Hneg}) in the proof of Corollay \ref{ZetaItself}, this completes the proof of Theorem \ref{DiracExtremals}.
\end{proof}

\appendix
\section{The Bourguignon-Gauduchon gauge transform of the Atiyah-Singer-Dirac operator}\label{BGAppendix}
For the convenience of the reader, we include a more detailed summary of the paper \cite{BG} on the Bourguignon-Gauduchon isometry and its use in finding the variation of the Atiyah-Singer-Dirac operator. This is essentially a commented translated excerpt of the paper \cite{BG} (see also \cite{Bo} and \cite{Ma}).

Let $V$ be an $n$-dimensional vector space and denote by $\mathscr{M}V$
the convex cone of metrics (i.e. inner products) on $V$. If $g$ is a metric on $V$ we denote by
$\mathcal{F}_OV_g\subseteq\mathcal{F}V$ the space of $g$-orthonormal bases, which has a right
action of $\Oof{n}$. All these actions are free and transitive. Note that if $f\in\mathcal{F}_OV_g$, then
\[
g=(f^{-1})^*e,
\]
where $e$ is the standard metric of $\Reals^n$.
\newline\indent
  Given two metrics $g$ and $h$ we get a $g$-symmetric and positive automorphism
$H_g$ of $V$ by duality:
\beq
h(u,v)=g(H_g(u),v),\quad u,v\in V
\eeq
Note that this is just the usual index-raising from
Riemannian geometry. $H_g^{-\frac{1}{2}}$ is, by its
very definition, an isometry of inner product spaces.
\begin{proposition}[\cite{BG}]
Let $g$ and $h$ be metrics on $V$. Then the map
\[
b_h^g:\mathcal{F}_OV_g\to\mathcal{F}_OV_h
\]
given by $b_h^g(f)=H_g^{-1/2}\circ f$, using the positive square root
of the symmetric positive endomorphism $H_g$, is natural in the sense that:
\begin{itemize}
\item[(1)] $b_h^g=(b_g^h)^{-1}$, $b_g^g=\Id$,
\item[(2)] $b_g^h$ commutes with the right action of $\Oof{n}$ on
  $\mathcal{F}V$.
\item[(3)] If $t\mapsto g_t$ is a smooth curve from $g_0$ to $g_t$, then $b_{g_t}^{g_0}$ gives an isotopy from $\mathcal{F}_OV_{g_0}$ to $\mathcal{F}_OV_{g_t}$.
\end{itemize}
\end{proposition}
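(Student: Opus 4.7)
My plan is to dispatch the three claims separately, preceded by a quick check of well-definedness of $b_h^g$. For the latter, given $f\in\mathcal{F}_OV_g$ and standard basis vectors $x,y\in\Reals^n$, the computation
\[
h(H_g^{-1/2}f(x),H_g^{-1/2}f(y))=g(H_g^{1/2}f(x),H_g^{-1/2}f(y))=g(f(x),f(y))=e(x,y),
\]
using the defining identity for $H_g$ together with the $g$-symmetry of $H_g^{1/2}$, shows that $H_g^{-1/2}\circ f\in\mathcal{F}_OV_h$, so the map is defined with the claimed target.

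For (1), the relation $b_g^g=\Id$ is immediate: taking $h=g$ in $h(u,v)=g(H_gu,v)$ forces $H_g=\Id$. For the inverse property I would introduce $K_h\in\mathrm{End}(V)$ by the analogous formula $g(u,v)=h(K_hu,v)$, and observe that combining this with the defining identity of $H_g$ yields $H_gK_h=\Id$ as endomorphisms of $V$. Thus $K_h$ and $H_g$ commute and are simultaneously diagonalizable; on a common eigenbasis, the principal positive square roots reduce to entry-wise positive square roots, so $K_h^{-1/2}=H_g^{1/2}$ as linear maps of $V$, independently of whether one prefers to think of the square root as the $h$-symmetric positive square root of $K_h^{-1}$ or the $g$-symmetric positive square root of $H_g$. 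The composition $b_g^h\circ b_h^g(f)=H_g^{1/2}\circ H_g^{-1/2}\circ f=f$ then closes this step.

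For (2), $\Oof{n}$-equivariance is automatic by associativity: the $\Oof{n}$-action on frames is by right composition, while $H_g^{-1/2}$ acts on the left, so $b_h^g(f\circ\tau)=(H_g^{-1/2}\circ f)\circ\tau=(b_h^g(f))\circ\tau$ for every $\tau\in\Oof{n}$. For (3), smoothness of $t\mapsto b_{g_t}^{g_0}$ reduces to the smooth dependence of $t\mapsto H_{g_0}(t)$ (which is in fact linear in $g_t$) followed by the principal square root map on $g_0$-symmetric positive definite endomorphisms of $V$; this last map is a smooth diffeomorphism onto its image, for instance by the inverse function theorem applied to $A\mapsto A^2$, or equivalently via the holomorphic functional calculus.

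The only point requiring any care is the identification of square roots in (1), and the simultaneous diagonalization of the commuting pair $H_g$ and $K_h=H_g^{-1}$ handles it cleanly; every other step is purely formal.
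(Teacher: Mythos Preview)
Your proof is correct. The paper itself does not supply a proof of this proposition: it is quoted from \cite{BG} and immediately followed by a discussion of a more geometric interpretation (horizontal transport in the natural connection on $(\mathcal{F}V,p)$), so there is no argument in the paper to compare against.

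A small remark on your step (1): the simultaneous-diagonalization argument for identifying $K_h^{-1/2}$ with $H_g^{1/2}$ is fine, but you can shorten it by noting that $H_g$ is symmetric with respect to \emph{both} $g$ and $h$ (indeed $h(H_gu,v)=g(H_g^2u,v)=g(H_gu,H_gv)=h(u,H_gv)$), so the principal positive square root is the same endomorphism regardless of which inner product one uses to define ``symmetric positive''. Equivalently, the functional-calculus square root of a diagonalizable operator with positive spectrum is intrinsic. Everything else---well-definedness, equivariance in (2), and smoothness in (3)---is exactly as you wrote.
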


It so happens that there is a more geometric way of viewing the
map $b_h^g$. For this, remember that $\mathcal{F}V$ is a principal right
$\Oof{n}$-bundle with the bundle projection
\[
\xymatrix{
\mathcal{F}V\ar[d]^{p}\\
\mathscr{M}V
}
\]
defined by $p(f)=(f^{-1})^*e$. By the polar decomposition of
$\mathrm{Gl}(n)$, this bundle is globally trivial. The fiber
of $g$ is $\mathcal{F}_OV_g$ and the tangent space $T_f\mathcal{F}V$ may be
identified with $L(\Reals^n,V)$. Let $\mathscr{A}_fV$ and
$\mathscr{S}_fV$ be the subspaces of $L(V,\Reals^n)$ which
respectively have anti-symmetric and symmetric matrices with respect
to $f$. The subspace $\mathscr{A}_fV$ is the vertical subspace of the
bundle and $\mathscr{S}_fV$ is a natural complement. The distribution
of subspaces $f\mapsto\mathscr{S}_fV$ is $\Oof{n}$-equivariant, since
symmetry of matrices is preserved by orthogonal conjugation. Thus it
is the horizontal distribution of a certain $\Oof{n}$-connection in
the principal bundle, which we will call the \emph{natural connection}
on $(\mathcal{F}V,p)$.

\begin{proposition}\cite{BG}
Let $g,h\in\mathscr{M}V$. The map $b_h^g$ coincides with the
horizontal transport in $\mathcal{F}V$ with respect to the natural
connection, along the curve $t\mapsto (1-t)g+h$ joining $h$ and $g$ inside $\mathscr{M}V$.
\end{proposition}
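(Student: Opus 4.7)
My plan is to exhibit explicitly a candidate horizontal lift of the segment $g_t := (1-t)g + th$ starting at $f \in \mathcal{F}_OV_g$, and verify that it is tangent to the horizontal distribution at every time. The natural candidate, suggested directly by the definition of $b$, is $f_t := b^g_{g_t}(f) = H_{g,t}^{-1/2} \circ f$, where $H_{g,t}$ is determined by $g_t(u,v) = g(H_{g,t} u, v)$. The proof then reduces to three ingredients: (i) the endpoint identities $f_0 = f$ and $f_1 = b^g_h(f)$, automatic from the definitions; (ii) the projection identity $p(f_t) = g_t$, a short bilinear-form calculation using that $H_{g,t}^{1/2}$ is $g$-symmetric; and (iii) the substantive claim that $\dot f_t$ belongs to the horizontal subspace $\mathscr{S}_{f_t}V$ for each $t$ --- equivalently, that the endomorphism $A_t := \dot f_t \circ f_t^{-1} = \big(\tfrac{d}{dt} H_{g,t}^{-1/2}\big) \circ H_{g,t}^{1/2}$ of $V$ is $g_t$-symmetric.

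The key structural observation for (iii) is that along the linear segment one has $H_{g,t} = (1-t)I + tH_g$, so that $H_{g,t}$ and $H_g$ generate the same commutative $g$-symmetric subalgebra of $\Endo{V}$. By the functional calculus for the positive $g$-symmetric operator $H_{g,t}$, every expression $F(H_{g,t})$ --- together with its $t$-derivative, for smooth $F$ defined near the spectrum --- lies in this subalgebra. In particular $H_{g,t}^{\pm 1/2}$, their time derivatives, and $H_{g,t}$ itself all commute pairwise and are all $g$-symmetric.

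Given this, $A_t$ is a product of two commuting $g$-symmetric endomorphisms, hence is itself $g$-symmetric; moreover it commutes with $H_{g,t}$. Using $g_t(\cdot,\cdot) = g(H_{g,t}\cdot,\cdot)$, one then computes $g_t(A_t u, v) = g(H_{g,t} A_t u, v) = g(A_t H_{g,t} u, v) = g(H_{g,t} u, A_t v) = g_t(u, A_t v)$, which is precisely $g_t$-symmetry. This gives horizontality of $\dot f_t$ at each $t$, and combined with (i) and (ii) identifies $b^g_h(f)$ as the endpoint of the horizontal lift of $g_t$ starting from $f$.

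The principal obstacle I anticipate is a bookkeeping one: keeping the three different symmetry conventions (with respect to $e$ on $\Reals^n$, to $g$ on $V$, and to $g_t$ on $V$) straight under the identifications induced by $f$ and $f_t$, since the horizontal distribution is defined via the first while most of the quantities in sight naturally live on $V$. Once the commutativity afforded by the linearity of $t \mapsto g_t$ is in hand, the remaining symmetry verification is essentially formal, and this commutativity is precisely the feature that singles out the straight-line segment in $\mathscr{M}V$ as the natural parameter along which to compute the horizontal lift of $b$.
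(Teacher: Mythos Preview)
The paper does not supply its own proof of this proposition; it is stated in the appendix with a citation to \cite{BG}, and the text proceeds directly to the spin extension. Your argument is correct and is the natural one: the decisive observation, which you isolate cleanly, is that along the straight-line segment $g_t = (1-t)g + th$ one has $H_{g,t} = (1-t)I + tH_g$, so the entire family lies in a single commutative algebra of $g$-symmetric positive endomorphisms. Functional calculus then makes $A_t = \big(\tfrac{d}{dt} H_{g,t}^{-1/2}\big) H_{g,t}^{1/2}$ both $g$-symmetric and commuting with $H_{g,t}$, hence $g_t$-symmetric. Your translation between $g_t$-symmetry of $A_t \in \Endo{V}$ and $e$-symmetry of $f_t^{-1}\dot f_t$ on $\Reals^n$, via the $g_t$-orthonormality of $f_t$, is exactly the identification required by the paper's definition of the horizontal subspace $\mathscr{S}_{f_t}V$. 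The endpoint and projection checks (i), (ii) are routine, as you say.
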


This more geometric version extends directly to the spin case. For
this we let $\widetilde{\mathcal{F}}V$ be a realization of the
universal (double) cover of $\mathcal{F}V$. Every fiber
$\mathcal{F}_OV_g$ is covered non-trivially by a manifold
$\widetilde{\mathcal{F}}_OV_g$ diffeomorphic to $\Pin{n}$ and the elements are
called the spinorial bases of $V$ relative to $g$ and the covering
$\mathcal{\widetilde{F}}V$.

To see that there is in fact a diffeomorphism, one may apply polar decomposition in $\Gl{n}$. This gives a deformation retract of $\mathcal{F}V$ onto $\mathcal{F}_OV_g$, which by algebraic topology lifts to a deformation retract of $\mathcal{\widetilde{F}}V$ onto the space
\[
\widetilde{\mathcal{F}}_OV_g:=\pi^{-1}(\mathcal{F}_OV_g),
\]
which is therefore simply connected and is thus the universal covering.

\begin{proposition}[\cite{BG}]
The natural transformation $b$, which to each pair of metrics $g$ and
$h$ associates the diffeomorphism $b_h^g$, lifts to a natural
transformation $\beta$ of the spinorial bases
$\widetilde{\mathcal{F}}V$, which to each pair of metrics $g$ and $h$
associates a $\Pin{n}$-equivariant diffeomorphism from
$\widetilde{\mathcal{F}}_OV_g$ to $\widetilde{\mathcal{F}}_OV_h$.
\end{proposition}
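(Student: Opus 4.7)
The proof strategy is to lift the natural $\Oof{n}$-connection on $\mathcal{F}V$ to a $\Pin{n}$-connection on the double cover $\widetilde{\mathcal{F}}V$, and then define $\beta_h^g$ as horizontal transport along the straight-line path $t\mapsto(1-t)g+th$ in $\mathscr{M}V$, exactly as $b_h^g$ was characterized in the preceding proposition.

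First I would exploit the fact that the covering map $\pi:\widetilde{\mathcal{F}}V\to\mathcal{F}V$ is a local diffeomorphism, so the horizontal distribution $f\mapsto\mathscr{S}_fV$ pulls back under $\pi$ to a smooth distribution $\widetilde{f}\mapsto\widetilde{\mathscr{S}}_{\widetilde{f}}V$ on $\widetilde{\mathcal{F}}V$ that is complementary to the vertical tangent space of the bundle projection $\widetilde{p}=p\circ\pi:\widetilde{\mathcal{F}}V\to\mathscr{M}V$. Because the downstairs distribution is $\Oof{n}$-equivariant and the $\Pin{n}$-action on $\widetilde{\mathcal{F}}V$ covers the $\Oof{n}$-action on $\mathcal{F}V$, the lifted distribution is automatically $\Pin{n}$-equivariant, hence defines a principal $\Pin{n}$-connection on $(\widetilde{\mathcal{F}}V,\widetilde{p})$.

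Next, given metrics $g$ and $h$, I would define $\beta_h^g:\widetilde{\mathcal{F}}_OV_g\to\widetilde{\mathcal{F}}_OV_h$ by horizontal transport, with respect to this lifted connection, along the affine path $\gamma(t)=(1-t)g+th$, which stays in $\mathscr{M}V$ by convexity. Horizontal transport in a principal bundle is a diffeomorphism between fibers and commutes with the structure group action, so $\beta_h^g$ is a $\Pin{n}$-equivariant diffeomorphism. Since $\pi$ carries horizontal vectors to horizontal vectors and the path $\gamma$ is shared, the identity
\begin{equation*}
\pi\circ\beta_h^g=b_h^g\circ\pi
\end{equation*}
holds by the uniqueness of horizontal lifts of $\gamma$ starting at a prescribed point. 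The naturality properties $\beta_g^g=\Id$, $(\beta_h^g)^{-1}=\beta_g^h$, and the smooth-isotopy property along curves of metrics follow at once from the corresponding properties of parallel transport, and compatibility with the $\Pin{n}$-action refines the $\Oof{n}$-equivariance of $b$.

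The only genuinely nontrivial point — and what I would expect to be the main obstacle — is verifying that the lifted horizontal transport is globally well-defined, i.e.\ that there is no monodromy obstruction to lifting the path $t\mapsto b_{\gamma(t)}^{g}(f)$ in $\mathcal{F}V$ to a continuous path in $\widetilde{\mathcal{F}}V$ starting at a given $\widetilde{f}\in\pi^{-1}(f)$. This is a standard covering-space argument: $[0,1]$ is simply connected, so any continuous path downstairs lifts uniquely once a starting point upstairs is chosen, and the lift is smooth because $\pi$ is a local diffeomorphism. Combined with the equivariance established above, this unique lift is precisely $t\mapsto\beta_{\gamma(t)}^{g}(\widetilde{f})$, completing the construction and proving the proposition.
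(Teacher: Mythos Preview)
The paper does not actually supply a proof of this proposition; it is quoted from \cite{BG} as part of the review in Appendix~\ref{BGAppendix}. However, the sentence immediately preceding the proposition --- ``This more geometric version extends directly to the spin case'' --- indicates precisely the argument you give: lift the natural $\Oof{n}$-connection to the double cover and define $\beta_h^g$ as parallel transport along the segment $t\mapsto(1-t)g+th$, so that it covers $b_h^g$ by construction. Your proposal is correct and is exactly the intended route.
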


The preceding extends to the case of a Riemannian manifold $M$, and we get a $\Spin{n}$-equivariant bundle map
\[
\beta_\gamma^\eta: \mathcal{P}_\mathrm{Spin}M_\gamma\to\mathcal{P}_\mathrm{Spin}M_\eta.
\]
Here $\gamma$ and $\eta$ are spin metrics that both correspond to the same topological spin
structure. See \cite{BG} and Milnor \cite{Mi}; one considers the group $\mathrm{Gl}^+(n)$ of
matrices with positive determinant and realize a spin structure is as a principal $\widetilde{\mathrm{G}}\mathrm{l}^+(n)$-bundle, denoted $\widetilde{\mathcal{F}}^+M$, which is a double cover as $G$-bundles of $\mathcal{F}^+M$, the positively oriented frames on $M$.

The spinor bundles obtained by associating are denoted by subscript $\gamma$ as
\[
\Sigma M_\gamma=\mathcal{P}_\mathrm{Spin}M_\gamma\times_\rho\Complex^{2^k}.
\]
Again, the $\Spin{n}$-equivariance ensures that
\beq\label{betadef}
\beta_\eta^\gamma([\varphi,v]):=[\beta_\eta^\gamma(\varphi),v]
\eeq
is well defined, so that $\beta$ is a bundle map between spinor bundles
\[
\beta_\eta^\gamma:\Sigma M_\gamma\to\Sigma M_\eta,
\]
if both spin metrics $\gamma$ and $\eta$ correspond to the same
topological spin structure. When this is the case it induces a map on smooth sections (i.e. spinor fields), still denoted in the same way:
\[
\beta_\eta^\gamma:\Cinf{\Sigma M_\gamma}\to\Cinf{\Sigma M_\eta}.
\]
Note also that by the very definition of the Hermitian structure and
of $\beta$ in (\ref{betadef}), $\beta$ is an isometry of Hermitian
vector bundles. Another important property is that $b$ and $\beta$ are
compatible with Clifford multiplication, in the sense that
\beq
\beta_\eta^\gamma(X\cdot_\gamma\varphi)=b_h^g(X)\cdot_\eta\beta_\eta^{\gamma}(\varphi).
\eeq

\subsection{Variation of the Dirac operator and of its eigenvalues}
The gauge transformed Dirac operator may now be described. Fixing a
topological spin structure and spin metrics $\gamma$ and $\eta$
corresponding to this and the metrics $g$ and $h$ respectively, we let
\begin{equation}
\Diracpresuffix{\gamma}{\eta}=\big(\beta_\eta^\gamma)^{-1}\circ\Dirac^\eta\circ \beta_\eta^\gamma.
\end{equation}
Note that this operator
\[
\Diracpresuffix{\gamma}{\eta}:\Cinf{\Sigma M_\gamma}\to\Cinf{\Sigma M_\gamma}
\]
is most definitely \emph{not} the Dirac operator, generically,
corresponding to the spinor metric $\gamma$. Rather the important
point is to consider it as an operator acting canonically on
$\gamma$-spinors but having the same eigenvalues as the Dirac operator
in the spin-metric $\eta$. As such we shall now derive the
corresponding infinitesimal variation.
\newline\indent{}One expression for the Dirac operator is
\[
\Dirac^\gamma\psi=\sum_{i=1}^ne_{i\cdot\gamma}\widetilde{\nabla}_{e_i}^{\gamma}\psi,\quad\psi\in\Cinf{\Sigma
M_\gamma},
\]
using a $g$-orthonormal frame $(e_i)$ and the spinor connection
corresponding to $\gamma$. Note that we use $\sim$ to denote lifted quantities, i.e. $\widetilde{\nabla}^\gamma$ is the lifted spin connection in $\gamma$.

\begin{theorem}[\cite{BG}]
The transformed \mbox{$\Diracpresuffix{\gamma}{\eta}$} of the Dirac operator acts on
$\psi$ a $\gamma$-spinor field as follows
\begin{equation}\label{Diractrans}
\begin{split}
\Diracpresuffix{\gamma}{\eta}\psi=&\sum_{i=1}^{n}e_{i\cdot\gamma}\widetilde{\nabla}_{H_{g}^{-1/2}(e_i)}^{\gamma}\psi\\&+\frac{1}{4}\sum_{i=1}^{n}e_{i\cdot\gamma}\Big[H_g^{1/2}\circ\Big(\nabla_{H_g^{-1/2}(e_i)}^{g}H_g^{-1/2}+\prefix^{g}{A}_{H_g^{-1/2}(e_i)}^h\circ
H_g^{-1/2}\Big)\Big]_{\cdot\gamma}\psi,
\end{split}
\end{equation}
where $(e_i)$ is a $g$-orthonormal frame and
$\prefix^{g}{A}^h=\nabla^h-\nabla^g$ is the difference of the
Levi-Civita connections for $g$ and $h$.
\end{theorem}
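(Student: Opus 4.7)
The plan is to compute $\Diracpresuffix{\gamma}{\eta} = (\beta_\eta^\gamma)^{-1} \circ \Dirac^\eta \circ \beta_\eta^\gamma$ by (i) writing $\Dirac^\eta$ in an $h$-orthonormal frame adapted to $g$, (ii) transferring the Clifford multiplications via the intertwining property of $\beta_\eta^\gamma$, and (iii) identifying the gauged spin connection $(\beta_\eta^\gamma)^{-1}\widetilde{\nabla}^\eta\beta_\eta^\gamma$ as $\widetilde{\nabla}^\gamma$ plus the spin lift of an explicit skew-symmetric 1-form. Concretely, I would fix a local $g$-orthonormal frame $(e_i)$ and set $\tilde f_i := H_g^{-1/2}(e_i)$; this is $h$-orthonormal because $H_g^{-1/2}:(TM,g)\to(TM,h)$ is an isometry by construction. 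Writing $\Dirac^\eta\phi = \sum_i \tilde f_i\cdot_\eta \widetilde{\nabla}^\eta_{\tilde f_i}\phi$, the relation (\ref{betaCliff}) with $b_h^g(e_i)=\tilde f_i$ yields
\[
\Diracpresuffix{\gamma}{\eta}\psi = \sum_{i=1}^n e_i\cdot_\gamma\Big((\beta_\eta^\gamma)^{-1}\widetilde{\nabla}^\eta_{\tilde f_i}\beta_\eta^\gamma\psi\Big).
\]

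The crux is step (iii). Since $\beta_\eta^\gamma$ is the spin lift of $b_h^g=H_g^{-1/2}$, equivalently the horizontal transport along $t\mapsto(1-t)g+th$ for the natural $\Oof{n}$-connection on $\mathcal{F}V$ lifted to $\widetilde{\mathcal{F}}V$, the general principle that pullback by a connection-preserving bundle isomorphism commutes with associated covariant derivatives shows that $(\beta_\eta^\gamma)^{-1}\widetilde{\nabla}^\eta\beta_\eta^\gamma$ is the spin lift of the pulled-back Levi-Civita connection $(H_g^{-1/2})^*\nabla^h$ on $(TM,g)$. Both $(H_g^{-1/2})^*\nabla^h$ and $\nabla^g$ are $g$-metric, so their difference is a 1-form $\alpha$ with values in $g$-skew endomorphisms of $TM$. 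Setting $\Phi:=H_g^{-1/2}$, applying the Leibniz rule to $\nabla^h_X(\Phi Y)$, and splitting $\nabla^h=\nabla^g+\prefix^{g}{A}^h$ give
\[
(\Phi^*\nabla^h)_X Y = \nabla^g_X Y + \Phi^{-1}(\nabla^g_X\Phi)Y + \Phi^{-1}\prefix^{g}{A}_X^h(\Phi Y),
\]
so $\alpha(X) = H_g^{1/2}\circ\big(\nabla^g_X H_g^{-1/2} + \prefix^{g}{A}_X^h\circ H_g^{-1/2}\big)$, which is precisely the endomorphism inside the brackets of the theorem. The classical embedding $\mathfrak{so}(n)\hookrightarrow\mathrm{Cl}_n$ supplies the factor $\tfrac{1}{4}$ when the spin lift of $\alpha(X)$ is realized as Clifford multiplication, giving $(\beta_\eta^\gamma)^{-1}\widetilde{\nabla}^\eta_X\beta_\eta^\gamma\psi = \widetilde{\nabla}^\gamma_X\psi + \tfrac{1}{4}\alpha(X)_{\cdot\gamma}\psi$. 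Substituting $X=\tilde f_i$ and summing produces the stated formula.

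The main obstacle is the identification in step (iii): verifying that $(\beta_\eta^\gamma)^{-1}\widetilde{\nabla}^\eta\beta_\eta^\gamma$ is really the spin lift of $(H_g^{-1/2})^*\nabla^h$, as opposed to some other connection on $\Sigma M_\gamma$ differing by an ungauged term. Once this is granted, everything reduces to the Leibniz calculation displayed above together with the standard spin lift of skew endomorphisms, both of which are essentially mechanical. The identification must be argued at the principal bundle level, exploiting that the natural $\Oof{n}$-connection on $\mathcal{F}V$, for which $b_h^g$ realizes horizontal transport along the affine interpolation $(1-t)g+th$, lifts uniquely to a $\mathrm{Pin}(n)$-connection on $\widetilde{\mathcal{F}}V$ whose horizontal transport is $\beta_\eta^\gamma$ by the preparatory proposition; from there, pullback of the principal connection by a connection-preserving bundle isomorphism is by construction the pullback of the associated covariant derivative on $\Sigma M$, closing the argument.
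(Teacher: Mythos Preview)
Your proposal is correct and follows essentially the same route as the paper. Both arguments (i) use the intertwining property (\ref{betaCliff}) together with the $h$-orthonormal frame $H_g^{-1/2}(e_i)$ to reduce to the gauged spin connection $(\beta_\eta^\gamma)^{-1}\widetilde{\nabla}^\eta\beta_\eta^\gamma$, (ii) identify this as the spin lift of the pulled-back Levi-Civita connection $(H_g^{-1/2})^*\nabla^h$, (iii) compute the difference with $\nabla^g$ by the Leibniz manipulation you wrote down, and (iv) invoke the $\tfrac14$ from the standard $\mathfrak{so}(n)\hookrightarrow\Cliff_n$ lift. The only cosmetic difference is that the paper runs your Leibniz identity backwards (starting from the bracketed expression and simplifying to $\prefix^{g}{\nabla}^{h}_X-\nabla^g_X$), and treats your ``main obstacle'' in step (iii) as an assertion (``As might have been expected this is the lift of the transform of the Levi-Civita connection''), whereas you rightly flag it and sketch the principal-bundle justification via the natural connection and its $\mathrm{Pin}(n)$ lift.
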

\begin{remark}
The formulation here differs from that in \cite{BG}. This is seemingly just a matter of convention, i.e. here we map $u\otimes v\mapsto uv$, while a definition consistent with \cite{BG} would be to map it to $\frac{1}{2}uv$ instead. Note that this ambiguity disappears in the next theorem, which is the result we will apply in the present paper.
\end{remark}
\begin{proof}
Define the transformed spin connection by
\[
\prefix^{\gamma}{\widetilde{\nabla}}^{\eta}_X=(\beta^\gamma_\eta)^{-1}\circ\widetilde{\nabla}^{\eta}_{b_g^h(X)}\circ\beta_\eta^\gamma,\quad X\in\Cinf{TM}.
\]
As might have been expected this is the lift of the transform of the Levi-Civita connection,
defined as
\[
\prefix^{g}{\nabla}^{h}_X=(b^g_h)^{-1}\circ\nabla^{h}_{b_g^h(X)}\circ b_h^g,
\]
which is a $g$-metric connection (by the isometry property), but not
necessarily torsion free. By using the transformed spin connection, however, we get
\[
\Diracpresuffix{\gamma}{\eta}\psi=\big(\beta_\eta^\gamma)^{-1}\bigg(\sum_{i=1}^n b_h^g(e_i)_{\cdot\eta}\widetilde{\nabla}_{e_i}^{\eta}(\beta_\eta^\gamma\psi)\bigg)=\sum_{i=1}^{n}e_{i\cdot\gamma}\prefix^{\gamma}{\widetilde{\nabla}_{H_{g}^{-1/2}(e_i)}}^{\eta}\psi,
\]
by using (\ref{betaCliff}), $b_h^g(e_i)=H_g^{-1/2}(e_i)$ and that by
the isometry $(b_h^g(e_i))$ is an $h$-orthogonal frame, which may be
used for writing the Dirac operator with respect to the spin metric
$\eta$.
\newline\indent{}
Note that in (\ref{Diractrans}) it needs to be explained why and how the
expressions
\[
H^{1/2}\circ\Big(\nabla_X^gH^{-1/2}+\prefix^{g}{A_X}^{h}\circ H^{-1/2}\Big)
\]
define elements in the Clifford algebra. Note that $\prefix^{g}{A_X}^{h}Y$
is tensorial in $Y$, as opposed to the connections themselves. Every $(p,q)$-tensor field $T\in T^p_qM$ is mapped to a Clifford
section as follows: Raise all indices using the metric and apply the
canonical projection in the definition of the Clifford algebra. Note that this is only
injective if the tensor is anti-symmetric (and this is the case here, as follows from being $g$-metric).
\newline\indent{}
It suffices to prove that
\[
\prefix^{\gamma}{\widetilde{\nabla}}^{\eta}_X-\widetilde{\nabla}^\gamma_X
=\frac{1}{4}\Big[H^{1/2}\circ\Big(\nabla_X^gH^{-1/2}+\prefix^{g}{A_X}^{h}\circ H^{-1/2}\Big)\Big]_{\cdot\gamma}.
\]
Note however (see e.g. \cite{BGV} or \cite{LM}) that for two connections
$\nabla^{(1)}$ and $\nabla^{(2)}$
\[
\nabla^{(1)}_Xe_i-\nabla^{(2)}_Xe_i=\sum_{i=1}^n\Big(\omega_{ij}^{(1)}(X)-\omega_{ij}^{(2)}(X)\Big)e_j,
\]
so that
\begin{align}
\Big[\nabla^{(1)}_X-\nabla^{(2)}_X\Big]_{\cdot\gamma}\psi&=2\sum_{i<j}\Big(\omega_{ij}^{(1)}(X)-\omega_{ij}^{(2)}(X)\Big)e_{i\cdot\gamma}e_{j\cdot\gamma}\psi\\
&=4\Big(\widetilde{\nabla}^{(1)}_X-\widetilde{\nabla}^{(2)}_X\Big)\psi,
\end{align}
since again by \cite{BGV}
\begin{equation}\label{spinnablalocal}
\widetilde{\nabla}_X\psi=d\psi(X)-\frac{1}{2}\sum_{i<j}\omega_{ij}(X)e_ie_j\psi,
\end{equation}
viewing $\psi$ in a trivializing neighborhood as a function
\[
\psi:U\to\Complex^{2^k}.
\]
Thus in the case at hand:
\[
\prefix^{\gamma}{\widetilde{\nabla}}^{\eta}_X-\widetilde{\nabla}^\gamma_X
=\frac{1}{4}\Big(\prefix^{g}{\nabla}^{h}_X-\nabla^{g}_X\Big)_{\cdot\gamma}.
\]
Finally by direct computation
\begin{align*}
&H^{1/2}\circ\Big(\nabla_X^gH^{-1/2}+\prefix^{g}{A_X}^{h}\circ
H^{-1/2}\Big)\\
&=\prefix^{g}{\nabla}^{h}_X+H_g^{1/2}\circ\nabla^g_XH_g^{-1/2}-H_g^{1/2}\circ\nabla_X^g\circ
H_g^{-1/2}\\
&=\prefix^{g}{\nabla}^{h}_X+H_g^{1/2}\circ\nabla^g_X\circ
H_g^{-1/2}-H_g^{1/2}\circ H_g^{-1/2}\circ\nabla^g_X-H_g^{1/2}\circ\nabla_X^g\circ
H_g^{-1/2}\\
&=\prefix^{g}{\nabla}^{h}_X-\nabla^{g}_X.
\end{align*}
The second step used the calculus rule
\beq
\nabla_X\circ A=A\circ\nabla_X+\nabla_XA,
\eeq
where $A\in\Cinf{\Endo{TM}}$ is a smooth section of endomorphisms.
\end{proof}
Let $k\in\Cinf{S^2M}$ be a symmetric tensor field, namely a
tangent vector at $g$ in the space of Riemannian metrics on $M$. We now deform
the metric $g_t$ through a smooth curve of metrics, having $k$ as
derivative at $t=0$, for instance $g_t=g+tk$ (e.g. small $t$, $M$ is compact), and find the variation of
the Dirac operator, still for a
fixed topological spin structure.

Before indulging into the proof itself, we mention the following lemma, concerning the
variation of the Levi-Civita connection, which is easily proved used the Koszul formula in a commuting basis of vector fields.
\begin{lemma}\label{nabladotlemma}
The infinitesimal variation of the Levi-Civita connection
corresponding to $g_t$ is the
3-tensor field (raising one index in the ground metric $g_0=g$)
\beq\label{nabladot}
(\nabla^{g_t})'(X,Y,Z)=\frac{1}{2}\Big[(\nabla^g_X)k(Y,Z)+(\nabla^g_Y)k(X,Z)-(\nabla^g_Z)k(X,Y)\Big].
\eeq
\end{lemma}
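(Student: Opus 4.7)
The plan is to differentiate the Koszul formula at $t=0$. Since the difference of two linear connections is a $(1,2)$-tensor field, so is $\frac{d}{dt}|_{t=0}\nabla^{g_t}$, and the identity to be proved is tensorial in $X,Y,Z$. It therefore suffices to verify it at an arbitrary fixed point $p\in M$, with $X,Y,Z$ extended in any convenient way.

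Fix $p$ and choose $g$-geodesic normal coordinates centered at $p$, and take $X,Y,Z$ to be the three corresponding coordinate vector fields. Then all three Lie brackets $[X,Y],[X,Z],[Y,Z]$ vanish identically, and at the point $p$ the Christoffel symbols of $g_0=g$ vanish, so $\nabla^g_XY|_p=\nabla^g_YX|_p=\nabla^g_XZ|_p=\nabla^g_YZ|_p=0$, and a directional derivative of any tensor equals its covariant derivative at $p$. With these choices the Koszul formula for $\nabla^{g_t}$ collapses to
\[
2g_t(\nabla^{g_t}_XY,Z)=Xg_t(Y,Z)+Yg_t(X,Z)-Zg_t(X,Y).
\]

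Differentiating in $t$ at $0$ and using $\frac{d}{dt}|_{t=0}g_t=k$ together with the shorthand $\nabla':=\frac{d}{dt}|_{t=0}\nabla^{g_t}$ yields
\[
2k(\nabla^g_XY,Z)+2g(\nabla'_XY,Z)=Xk(Y,Z)+Yk(X,Z)-Zk(X,Y).
\]
Evaluated at $p$, the first term on the left vanishes by the parallel-at-$p$ property, and the tensor-derivative identity $Xk(Y,Z)=(\nabla^g_Xk)(Y,Z)+k(\nabla^g_XY,Z)+k(Y,\nabla^g_XZ)$ collapses at $p$ to $Xk(Y,Z)|_p=(\nabla^g_Xk)(Y,Z)|_p$, and similarly for the other two terms. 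This gives
\[
2g(\nabla'_XY,Z)|_p=(\nabla^g_Xk)(Y,Z)|_p+(\nabla^g_Yk)(X,Z)|_p-(\nabla^g_Zk)(X,Y)|_p,
\]
which is exactly the claimed formula once one raises the $Z$-index with $g$ to pass from $g(\nabla'_XY,Z)$ to the $(1,2)$-tensor $(\nabla^{g_t})'(X,Y,Z)$. Since $p$ was arbitrary and both sides are tensors, the identity holds globally.

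There is no real obstacle; the only point demanding care is the simultaneous choice of frame that kills both the bracket terms in Koszul and the Christoffel correction terms of $g$ at the chosen point, so that the single scalar equation above already encodes the full tensorial identity after invoking tensoriality of $\nabla'$.
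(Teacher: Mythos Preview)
Your proof is correct and follows essentially the same approach as the paper, which only indicates that the lemma ``is easily proved using the Koszul formula in a commuting basis of vector fields.'' You implement precisely this: coordinate vector fields kill the bracket terms in Koszul, and your additional use of normal coordinates at $p$ is just a convenient shortcut to avoid tracking the Christoffel correction terms that would otherwise cancel by torsion-freeness and symmetry of $k$.
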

Now we can continue the proof of the theorem as follows.
\begin{proof}[Proof of Theorem \ref{varDirac}]
Applying (\ref{Diractrans}) with $H_g=(G_t)_g$ gives
\begin{align*}
&\Diracpresuffix{\gamma}{\gamma_t}\psi=\sum_{i=1}^{n}e_{i\cdot\gamma}\widetilde{\nabla}_{(G_t)_g^{-1/2}(e_i)}^{\gamma}\psi\\&+\frac{1}{4}\sum_{i=1}^{n}e_{i\cdot\gamma}\Big[(G_t)_g^{1/2}\circ\Big(\nabla_{(G_t)_g^{-1/2}(e_i)}^{g}(G_t)_g^{-1/2}+\prefix^{g}{A}_{(G_t)_g^{-1/2}(e_i)}^{g_t}\circ
(G_t)_g^{-1/2}\Big)\Big]_{\cdot\gamma}\psi.
\end{align*}
Everywhere $(e_i)$ denotes a $g$-orthonormal frame, i.e. a fixed one
corresponding to the ground metric. Firstly, we fix the
somewhat abusive notation that if $Q_t$ is any $t$-dependent
object, then $Q_0=Q$ and
\[
({Q}_t)':=\frac{d}{dt}_{\big\vert t=0}Q_t.
\]
To proceed we need a few basic derivatives
\begin{align}
&\Big((G_t)_g^{\pm 1/2}(X)\Big)'=\pm\frac{1}{2} K_g(X),\label{basicone}\\
&\big(\nabla_{X_t}^{g_t}\big)'=\big(\nabla_X^{g_t}\big)'+\nabla_{(X_t)'}^{g}\\
&\big(\nabla_{X_t}^{g}Q_t\big)'=\nabla^{g}_{(X_t)'}Q+\nabla_{X}^{g}(Q_t)'\label{basicthree}
\end{align}
from these and similar, noting also $(G_0)_g^{-1/2}=Id$, we get
\[
\Big(\prefix^{g}{A}_{(G_t)_g^{-1/2}(e_i)}^{g_t}\circ
(G_t)_g^{-1/2}\Big)'=\ldots=\big(\prefix^{g}{A}_{e_i}^{g_t}\big)'=\big(\nabla^{g_t}_{e_i}\big)'.
\]
This last derivative was found in the above Lemma
\ref{nabladotlemma}. We also note that by (\ref{basicone}), (\ref{basicthree}) and $\nabla^g_X\Id=0$, we have
\[
\Big((G_t)_g^{1/2}\circ\Big(\nabla_{(G_t)_g^{-1/2}(X)}^{g}(G_t)_g^{-1/2}\Big)\Big)'(Y,Z)=-\frac{1}{2}\big(\nabla^g_{X}k\big)(Y,Z).
\]
\indent{}The final step is a tensor contraction, which is conveniently carried
out in index notation, using $(e_i)$ and the corresponding fundamental
tensors. Consider the 3-tensor $T(X,Y,Z)=-\nabla_Z^g(X,Y)$ and
calculate, preserving orders of the tensor
products. Here and elsewhere $e_ie_je_k$ means $(e_i)_{\cdot\gamma}(e_j)_{\cdot\gamma}(e_k)_{\cdot\gamma}$.

\begin{align*}
\sum_{i=1}^ne_{i\cdot\gamma}\big[T(e_i,\cdot,\cdot)]_{\cdot\gamma}&=
-\sum_{i,j,k}k_{ij,k}e_ie_je_k=\sum_{i,k}k_{ii,k}e_k-\sum_{\substack{i,j,k\\i\neq j}}k_{ij,k}e_ie_je_k\\
&=\quad\sum_{i,k}k_{ii,k}e_k-0=\big[d(\tr_g{k})\big]_{\cdot\gamma},
\end{align*}
using the Clifford relations for orthonormal bases, as well as the
\emph{symmetry} of the tensor field $k$, i.e. $k_{ij}=k_{ji}$.
\newline\indent{}Letting the 3-tensor $T$ be $T(X,Y,Z)=\nabla_Y^g(X,Z)$
\begin{align*}
\sum_{i=1}^ne_{i\cdot\gamma}\big[T(e_i,\cdot,\cdot)]_{\cdot\gamma}&=
\sum_{i,j,k}k_{ik,j}e_ie_je_k=-\sum_{i,k}k_{ik,i}e_k-\sum_{\substack{i,j,k\\i\neq j}}k_{ik,j}e_je_ie_k\\
&=-\big[\diverg_gk\big]_{\cdot\gamma}-\sum_{\substack{i\neq j\\i\neq
  k}}k_{ik,j}e_je_ie_k+\sum_{i\neq j}k_{ii,j}e_j\\
&=-\big[\diverg_gk\big]_{\cdot\gamma}-\sum_{i\neq
  k}k_{ik,k}e_i-0+\big[d(\tr_gk)\big]_{\cdot\gamma}-\sum_ik_{ii,i}e_i\\
&=\big[d(\tr_g{k})\big]_{\cdot\gamma}-2\big[\diverg_gk\big]_{\cdot\gamma}.
\end{align*}
Adding up these contributions finally proves the theorem.
\end{proof}

\bibliographystyle{amsalpha}

\end{document}